\newcounter{RomanNumber}
\renewcommand{\thesection}{\arabic{section}}
\newtheorem{theorem}{Theorem}[]
\newtheorem{lemma}{Lemma}[section]
\newtheorem{prop}{Proposition}[section]
\theoremstyle{remark}
\newtheorem{remark}{Remark}[section]
\renewcommand{\theequation}{\thesection .\arabic{equation}}
\let\subs\subsection
\renewcommand\subsection{\setcounter{equation}{0}
\gdef\theequation{\thesubsection \arabic{equation}}\subs}
\let\sect\section
\renewcommand\section{\setcounter{equation}{0}
\gdef\theequation{\thesection .\arabic{equation}}\sect}
\newcommand{\Rmnum}[1]{\expandafter\@slowromancap\romannumeral #1@}
\newcommand{\IC}{{\mathbb{C}}}
\newcommand{\IR}{{\mathbb{R}}}
\newcommand{\be}{\begin{eqnarray}}
\newcommand{\ee}{\end{eqnarray}}
\newcommand{\mes}{\mathop{\rm{mes}\, }}
\def\beeq{\begin{equation}}
\def\eneq{\end{equation}}
\def\bm{\begin{matrix}}
\def\endm{\end{matrix}}
\def\Im{{\rm Im}}
\def\Re{{\rm Re}}
\begin{document}

%\tableofcontents
\title[Strong Birkhoff Ergodic Theorem  and  its application]{Strong Birkhoff Ergodic Theorem for subharmonic functions with irrational shift and its application to analytic quasi-periodic cocycles}

\author{Kai Tao}
	\address{College of Sciences, Hohai University, 1 Xikang Road Nanjing Jiangsu 210098 P.R.China}
	\email{ktao@hhu.edu.cn,\ tao.nju@gmail.com}

\thanks{The first author was supported by  the National Nature Science Foundation of China (Grant 11401166).  }

\date{}

\begin{abstract}
In this paper, we first prove  the strong Birkhoff Ergodic Theorem for subharmonic functions with the irrational shift on the Torus. Then, it is applied   to the analytic quasi-periodic  Jacobi cocycles. We show that if the Lyapunov exponent of these cocycles is positive at one point, then it is positive on an interval centered at this point for suitable frequency and coupling numbers.  We also  prove that the Lyapunov exponent is H\"older continuous in $E$ on this interval and calculate the expression of its length. What's more, if the coupling number of the potential is large, then the Lyapunov exponent is always positive for all irrational frequencies and H\"older continuous in $E$ for all finite Liouville frequencies. We also study the Lyapunov exponent of  the Schr\"odinger cocycles, a special case of the Jacobi ones, and obtain its H\"older continuity  in the frequency.
\end{abstract}

\maketitle

\section{Introduction}
By the Birkhoff Ergodic Theorem, if $T:X \to X$ is an ergodic transformation on a measurable space $(X,\Sigma,m)$  and $f$ is an $m-$integrable  function, then the time average functions $f_n(x)=\frac{1}{n}\sum_{k=0}^{n-1}f(T^kx)$ converge to the space average $<f>=\frac{1}{m(X)}\int_X fdm$ for almost every $x\in X$. But it doesn't tell us how fast do they converge? So, we call a theorem the strong Birkhoff Ergodic Theorem, if it gives the convergence rate.

In this paper, we consider the strong Birkhoff Ergodic Theorem for subharmonic functions under the condition that the ergodic transformation is a shift on the Torus, i.e. $T:x\to x+\omega, \forall x \in \mathbb{T}:=[0,1]$. Specifically,  assume that  $u:\Omega\to \IR$ is a subharmonic function on
a domain $\Omega\subset\IC$, $\partial \Omega$ consists
of finitely many piece-wise $C^1$ curves and $\mathbb{T}\varsubsetneq \Omega$. Then, the Reisz Decomposition Theorem  tells us that  there exists a positive
measure $\mu$ on~$\Omega$ such that for any $\Omega_1\Subset \Omega$
(i.e., $\Omega_1$ is a compactly contained subregion of~$\Omega$),
\begin{equation}
\label{eq:rieszrep1} u(z) = \int_{\Omega_1}
\log|z-\zeta|\,d\mu(\zeta) + h(z),
\end{equation}
where $h$ is harmonic on~$\Omega_1$ and $\mu$ is unique with this
property.

In order to formulate our theorem, some notations about the shift $\omega$, which is always irrational in this paper, should be introduced. For any irrational $\omega$, there exist its continued fraction approximants $\{\frac{p_s}{q_s}\}_{s=1}^{\infty}$,  satisfying
\begin{equation}\label{irtor0}
 \frac{1}{q_s(q_{s+1}+q_s)}<|\omega-\frac{ p_s}{q_s}|<\frac{1}{q_sq_{s+1}}.
\end{equation}
Define $\bar\beta$ as the exponential growth exponent of $\{\frac{p_s}{q_s}\}_{s=1}^{\infty}$ as follows:
\[
 \bar\beta(\omega):=\limsup_s \frac{\log q_{s+1}}{q_s}\in[0,\infty].
\]
Thus, if $\omega$ is a finite Liouville frequency, which means $\bar\beta(\omega)<\infty$, then for any $\kappa>0$, there exists $s_0=s_0(\omega,\kappa)\ge 0$ such that for any $s \ge s_0$, $\log q_{s+1}\leq (\bar\beta+\kappa)q_s$. Therefore,   there exists a constant $ \beta(\omega)<\infty$ such that for any $s\ge 0$,
\[ \log q_{s+1}\leq \beta q_s.\]
Then, our strong Birkhoff Ergodic Theorem for  irrational $\omega$ is as follows:
\begin{theorem}\label{erg}
Let $u:\Omega\to \IR$ be a subharmonic function on
a domain $\Omega\subset\IC$. Suppose that $\partial \Omega$ consists
of finitely many piece-wise $C^1$ curves and  $\mathbb{T}$ is contained in $\Omega_1\Subset \Omega$. There exist a constant $C=C(\Omega_1)$ and an absolute constant $c$ such that for any $\delta>0$ and irrational $\omega$, if $ \beta(\omega)<\frac{\delta}{C\mu(\Omega_1)}$, then for any positive $n$,
\begin{equation}\label{diave}
  \mes\left (\left \{x\in \mathbb{T}:|\frac{1}{n}\sum_{k=1}^n u(x+k\omega)-<u>|>\delta \right \}\right )<\exp(-\frac{c}{\mu(\Omega_1)}\delta n),
\end{equation}where $\mu$ is the unique measure defined in (\ref{eq:rieszrep1}).
\end{theorem}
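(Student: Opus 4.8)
The plan is to separate $u$ into its logarithmic potential and its harmonic part via \eqref{eq:rieszrep1}, estimate the harmonic part by Fourier analysis against the Weyl sums $\frac1n\sum_j e(kj\omega)$ (using \eqref{irtor0} to keep the small denominators $\|k\omega\|$ away from $0$), and estimate the logarithmic part by a Cartan--Remez type bound that exploits the excellent separation of the shifted Riesz mass forced by the near-equidistribution of $\{k\omega\}_{k\le n}$. Write $\mu:=\mu(\Omega_1)$, $u_n(x):=\frac1n\sum_{k=1}^n u(x+k\omega)$, $e(t):=e^{2\pi it}$, and normalize $\langle u\rangle=0$. By \eqref{eq:rieszrep1} one has $u=v+h$ on $\Omega_1$ with $v(z)=\int_{\Omega_1}\log|z-\zeta|\,d\mu(\zeta)$ and $h$ harmonic on $\Omega_1$; since $\TT\Subset\Omega_1$, $h$ extends holomorphically to a fixed complex neighbourhood of $\TT$, so $|\hat h(k)|\le C(\Omega_1)e^{-\rho|k|}$ and $\sum_k|\hat h(k)|\le C(\Omega_1)(1+\mu)$ for some $\rho=\rho(\Omega_1)>0$. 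It then suffices to bound $\mes\{|h_n-\hat h(0)|>\delta/2\}$ and $\mes\{|v_n-\langle v\rangle|>\delta/2\}$ each by $\tfrac12\exp(-\tfrac c\mu\delta n)$.

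\emph{The harmonic part.} From $h_n(x)-\hat h(0)=\sum_{k\neq0}\hat h(k)e(kx)K_n(k)$, $K_n(k):=\frac1n\sum_{j=1}^n e(kj\omega)$, and $|K_n(k)|\le\min\{1,(2n\|k\omega\|)^{-1}\}$, I would fix a cutoff $N\asymp\rho^{-1}\log(2\mu/\delta)$, so the tail $\sum_{|k|>N}|\hat h(k)|\le\delta/8$. For $1\le|k|\le N$, \eqref{irtor0} and $\log q_{s+1}\le\beta q_s$ give $\|k\omega\|\ge\min_{1\le m\le N}\|m\omega\|>\tfrac12e^{-\beta N}$, and since $\beta<\delta/(C\mu)$ forces $\beta N\le 1$, this yields $|K_n(k)|\le2/n$; hence the block $\{1\le|k|\le N\}$ contributes at most $2C(\Omega_1)(1+\mu)/n\le\delta/8$ as soon as $n\ge n_0:=16C(\Omega_1)(1+\mu)/\delta$. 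Thus for $n\ge n_0$ the harmonic part is uniformly $\le\delta/2$ and drops out; for $1\le n<n_0$ one has $\exp(-\tfrac c\mu\delta n)\ge\exp(-\tfrac c\mu\delta n_0)$, and this bounded range of $n$ is absorbed by the elementary bound $\|u_n-\langle u\rangle\|_{\BMO(\TT)}\le C(\Omega_1)\mu$ (an average of shifts of $u$ carries the same total Riesz mass) together with John--Nirenberg and the identity $\langle u_n\rangle=\langle u\rangle$, after choosing the absolute constant $c$ small relative to $C(\Omega_1)$.

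\emph{The logarithmic part (where the exponential rate lives).} Write $v_n(x)=\int_{\Omega_1}\Phi_n(x,\zeta)\,d\mu(\zeta)$ with $\Phi_n(x,\zeta)=\frac1n\log\bigl|\prod_{k=1}^n(x+k\omega-\zeta)\bigr|$, the normalized log-modulus of a monic degree-$n$ polynomial in $x$ whose zeros are $\{\zeta-k\omega\}_{k\le n}$. By \eqref{irtor0} and $\log q_{s+1}\le\beta q_s$, if $q_{s_0}\le n<q_{s_0+1}$ then $\min_{1\le m\le n}\|m\omega\|>\tfrac12e^{-\beta n}$, so these zeros are pairwise $\gtrsim e^{-\beta n}$-separated and, within each continued-fraction block, are spaced $\asymp\|q_{s_0}\omega\|$. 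For polynomials with such structured, well-separated zeros one has the refined Cartan bound $\Phi_n(x,\zeta)\ge\langle\log|\cdot-\zeta|\rangle+\tfrac1n\log\mathrm{dist}(x,\{\zeta-k\omega\}_k)-\mathrm{pen}_n$, where the clustering penalty obeys $\mathrm{pen}_n\le C\beta+C/q_{s_0}$; integrating against $\mu$ and using $q_{s_0}\ge\beta^{-1}\log n$ together with $\beta<\delta/(C\mu)$ — which makes $\mu\cdot\mathrm{pen}_n\le\delta/2$ — gives $\{v_n-\langle v\rangle<-\delta\}\subset\{x:\mathrm{dist}(x,\mathrm{supp}\,\nu_n\cap\TT)<e^{-n\delta/(2\mu)}\}$, $\nu_n:=\frac1n\sum_k(\text{translate of }\mu\text{ by }-k\omega)$. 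Since the atomic part of $\mathrm{supp}\,\nu_n\cap\TT$ consists of $\lesssim n\mu$ points (the nonatomic part of $\mu$ contributes only a H\"older term to $v$, handled as in the harmonic case), this exceptional set has measure $\lesssim n\mu\,e^{-n\delta/(2\mu)}<\tfrac12\exp(-c\delta n/\mu)$ for $c$ small and $n$ large, with small $n$ absorbed as before. For the upper tail, $v\le C(\Omega_1)$ on $\Omega_1$, so $v_n$ is subharmonic and bounded above; applying the Denjoy--Koksma inequality to the bounded-variation function $u^+$ over continued-fraction blocks of length $q_{s_0}$ (and using that the orbit comes within $\lesssim1/q_{s_0}$ of every point) yields $v_n(x)\le\langle v\rangle+C\mu/q_{s_0}+o(1)\le\langle v\rangle+\delta/2$ for all $x$ once $n$ is large.

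\emph{Main obstacle.} The technical heart is the continued-fraction bookkeeping in the logarithmic part: establishing the block structure of $\{\zeta-k\omega\}_{k\le n}$, the $e^{-\beta n}$ separation, and above all the estimate $\mathrm{pen}_n\le C\beta+C/q_{s_0}$ for the clustering penalty — this is precisely where $\beta(\omega)<\delta/(C\mu(\Omega_1))$ is consumed, through the quantitative form of \eqref{irtor0} and $\log q_{s+1}\le\beta q_s$ applied up to the full orbit length $n$. A secondary nuisance is the calibration of the absolute constant $c$ against $C(\Omega_1)$ so that the crude John--Nirenberg estimate genuinely covers the bounded ranges of $n$ for the harmonic part and for the upper tail; this is what pins $c$ to be small.
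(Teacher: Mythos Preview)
Your high-level plan (Riesz decomposition, continued-fraction control of the orbit) matches the paper, and your ``refined Cartan bound'' $\Phi_n(x,\zeta)\ge I(\zeta)+\tfrac1n\log\mathrm{dist}(x,\{\zeta-k\omega\}_k)-\mathrm{pen}_n$ with $\mathrm{pen}_n\lesssim\beta$ is essentially what the paper establishes in its Lemmas~2.1--2.3 by comparing the irrational orbit to $\{x+kp_s/q_s\}$ and isolating the single nearest orbit point. The genuine gap is the step \emph{after} that: having integrated against $d\mu(\zeta)$, you conclude that $\{v_n-\langle v\rangle<-\delta\}$ lies in an $e^{-n\delta/(2\mu)}$-neighbourhood of ``$\lesssim n\mu$ points''. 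This count presupposes $\mu$ is purely atomic. For a general Riesz measure $\mathrm{supp}\,\mu$ can be a continuum, so $\bigcup_{k\le n}(\mathrm{supp}\,\mu-k\omega)\cap\TT$ can be all of $\TT$ and the inclusion is vacuous; the patch ``the nonatomic part contributes only a H\"older term to $v$, handled as in the harmonic case'' is not justified and does not reduce to anything like the harmonic estimate. The paper circumvents this by proving, \emph{for each fixed $\zeta$}, an exponential moment bound $\int_0^1\exp\bigl(\sigma|F_{n,\zeta}(x)-nI(\zeta)|\bigr)\,dx<\exp(C\sigma\beta n)$ (via a continued-fraction decomposition $n=l_sq_s+r_s$, $r_s=l_{s-1}q_{s-1}+r_{s-1}$, \dots\ together with the elementary $\int_\TT e^{\sigma|\log\mathrm{dist}(x,\Omega)|}\,dx\le\tfrac{2^\sigma}{1-\sigma}(\sharp\Omega)^\sigma$), and then passes to the $\mu$-integral by a one-line application of Jensen's inequality to the convex function $\exp(\sigma\cdot)$ against the probability measure $d\mu/\mu(\Omega_1)$. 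That Jensen step is exactly where the factor $\mu(\Omega_1)$ enters the exponent, it works for arbitrary $\mu$, and Markov's inequality finishes.

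Two secondary issues. Your John--Nirenberg fallback for the harmonic part does not close when $\delta$ is small: at $n\approx n_0\asymp(1+\mu)/\delta$ the target bound $\exp(-c\delta n_0/\mu)$ is a fixed constant strictly below $1$, while John--Nirenberg on $u_n$ only gives $C_0\exp(-c_0\delta/\mu)\to C_0$ as $\delta\to0$; no choice of the final constant $c$ repairs this. The paper instead treats the harmonic part by the same exponential-moment mechanism (its Remark~2.2). Finally, Denjoy--Koksma is not available for your upper-tail argument: $\log|\cdot-\zeta|$ has infinite total variation near a real $\zeta$, so neither $v$ nor $u$ is of bounded variation on $\TT$.
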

\begin{remark}\label{ergforbeta}
It is obvious that $\beta(\omega)\gtrsim\bar \beta(\omega)$. If we replace the assumption $\beta(\omega)<\frac{\delta}{C\mu(\Omega_1)}$ by $\bar\beta(\omega)<\frac{\delta}{C\mu(\Omega_1)}-\kappa$, then Theorem \ref{erg}  still  holds. But the absolute constant $c$ will depend on $\omega$. See details in Remark \ref{barbeta}.
\end{remark}
\begin{remark}\label{optimal}
In \cite{GS}, Goldstein and Schlag proved that for any strong Diophantine $\omega$, which satisfies the strong Diophantine condition
\begin{equation}\label{13001}
\|n\omega\| \geq \frac{C_{\omega}}{n(\log n)^\alpha}\ \ \mbox{for
all}\ n\not=0,\end{equation}(\ref{diave}) holds when $\delta>\frac{(\log n)^{\alpha+2}}{n}$.Obviously, this $\omega$ satisfies $\bar\beta(\omega)=0$. Thus, we extend their conclusion to the finite Liouville frequency. What's more, \cite{ALSZ} shows that our result is also optimal, as (\ref{diave}) can not hold when $\bar\beta(\omega)=\infty$.
\end{remark}
~\\

In this paper, we apply Theorem \ref{erg} to the following quasi-periodic analytic Jacobi operators $H_{x,\omega}$ on
$l^2(\mathbb{Z})$:
\begin{equation}\label{jacobiequ}
(H_{x,\omega}\phi)(n)=-\lambda_aa(x+(n+1)\omega)\phi(n+1)-\lambda_a\overline{a(x+n\omega)}\phi(n-1)+\lambda_vv(x+n\omega)\phi(n)
,\ n\in\mathbb{Z},\end{equation}where $v:\mathbb{T}\to\mathbb{R}$ is a real analytic function called potential, $a:\mathbb{T}\to\mathbb{C}$ is a complex analytic function and not identically zero, and $\lambda_a$ and $\lambda_v$ are real positive constants  called  coupling numbers. Then, their characteristic equations  $H_{x,\omega}\phi=E\phi$ can be expressed as
\begin{equation}
\left (\begin{array}{cc}
  \phi(n+1) \\ \phi(n) \\
\end{array} \right )=\frac{1}{\lambda_aa(x+(n+1)\omega)}\left ( \begin{array}{cc}
 \lambda_v v(x+n\omega)-E & -\lambda_a\overline{a(x+n\omega)} \\
 \lambda_aa(x+(n+1)\omega)& 0 \\
  \end{array}\right )\left (\begin{array}{cc}
  \phi(n) \\ \phi(n-1) \\
\end{array} \right ).
\end{equation}Define
\begin{equation}\label{eq:1.3}
  M(x,E,\omega):=\frac{1}{\lambda_aa(x+\omega)}\left ( \begin{array}{cc}
 \lambda_v v(x)-E & -\lambda_a\overline{a(x)} \\
 \lambda_aa(x+\omega)& 0 \\
  \end{array}\right )
\end{equation}and  call a map
$$(\omega,M):(x,\vec v)\mapsto (x+\omega,M(x)\vec v)$$ a Jacobi cocycle. Due to the fact that an analytic function only has  finite zeros, $M(x,E,\omega)$ and  the n-step transfer matrix $M_n(x,E,\omega):=\prod_{k=n}^1M(x+k\omega,E)$ make sense almost everywhere. By the Kingman's subadditive ergodic theorem, the Lyapunov exponent
\begin{equation}\label{leomega}
  L(E,\omega)=\lim\limits_{n\to\infty}L_n(E,\omega)=\inf\limits_{n\to\infty}L_n(E,\omega)
\end{equation} always exists, where
\[L_n(E,\omega)=\frac{1}{n}\int\limits_{\mathbb{T}}\log\|M_n(x,E,\omega)\|dx.\]
To apply Theorem \ref{erg} and the Avalanche Principle(Proposition \ref{prop:AP}),  we need to consider the following two matrices associated with $M_n$:
\[
  M_n^a(x,E,\omega)=\left(\prod_{j=1}^n\lambda_aa(x+j\omega)\right)M_n(x,E,\omega)\
\mbox{
and}\
  M^u_n(x,E,\omega)=\frac{M_n(x,E,\omega)}{|\det
M_n(x,E,\omega)|^{\frac{1}{2}}}.
\]
Note that an analytic function $f(x)$ on $\mathbb{T}$ has its complex analytic extension $f(z)$ on the complex strip $\mathbb{T}_{\rho}=\{z:|\Im z|<\rho\}$ and the complex analytic extension of $\bar a(x)$ should be defined on  $\mathbb{T}_{\rho}$ by
\[\tilde a(z):=\overline{a(\frac{1}{z})}.\]
Then, the extensions of $M(x,E,\omega)$, $M_n(x,E,\omega)$, $M_n^a(x,E,\omega)$ and $M_n^u(z,\omega,E)$ are
\begin{eqnarray}
 M(z,E,\omega)&=&\frac{1}{\lambda_aa(z+\omega)}\left ( \begin{array}{cc}
 \lambda_v v(z)-E & -\lambda_a\tilde a(z) \\
 \lambda_aa(z+\omega)& 0 \\
  \end{array}\right ), \  M_n(z,E,\omega)=\prod_{k=n}^1M(z+k\omega,E), \nonumber\\
   M_n^a(z,E,\omega)&=&\left(\prod_{j=1}^n\lambda_aa(z+j\omega)\right)M_n(z,E,\omega) \ \mbox{and} \ M^u_n(z,E,\omega)=\frac{M_n(z,E,\omega)}{|\det
M_n(z,E,\omega)|^{\frac{1}{2}}}.\label{de2}
\end{eqnarray}With fixed $\omega$ and $E$,
\begin{equation}\label{uan}
  u^a_n(z,E,\omega)=\frac{1}{n}\log\|M^a_n(z,E,\omega)\|
\end{equation} is a subharmonic function on $\mathbb{T}_{\rho}$ such that Theorem \ref{erg} can be applied. We also consider the quantities $u_n(z,E,\omega)$, $u_n^u(z,E,\omega)$, $L^a(E,\omega),\ L_n^a(E,\omega),\ L^u(E,\omega)$ and $L_n^u(E,\omega)$ which are defined analogously. Based on the definitions, it is straightforward to check that
\begin{equation}\label{re1}
  \log\|M_n^u(z,E,\omega)\|=-\frac{1}{2}\sum_{j=0}^{n-1}d(z+j\omega,\omega)+\log\|M_n^a(z,E,\omega)\|,
\end{equation}
where
\begin{equation}\label{defd}
  d(z,\omega)=\log|\lambda_a^2a(z+\omega)\tilde a(z)|.
\end{equation} It is also easily seen that $L_n^u(E,\omega)=L_n(\omega,E)\ge 0$, $L^u(E,\omega)=L(E,\omega)\ge 0$ and
\begin{equation}\label{re2}
  L(E,\omega)=L^a(E,\omega)-D,
\end{equation}
where
\[D=\int_{\mathbb{T}}\log|\lambda_aa(x)|dx=\int_{\mathbb{T}}\log|\lambda_a\tilde a(x)|dx=\frac{1}{2}\int_{\mathbb{T}}d(x,\omega)dx.\]
It is well known that $L(E,\omega)$ is a $C^{\infty}$ function of  $E$ on the resolvent set. So we only need to consider $E\in \mathscr{E}$, where
\[
\mathscr{E}:=[-2\lambda_a\|a(x)\|_{L^{\infty}(\mathbb{T})}-\lambda_v\|v(x)\|_{L^{\infty}(\mathbb{T})},\ 2\lambda_a\|a(x)\|_{L^{\infty}(\mathbb{T})}+\lambda_v\|v(x)\|_{L^{\infty}(\mathbb{T})}].
\]
Simple computations yield that for any irrational $\omega$ and $1\leq n\in\mathbb{N}$,
\[
\sup\limits_{E\in\mathscr{E},x\in\mathbb{T}}u^a_n(x,E,\omega)\leq M_0,
\]where
\[
M_0:=\log\left(3\lambda_a\|a\|_{L^{\infty}(\mathbb{T})}+2\lambda_v \|v\|_{L^{\infty}(\mathbb{T})}\right).
\]

If $a(x)\equiv 1$, then the Hamiltonian operators (\ref{jacobiequ}) become the following famous quasi-periodic analytic Sch\"odinger operators $S_{x,\omega}$ on $l^2(\mathbb{Z})$:
\begin{equation}\label{schequ}
(S_{x,\omega}\phi)(n)=\phi(n+1)+\phi(n-1)+\lambda_s v(x+n\omega)\phi(n)
,\ n\in\mathbb{Z}.\end{equation}
Then  the n-step transfer  matrix $M^s_n(x,E,\omega)=\prod_{k=n}^1M^s(x+k\omega,E)$, where
\begin{equation}\label{ms}
  M^s(x,E,\omega)=\left ( \begin{array}{cc}
 \lambda_s v(x)-E & -1 \\
 1& 0 \\
  \end{array}\right ),
\end{equation} has the complex analytic extension $ M^s_n(z,E,\omega)$ on  $\mathbb{T}_{\rho}$, which is analytic and unimodular such that Theorem \ref{erg} and the Avalanche Principle can be applied directly. In 2001, Goldstein and Schlag(\cite{GS}) obtained the conclusion which we state in Remark \ref{optimal} to prove  the Lyapunov exponent of the Schr\"odinger operators
 \begin{equation}\nonumber
  L^s(E,\omega):=\lim\limits_{n\to\infty}L_n^s(E,\omega)=\lim\limits_{n\to\infty}\frac{1}{n}\int\limits_{\mathbb{T}}\log\|M^s_n(x,E,\omega)\|dx
\end{equation}
 is H\"older continuous in $E$ for the strong Diophantine $\omega$. In that reference, they shew that  the keys to prove the continuity of the Lyapunov exponent are  two lemmas: the large deviation theorem (LDT for short) and the Avalanche Principle. We say  an  estimation is a LDT for \begin{equation}\label{uns}
  u^s_n(z,E,\omega)=\frac{1}{n}\log\|M^s_n(z,E,\omega)\|
\end{equation}  if it satisfies
\begin{equation}\label{abldt}\mes\{x:|u^s_n(x,E,\omega)-L^s_n(E,\omega)|> \delta\}<f\left(\delta,n\right).\end{equation}
They proved that $$f\left(\delta,n\right)=\exp\left(-c\delta^2n\right)$$ for strong Diophantine $\omega$, where $c$ depends only on the potential $v(x)$.  In 2002 Bourgain and Jitomirskaya shew in \cite{BJ} that for any irrational $\omega$,
\begin{equation}
  \label{ldtir}f\left(\delta,n\right)=\exp(-c\delta q),
\end{equation} where $0<\delta<1$ and $q$ is the a denominator of $\omega$'s continued fraction approximants $\{\frac{p_s}{q_s}\}_{s=1}^{\infty}$ satisfying $q\lesssim n$.  With the help of (\ref{ldtir}), they proved the joint continuity of $L^s(E,\omega)$ in $(E,\omega)$ at every $(E,\omega_0)$ if $\omega_0$ is irrational. On the other hand, people always think that the H\"older continuity of $L^s(E,\omega)$ in $E$ can not hold when $\bar\beta(\omega)\gtrsim \log\lambda_s$(Recently, Avila et al declared  the proof in the preparation reference  \cite{ALSZ}). So the question whether the H\"older continuity is true or not in the large coupling regime for the finite Liouville frequency $\omega$ is  eagerly anticipated in our field. The first answer  was \cite{YZ} in 2014. They proved that there exist constants $\lambda_0,\ N_0$ which only depend on $v$ and small absolute constants $c'$ and $c''$ such that if $\bar\beta(\omega)<c'$ and $\lambda_s>\lambda_0$, then for  any $n>N_0$
$$f\left(\frac{1}{100}\log\lambda_s,n\right)=\exp\left(-c''n\right)$$
and $L^s(E,\omega)$ is H\"older continuous in $E$. Very recently, that question was finished by Han and Zhang in \cite{HZ}. They proved that if the coupling number $\lambda_s$ is large and  $\bar\beta(\omega)\lesssim \log\lambda_s$, then the sharp LDT, which means
\begin{equation}\label{holderldt}
  f(\delta,n)=\exp(-c\delta n),
\end{equation} and  the H\"older continuity hold.

Compared to the Schr\"odinger cocycle, one of  the  distinguishing features of the Jacobi cocycle is that it is not $SL(2,\mathbb{C})$. It causes that Jitomirskaya and Marx  proved the week H\"older continuity of the Lyapunov exponent of  the analytic $GL(2,\mathbb{C})$ cocycles with Diophantine frequency in \cite{JM}. But in \cite{T}, I shew that the continuity of the Lyapunov exponent of the Jacobi cocycles defined in (\ref{leomega}) can be H\"older in $E$ with the strong Diophantine $\omega$. So far, from a technical perspective,  the strong Birkhoff Ergodic Theorem  is a necessary condition for applying the Avalanche Principle to the analytic quasi-periodic $GL(2,\mathbb{C})$ cocycles. The reason is that the Avalanche Principle can  be applied only to the matrices whose determinants are not larger than $1$.

In this paper, in order to extend the conclusion for the strong Diophantine frequency in  \cite{T} to the one for the finite Liouville frequency, we need to refer to a surprising discovery by Han and Zhang in \cite{HZ} that the  quantity $\mu(\Omega_1)$ in (\ref{eq:rieszrep1}) for the subharmonic function $u_n^s(\cdot,E,\omega)$ defined in (\ref{uns}) does not depend on the large $\lambda_s$. With its help, we apply Theorem \ref{erg} to obtain the sharp LDT (\ref{holderldt}) and the H\"older continuity of $L(E,\omega)$ in $E$ for any finite Liouville $\omega$ when $\lambda_v$ and $|\log\lambda_a|$ are in the large regimes. We also improve the applications of the Avalanche Principle to get the lengths  of the interval $I_E$ where $L(E,\omega)$ satisfies the H\"older condition for $E$. Moreover, if we consider the quasi-periodic analytic Schr\"odinger operators (\ref{schequ}), then we can also obtain the H\"older continuity of the Lyapunov exponent in $\omega$.~\\

Now we begin to state the details of our conclusions.  Choose
\be
  \label{Omega}\Omega=\{z:|\Re z|<1,\ |\Im z|<\rho\}\ee and \be
  \label{Omega1}\Omega_1=\{z:|\Re z|<\frac{2}{3},\ |\Im z|<\frac{\rho}{2}\}
\ee in Theorem \ref{erg}. Then, the LDT for the Jacobi cocycles is as follows:
\begin{theorem}\label{ldt}
There exist $\lambda_0=\lambda_0(v,\lambda_a,a)$ and $c_{v,a}=c_{v,a}(v,a)$ such that for any $\delta>0$, if $\beta(\omega)<c_{v,a}\min(\delta,|D|)$ and $\lambda_v>\lambda_0$, then
\begin{equation}\label{ldtue}\mes\{x:|u_n(x,E,\omega)-L_n(E,\omega)|> \delta\}<\exp(-c_1\delta^2n)+\exp(-c_2\delta n),\ \ \forall n\ge n_1,\end{equation}
where $n_1=n_1(\lambda_a,a,\lambda_v,v)$, $c_1=c_1(\lambda_a,a,\lambda_v,v)$ and $c_2=c_2(\lambda_a,a,\lambda_v,v)$.
\end{theorem}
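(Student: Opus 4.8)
The plan is to split $u_n$ into a piece handled directly by Theorem~\ref{erg} and a piece in which the large-coupling hypothesis supplies the $\lambda_v$-uniform inputs needed to run the Goldstein--Schlag scheme with Theorem~\ref{erg} as its large-deviation engine.

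First I would use (\ref{re1}) together with $L_n(E,\omega)=L_n^u(E,\omega)=L_n^a(E,\omega)-D$ (integrate (\ref{re1}) over $\mathbb{T}$ and note $\langle d(\cdot,\omega)\rangle=2D$) to write, for $x\in\mathbb{T}$,
\begin{equation*}
u_n(x,E,\omega)-L_n(E,\omega)=\bigl(u_n^a(x,E,\omega)-L_n^a(E,\omega)\bigr)-\frac12\Bigl(\frac1n\sum_{j=0}^{n-1}d(x+j\omega,\omega)-2D\Bigr).
\end{equation*}
Then it suffices to estimate the measures of $\{x:|u_n^a-L_n^a|>\delta/2\}$ and of $\{x:|\tfrac1n\sum_j d(\cdot+j\omega,\omega)-2D|>\delta\}$; their sum bounds the left side of (\ref{ldtue}). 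The second set is immediate from Theorem~\ref{erg}: $d(z,\omega)=\log|\lambda_a^2 a(z+\omega)\tilde a(z)|$ is subharmonic on $\Omega$, and multiplying an analytic function by the nonzero constant $\lambda_a^2$ leaves its zeros unchanged, so the Riesz mass of $d(\cdot,\omega)$ on $\Omega_1$ is $2\pi$ times the number of zeros of $a(z+\omega)\tilde a(z)$ in $\Omega_1$, a constant $\mu_d=\mu_d(a)$ free of $\lambda_a$ and $\lambda_v$; applying Theorem~\ref{erg} (after the harmless translation $x\mapsto x-\omega$ turning $\sum_{j=0}^{n-1}$ into $\sum_{k=1}^n$) gives the bound $\exp(-\tfrac{c}{\mu_d}\delta n)$ once $\beta(\omega)<\delta/(C\mu_d)$, which is contained in $\beta(\omega)<c_{v,a}\min(\delta,|D|)$.

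The bulk of the proof is the estimate for $u_n^a$, where $\lambda_v>\lambda_0$ enters twice. First, it forces a $\lambda_v$-uniform bound on the Riesz mass of each $u_m^a(\cdot,E,\omega)$ on $\Omega_1$ --- the Jacobi analogue of the Han--Zhang observation recalled above. Although $\sup_\Omega u_1^a\asymp\log\lambda_v$, the function $u_1^a(z)=\log\|M^a(z,E,\omega)\|$ differs by an absolute $O(1)$ from $\log$ of the norm of the holomorphic map $z\mapsto(\lambda_v v(z)-E,\ \lambda_a\tilde a(z),\ \lambda_a a(z+\omega))$, whose Riesz (first Chern) mass over $\Omega_1$ is the pullback of the Fubini--Study form and is controlled by a degree-type fiber count that the scalings $\lambda_v,\lambda_a$ do not affect and that is uniform in $E\in\mathscr{E}$ (after shrinking $\rho$ so $v(\partial\Omega_1)$ avoids the finitely many critical values of $v|_{\mathbb{T}}$, handled separately); a Jensen comparison transfers this to $u_1^a$ and, by submultiplicativity, to all $u_m^a$, giving $\mu(u_m^a(\cdot,E,\omega),\Omega_1)\le\mu_0(v,a)$. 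Second, it makes $M^a$ diagonally dominant off the bad sets $\{z:|v(z)-E/\lambda_v|<\kappa\}$, so $\log\|M_n^a(x,E,\omega)\|$ equals, up to a remainder, the Birkhoff sum $\sum_{k=1}^n w_\kappa(x+k\omega,E)$ of the truncation $w_\kappa=\max\{\log|\lambda_v v(\cdot)-E|,\ \log(\lambda_a c_a)\}$ at level $\asymp D$, again of Riesz mass $\le\mu_0$ and with mean equal to $L_n^a$ up to a quantity the large-$\lambda_v$ bound controls. I would then run the Goldstein--Schlag machinery: the submultiplicativity bound $u_n^a(x)\le\tfrac1n\sum_{k=1}^n u_1^a(x+k\omega)$ with Theorem~\ref{erg} applied to $u_1^a$ supplies the upper tail at a fixed scale; the Avalanche Principle (Proposition~\ref{prop:AP}) --- applicable because $|\det M_m^u|\equiv1$ and $L(E,\omega)=L^a(E,\omega)-D>0$ for $\lambda_v$ large --- bootstraps this to all scales and furnishes the matching lower tail. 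The remainder of the almost-diagonal comparison (of size $O(\lambda_a^2/\lambda_v^2)$ per step, plus the prefactor $a(z+\omega)/a(z+(n+1)\omega)$) produces the term $\exp(-c_1\delta^2 n)$; the averaged part produces the sharp term $\exp(-c_2\delta n)$; and $n\ge n_1(\lambda_a,a,\lambda_v,v)$ absorbs the fixed thresholds.

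The hard part will be the bookkeeping near the bad sets, uniformly in $E\in\mathscr{E}$. There $M^a$ is not diagonally dominant, and one must keep the truncation level of order $D\asymp\log\lambda_a$ (so the good factors telescope cleanly and $\langle w_\kappa\rangle\approx L_n^a$) while simultaneously keeping the bad sets small and of bounded Riesz-mass complexity (so that the number of bad sites met along an orbit of length $n$ obeys an exponential deviation bound, obtained by applying Theorem~\ref{erg} to the subharmonic function $\max\{\log|\lambda_v v(z)-E|-\log(\lambda_v\kappa),0\}$). Reconciling these two requirements is exactly what forces both the hypothesis $\beta(\omega)<c_{v,a}\min(\delta,|D|)$ and the presence of the weaker $\exp(-c_1\delta^2 n)$ alongside the sharp term; arranging that $c_{v,a}$ and $\mu_0$ depend only on $v$ and $a$, and not on the couplings, is the technical crux, and is precisely what the Fubini--Study/degree computation is designed to secure.
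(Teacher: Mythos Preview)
Your initial decomposition and the treatment of the $d$-part via Theorem~\ref{erg} match the paper (this is Lemma~\ref{ldtd}). But your plan for $u_n^a$ diverges sharply from the paper's argument and introduces unnecessary and problematic machinery.

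The paper's route is much more direct. First, the $\lambda_v$-independent Riesz mass bound for $u_n^a$ comes from the elementary estimate (\ref{21002}), $\mu(\Omega_1)\le C(\Omega,\Omega_1)(\sup_\Omega u_n^a-\sup_{\Omega_1}u_n^a)$: the upper bound $\sup_\Omega u_n^a\le\log(5\lambda_v\|v\|_{L^\infty(\Omega)})$ is immediate, and Lemma~\ref{BG} gives $\sup_{\Omega_1}u_n^a\ge\log(\tfrac12\lambda_v\epsilon_0)$ for $\lambda_v>\lambda_0$, so the difference is $\log(10\|v\|_{L^\infty}/\epsilon_0)$, independent of $\lambda_v$ (Lemma~\ref{muomu}). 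No Fubini--Study or degree argument is needed. Second, with this Riesz mass bound in hand, Theorem~\ref{erg} applies \emph{directly to $u_n^a$ itself} (not to $u_1^a$ or a truncation $w_\kappa$) and controls the Birkhoff average $\tfrac1k\sum_{j=1}^k u_n^a(x+j\omega)$ about $L_n^a$ (Lemma~\ref{erga}). Third, the gap between $u_n^a(x)$ and this $k$-average is handled by an almost-invariance estimate (Corollary~2.3 of \cite{T}): the error is $\le\tfrac{2M_0k}{n}$ plus weighted partial sums of $d(\cdot,\omega)$, and the latter are controlled again by Lemma~\ref{ldtd} (this is Lemma~\ref{deu}, and is where the assumption $\beta<c_{v,a}|D|$ and the threshold $n\ge n_1$ enter). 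Choosing $k\asymp\delta n$ balances the two pieces and produces the $\exp(-c_1\delta^2 n)$ term; the $\exp(-c_2\delta n)$ term comes from the $d$-deviation.

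Your proposal instead approximates $u_n^a$ by a Birkhoff sum of a truncated single-step function $w_\kappa$ via diagonal dominance, and then invokes the Avalanche Principle to bootstrap. This is both unnecessary and problematic here: the Avalanche Principle requires the hypotheses (\ref{large})--(\ref{diff}), which in this setting are established \emph{using} the LDT, so appealing to it inside the LDT proof is circular (indeed the paper uses the Avalanche Principle only in Section~5, after Theorem~\ref{ldt} is proved). Moreover, the truncation bookkeeping near the bad sets that you flag as ``the hard part'' is simply absent from the actual proof---it is replaced entirely by the almost-invariance inequality plus Theorem~\ref{erg} applied to $u_n^a$. A minor point: your displayed decomposition is the identity for $u_n^u-L_n$, not for $u_n-L_n$; the paper passes from Lemma~\ref{ldta} to Theorem~\ref{ldt} via the relation between $M_n$ and $M_n^a$ (eq.~(\ref{de2})) together with Lemma~\ref{ldtd}.
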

\begin{remark}\label{re1-3}
  We can give the expressions of $\lambda_0$ and $c_{v,a}$ as follows:
  \[\lambda_0:=\max\left(\frac{\lambda_a\|a\|_{L^{\infty}(\Omega)}}{\|v\|_{L^{\infty}(\Omega)}},2\lambda_a\|a\|_{L^{\infty}(\Omega)}\epsilon_0^{-1}\right)\ \mbox{and}\  c_{v,a}:=\frac{1}{2CC(\Omega,\Omega_1)C_{v,a}},\]
  where $\epsilon_0$  defined in Lemma \ref{BG} depends only on $v$, $C$ is the  constant from Theorem \ref{erg}, $C(\Omega,\Omega_1)$ is a constant which depends only on $\Omega$ and $\Omega_1$, and $C_{v,a}:=\max\left(\log\frac{10\|v\|_{L^{\infty}(\Omega)}}{\epsilon_0},\log\frac{\|a\|_{L^{\infty}(\Omega)}}{\|a\|_{L^{\infty}(\Omega_1)}}\right)$.
\end{remark}
\begin{remark}\label{schldt}
 Due to the fact  that the Schr\"odinger operator is a special case of the Jacobi one, from now on to the end of this section, after every theorem for the Jacobi operators we will declare the corresponding conclusions for the Schr\"odinger ones in the next remark. The reason we do this is that, like the following LDT, we can get better results for  the Schr\"odinger ones: There exist $c_s=c_s(v):=\frac{1}{2CC_v}$ and $\lambda_0^s=\lambda_0^s(v):=2\epsilon_0^{-1}$ such that for any $\delta>0$, if $\beta(\omega)<c_s\delta$ and $\lambda_s>\lambda_0^s$, then for any positive $n$,
  \[\mes \{x:|u^s_n(x,E,\omega)-L^s_n(E,\omega)|> \delta\}<\exp(-\bar c_s\delta^2n),\]
    where   $\bar c_s=\bar c_s(v,\lambda):=\frac{c}{8M_0^sC_v}$, $C_v:=C(\Omega,\Omega_1)\log\frac{10\|v\|_{L^{\infty}(\Omega)}}{\epsilon_0}$, $C$ and $c$ are the absolute constants from Theorem \ref{erg}, $C(\Omega,\Omega_1)$ and $\epsilon_0$ are from Remark \ref{re1-3} and Lemma \ref{BG} respectively,   and $M_0^s:=\log\left(3+2\lambda_s \|v\|_{L^{\infty}(\mathbb{T})}\right)$.
\end{remark}~\\

If the Lyapunov exponent $L(E,\omega)$ is positive at one point $(E_0,\omega_0)$, then it is also positive on its neighborhood  where we can have  a better LDT, called the sharp one by Bourgain in \cite{B}:
\begin{theorem}\label{sldt}
 Assume $L(E_0,\omega_0)>0$. If $\beta(\omega_0)<c_{v,a}\min\left(\frac{L(E_0,\omega_0)}{15},|D|\right)$ and $\lambda_v>\lambda_0$, where the constants $c_{v,a}$ and $\lambda_0$ are defined in Theorem \ref{ldt}, then there exist $r_E=r_E(\lambda_a,a,\lambda_v,v,E_0,$
 $\omega_0)$  and $r_{\omega}=r_{\omega}(\lambda_a,a,\lambda_v,v,E_0,\omega_0)$ such that for any $|E-E_0|\leq r_E$ and $|\omega-\omega_0|\leq r_{\omega}$,
 \[\frac{3}{4}L(E_0,\omega_0)<L(E,\omega)<\frac{5}{4}L(E_0,\omega_0).\]
 Furthermore,  if $\beta(\omega)<\frac{1}{100}c_{v,a}L(E_0,\omega_0)$, then there exist $\bar c_{v,a}:=\frac{c}{C(\Omega,\Omega_1)C_{v,a}}$ and $\check n=(\lambda_a,a,\lambda_v,v,E_0,\omega_0)$ such that
 \begin{equation}\label{sldte}\mes\{x:|u_n(x,E,\omega)-L_n(E,\omega)|> \frac{1}{20} L(E,\omega)\}<\exp\left(- \frac{1}{12000}\bar c_{v,a}L(E,\omega)n\right),\ \ \forall n\ge \check n,\end{equation}where $c$ is the absolute constant from Theorem \ref{erg}, and  $C(\Omega,\Omega_1)$ and $C_{v,a}$ are from Remark \ref{re1-3}.
\end{theorem}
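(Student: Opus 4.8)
The plan is to carry out the standard large‑deviation/Avalanche‑Principle bootstrap of Goldstein–Schlag type, adapted to the non‑unimodular Jacobi setting through the normalized cocycle $M_n^u$ of (\ref{de2}), for which $|\det M_n^u|\equiv 1$ so that the Avalanche Principle (Proposition \ref{prop:AP}) applies. First, invoke Theorem \ref{ldt} with $\delta=\tfrac1{15}L(E_0,\omega_0)$: the hypotheses $\beta(\omega_0)<c_{v,a}\min\!\bigl(\tfrac1{15}L(E_0,\omega_0),|D|\bigr)$ and $\lambda_v>\lambda_0$ are exactly what make (\ref{ldtue}) hold at $(E_0,\omega_0)$ for every $n\ge n_1$. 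Using this LDT to verify, on a set of $x$ of measure $\ge 1-e^{-cn}$, the hypotheses of the Avalanche Principle for the blocks $M_n^u(x+jn\omega_0)$ — expansion of each block and near‑alignment of consecutive products, with the uniform bound $u_n^a\le M_0$ via (\ref{re1}) controlling the exceptional contribution upon integration — then integrating over $x$ and letting the number of blocks tend to infinity, one obtains the telescoping estimate
\[
  \bigl|\,L(E_0,\omega_0)+L_n(E_0,\omega_0)-2L_{2n}(E_0,\omega_0)\,\bigr|\le C\exp\!\Bigl(-\tfrac{c}{15}L(E_0,\omega_0)\,n\Bigr),\qquad n\ge n_1,
\]
where $L^u=L$, $L_n^u=L_n$; summing over the dyadic scales $2^k n$ yields $\bigl|L_n(E_0,\omega_0)-L(E_0,\omega_0)\bigr|\le C\exp(-\tfrac{c}{15}L(E_0,\omega_0)n)$.

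For the first assertion, fix $n_0\ge n_1$ so large that the last right‑hand side is below $\tfrac18 L(E_0,\omega_0)$. The fixed‑scale quantity $L_{n_0}$ is continuous in $(E,\omega)$, so there are $r_E,r_\omega>0$ with $\bigl|L_{n_0}(E,\omega)-L(E_0,\omega_0)\bigr|<\tfrac14 L(E_0,\omega_0)$ for $|E-E_0|\le r_E$, $|\omega-\omega_0|\le r_\omega$; the upper bound $L(E,\omega)\le L_{n_0}(E,\omega)<\tfrac54 L(E_0,\omega_0)$ is immediate from $L=\inf_n L_n$. For the lower bound one uses joint continuity of $L$ at the irrational point $\omega_0$ — which for analytic quasi‑periodic $GL(2,\IC)$ cocycles is known at irrational frequencies — or, equivalently and more quantitatively, one shrinks $r_\omega$ so that each admissible $\omega$ shares a continued‑fraction denominator $q_s\gg n_0$ with $\omega_0$ and runs a bounded number of Avalanche‑Principle steps at $(E,\omega)$ fed by a $q$‑rate large‑deviation estimate of the type (\ref{ldtir}) at scale $n\asymp q_s$, starting from $L_{n_0}(E,\omega)\approx L(E_0,\omega_0)$; either way $L(E,\omega)>\tfrac34 L(E_0,\omega_0)$.

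For the sharp LDT, restrict further to $\beta(\omega)<\tfrac1{100}c_{v,a}L(E_0,\omega_0)$; then $L(E,\omega)\in\bigl(\tfrac34 L(E_0,\omega_0),\tfrac54 L(E_0,\omega_0)\bigr)$, so $L_n(E,\omega)\ge L(E,\omega)\ge\tfrac34 L(E_0,\omega_0)$ for every $n$ and the arithmetic hypothesis of Theorem \ref{ldt} holds with $\delta$ a fixed small multiple of $L(E,\omega)$. The gain is that the hyperbolicity gap $L_n-\delta$ is now a definite fraction of $L(E,\omega)$, so the Avalanche‑Principle error terms $e^{-(L_n-\delta)n}$ are exponentially small in $L(E,\omega)\,n$, and the inductive step upgrading scale $n$ to scale $2n$ loses only $e^{-cL(E,\omega)n}$ rather than $e^{-c\delta^2 n}$. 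One then proves, by induction on $k$ starting from a base scale $\check n\ge n_1$ on which the convergence $L_{\check n}\to L$ above and the LDT of Theorem \ref{ldt} are in force, that
\[
  \mes\Bigl\{x:\ \bigl|u_n(x,E,\omega)-L_n(E,\omega)\bigr|>\tfrac1{20}L(E,\omega)\Bigr\}<\exp\!\Bigl(-\tfrac1{12000}\bar c_{v,a}\,L(E,\omega)\,n\Bigr)
\]
first along $n=2^k\check n$ and then for all $n\ge\check n$ by comparison with the nearest such scale; the numerical factor $\tfrac1{12000}$ absorbs the several fractions ($\tfrac14,\tfrac15,\tfrac1{20},\dots$) entering the gap conditions and the finitely many comparison inequalities, while $\bar c_{v,a}=c/\bigl(C(\Omega,\Omega_1)C_{v,a}\bigr)$ is the constant produced by a single pass of Theorem \ref{erg} through the subharmonic functions $u_n^a$ and $\log|\lambda_a a|$ and the Birkhoff estimate for the determinant term $\tfrac1{2n}\sum_j d(\cdot+j\omega,\omega)$ in (\ref{re1}).

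The main obstacle, and the source of the two‑sided arithmetic hypothesis, is that the Jacobi cocycle is not in $SL(2,\IC)$: the Avalanche Principle cannot be applied to $M_n$ directly, one must work with the unimodular $M_n^u$ and return to $u_n$ via $u_n^u=u_n^a-\tfrac1{2n}\sum_j d(\cdot+j\omega,\omega)$ and $u_n=u_n^a-\tfrac1n\sum_j\log|\lambda_a a(\cdot+j\omega)|$ from (\ref{re1})--(\ref{re2}). Each such transition costs one application of Theorem \ref{erg} to a subharmonic function, and the $|D|$ in the hypothesis is exactly what keeps the Birkhoff average of $d(\cdot,\omega)$ on the correct side of $2D$, so that the comparison among $\|M_n\|$, $\|M_n^u\|$ and $\|M_n^a\|$ does not degrade; following the Han–Zhang observation quoted above, the Riesz masses of $u_n^a$ and $\log|\lambda_a a|$, hence $c_{v,a}$ and $\bar c_{v,a}$, stay bounded independently of the large $\lambda_v$. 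Keeping every constant explicit — and $\lambda_v$‑independent — through the finitely many Avalanche‑Principle iterations is the bookkeeping‑heavy part; the conceptual content is entirely the LDT/Avalanche‑Principle interplay.
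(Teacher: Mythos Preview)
Your outline for the first assertion --- positivity of $L$ on a neighbourhood of $(E_0,\omega_0)$ --- matches the paper's route: run the weak LDT of Theorem~\ref{ldt} at scale $n$, feed it to the Avalanche Principle as in Lemmas~\ref{62001}--\ref{62002} to obtain $|L+L_n-2L_{2n}|\le e^{-c\delta^2 n}$, then use continuity of the fixed-scale $L_n$ in $E$ (Lemma~\ref{62003}) for $r_E$ and the joint continuity of $L$ from \cite{AJS} for $r_\omega$ (Remark~\ref{t3}). One minor slip: at this stage the telescoping rate is $e^{-c\delta^2 n}$, not $e^{-\frac{c}{15}Ln}$ as you write, since only the $\delta^2$-LDT is available before the sharp estimate has been proved.

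For the sharp LDT, however, your mechanism is different from the paper's and, as stated, has a gap. You propose an Avalanche-Principle induction along scales $2^k\check n$, asserting that ``the inductive step upgrading scale $n$ to scale $2n$ loses only $e^{-cL(E,\omega)n}$''. But the Avalanche Principle does not transport a \emph{pointwise} LDT from one scale to the next: it expresses $\log\|M_N^u\|$ in terms of $\log\|M_n^u\|$ and $\log\|M_{2n}^u\|$ at $m$ shifted points with $N=mn$, and the bad set where the hypotheses fail has measure at best $m$ times the LDT bound at the input scale. If that input bound is $e^{-cLn}$, the output bound at scale $N$ is $\approx e^{-cLn/2}$, which is far from $e^{-cLN}$ since $N\gg n$. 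Thus AP-iteration propagates the \emph{integrated} estimate $|L+L_n-2L_{2n}|$ (this is exactly what Lemmas~\ref{62001}--\ref{62002} do), but it does not by itself sharpen the \emph{measure} bound in the LDT from $e^{-c\delta^2 n}$ to $e^{-c\delta n}$.

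The paper gets the sharp rate by a one-pass argument that hinges on an ingredient absent from your outline: Lemma~\ref{uuupblem}. Furman's uniform upper semicontinuity together with the continuity of $L^a$ from \cite{AJS} gives the \emph{pointwise} bound $u_n^u(x,E,\omega)\le\frac{6}{5}L(E,\omega)$ for all $n>n_5$ and all $x\in\TT$. Via the cocycle identity this upgrades the almost-invariance from $|u_n^u(x{+}k\omega)-u_n^u(x)|\le\frac{2k}{n}M_0$ to $|u_n^u(x{+}k\omega)-u_n^u(x)|\le\frac{12k}{5n}L(E,\omega)$. One may then take the averaging window $k=\frac{\kappa}{4}n$ (proportional to $n$, not to $\delta n/M_0$) and apply the strong Birkhoff theorem (Remark~\ref{ergu}) \emph{once} with $\delta=\frac{\kappa}{4}L$; the resulting exponent is $\bar c_{v,a}\cdot\frac{\kappa}{4}L\cdot\frac{\kappa}{4}n$, linear in $Ln$, which is (\ref{sldtue}) and hence (\ref{sldte}). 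Without the uniform upper bound the window $k$ is forced to be $O(\delta n/M_0)$, and one only recovers the $\delta^2$-rate of Theorem~\ref{ldt}. The transitions among $u_n$, $u_n^u$, $u_n^a$ through (\ref{re1}) and Lemma~\ref{ldtd}, which you do discuss, are handled correctly but are orthogonal to this point.
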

\begin{remark}\label{exp-r}
We apply the LDT (\ref{ldtue}) and the Avalanche Principle, not the continuity of Lyapunov exponent for the complex analytic cocycles with irrational $\omega$ proved in \cite{AJS}, to prove that $L(E,\omega_0)$ is positive  on the interval $[E_0-r_E,E_0+r_E]$. The benefit is that  we can calculate the expression of $r_E$:
\begin{equation}\label{exp-rE}
r_E=\frac{L(E_0,\omega_0)}{200\check n}\exp\big((1-\check n)M_0-2\check n|D|\big),
\end{equation}where $\check n$, which appears in (\ref{sldte}) and (\ref{exp-rE}), is defined in (\ref{checkn}). Due to the definition, it is easily seen that $r_E$ is a continuous function in $E_0$. Thus, if $L(E,\omega_0)$ is positive on $\mathcal{E}\times \{\omega_0\}$, then $r^i_E:=\inf_{E\in\mathcal{E}}r_E$ exists and is positive.
\end{remark}
\begin{remark}
  We can not get the expression of $r_{\omega}$, as it comes from the compactness in $E$ and the joint continuity of $L(E,\omega)$, which was proved in \cite{AJS}.
\end{remark}
\begin{remark}
  The parameters $3/4$, $5/4$ and $1/20$ can be replaced in turn by $1-\kappa_1$,$1+\kappa_2$ and $\kappa_3$, where $0<\kappa_1,\kappa_2,\kappa_3<1$. Then, the new constants $c_3^{\kappa}$ only differs from $c_3$ by a constant multiple of $400\kappa^{-2}$, $r_E(\kappa_1,\kappa_2)$ and $r_{\omega}(\kappa_1,\kappa_2)$ depend on $\kappa_1$ and $\kappa_2$, and $\check n_{\kappa_1,\kappa_2,\kappa_3}$  depends on $\kappa_1,\kappa_2$ and $\kappa_3$.
\end{remark}
\begin{remark}\label{schsldt}
For the Schr\"odinger  operators, we can  calculate the expressions of $r^s_E$ and $r^s_{\omega}$: Assume $L^s(E_0,\omega_0)>0$. If $\beta(\omega_0)<c_{s}\frac{L^s(E_0,\omega_0)}{15}$ and $\lambda_s>\lambda_0^s$, then there exist $\check n_s$ defined in (\ref{checkns}), $r^s_E=r^s_E(\lambda,v,E_0,\omega_0):=\frac{L^s(E_0,\omega_0)}{200\check n_{s}}\exp(-5M_0^s\check n_{s})$ and $r^s_{\omega}=r^s_{\omega}(\lambda,v,E_0,\omega_0):=\frac{L^s(E_0,\omega_0)}{400\max_{\mathbb{T}}(v'(x))\check n_{s}^2}\exp(-5M_0^s\check n_{s})$, where $M_0^s$ comes from Remark \ref{schldt}, such that  for any $|E-E_0|\leq r^s_E$ and  $|\omega-\omega_0|\leq r^s_{\omega}$,
 \[\frac{4}{5}L^s(E_0,\omega_0)<L^s(E,\omega)<\frac{6}{5}L^s(E_0,\omega_0).\]
 Furthermore,  if $\beta(\omega)<\frac{1}{100}c_{s}L(E_0,\omega_0)$, then there exists $c^s_3:=\frac{\bar c_s}{4\times 10^{3}}$ such that
 \begin{equation}\label{ssldte}\mes\{x:|u^s_n(x,E,\omega)-L^s_n(E,\omega)|> \frac{1}{20} L^s(E,\omega)\}<\exp(- c^s_3L^s(E_0,\omega_0)),\ \ \forall n\ge \check n_s.\end{equation}
 Here $c_s$ and $\bar c_s$ are both from Remark \ref{schldt}.
\end{remark}~\\

Due to the positive Lyapunov exponent and the sharp LDT (\ref{sldte}), the Avalanche Principle can be applied again to obtain the following H\"older continuity of Lyapunov exponent:
\begin{theorem}\label{holder}
Assume $L(E_0,\omega_0)>0$, $\beta(\omega_0)<c_{v,a}\min\left(\frac{L(E_0,\omega_0)}{15},|D|\right)$ and $\lambda_v>\lambda_0$, where the constants $c_{v,a}$ and $\lambda_0$ are defined in Theorem \ref{ldt}. There exists $\tau=\tau(v,a):=\frac{\bar c_{v,a}}{2\bar c_{v,a}+8\times 10^5}$, where $\bar c_{v,a}$ is from Theorem \ref{sldt}, such that  for any $E_1,E_2\in [E_0-r_E,E_0+r_E]$ and irrational  $\omega\in [\omega_0- r_{\omega},\omega_0+r_{\omega}]$ satisfying $\beta(\omega)< \frac{c_{v,a}L(E_0,\omega_0)}{100}$, it has
 \[\left|L(E_1,\omega)-L(E_2,\omega)\right|=\left|L^a(E_1,\omega)-L^a(E_2,\omega)\right|\leq \left(|E_1-E_2|\right)^{\tau}.\]
\end{theorem}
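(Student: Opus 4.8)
The plan is to run the Goldstein--Schlag scheme for H\"older continuity, carried out with the normalized matrices $M_n^u$ (the only normalization whose determinant has modulus one, so that Proposition~\ref{prop:AP} applies) and driven by the sharp deviation bound \eqref{sldte}. Since $\int_{\mathbb{T}}\log|\lambda_a a(x+j\omega)|\,dx=D$ for every $j$, we have $L_n^a=L_n+D$ and $L^a=L+D$, which gives the first equality of the statement and reduces everything to estimating $|L^a(E_1,\omega)-L^a(E_2,\omega)|$. By the hypotheses on $E_1,E_2,\omega$, Theorem~\ref{sldt} provides $\tfrac34 L(E_0,\omega_0)<L(E_i,\omega)<\tfrac54 L(E_0,\omega_0)$ and the validity of \eqref{sldte} at $(E_i,\omega)$ for all $n\ge\check n$. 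Writing $u_n^a-L_n^a=(u_n-L_n)+\bigl(\tfrac1n\sum_{j=1}^{n}\log|\lambda_a a(\cdot+j\omega)|-D\bigr)$ and applying Theorem~\ref{erg} to the subharmonic function $\log|\lambda_a a|$ (the constraint on $\beta(\omega)$ being absorbed into $c_{v,a}$ through $\mu(\Omega_1)$), the estimate \eqref{sldte} upgrades to
\[
\mes\bigl\{x:|u_n^a(x,E,\omega)-L_n^a(E,\omega)|>\tfrac1{10}L(E,\omega)\bigr\}<\exp\bigl(-c_\ast L(E,\omega)\,n\bigr),\qquad n\ge\check n ,
\]
with $c_\ast$ a fixed multiple of $\bar c_{v,a}$; by \eqref{re1} the same holds for $u_n^u$, which is what the Avalanche Principle sees.

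First I apply Proposition~\ref{prop:AP} to the blocks $M_n^u(x+(j-1)n\omega)$, $1\le j\le m$, whose product is $M_{mn}^u(x)$ (the splitting $M_{2n}^u(x)=M_n^u(x+n\omega)M_n^u(x)$ is exact, the determinant being shared evenly). Controlling the exceptional set --- where some block or some consecutive pair fails to be hyperbolic --- by the displayed LDT, so that it has measure $<Cm\exp(-c_\ast Ln)$, and optimizing the number of blocks $m\asymp\exp(c_\ast Ln/2)$, one obtains, exactly as in \cite{GS} and \cite{T}, the rate of convergence
\[
|L_n^a(E,\omega)-L^a(E,\omega)|\le C M_0\exp\bigl(-c_\ast L(E,\omega)\,n\bigr),\qquad n\ge\check n .
\]
(Equivalently one keeps the Avalanche identity in the form $|L^a-(2L_{2n}^a-L_n^a)|\le CM_0 e^{-c_\ast Ln}$ and invokes the elementary polynomial convergence $L_N\to L$ at the huge scale $N=mn$.)

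Next comes comparison in $E$ at a fixed scale $n$. On the good set both $\|M_n^a(x,E_i,\omega)\|\ge e^{n(L^a(E_i,\omega)-L(E_i,\omega)/10)}$, while the telescoping expansion of $\partial_E M_n^a$ together with the uniform bound $\|M_n^a\|\le e^{nM_0}$ gives $\|M_n^a(\cdot,E_1,\omega)-M_n^a(\cdot,E_2,\omega)\|\le |E_1-E_2|\,n\,e^{nM_0'}$ with $M_0'=M_0+O(1)$; hence, using $|\log a-\log b|\le |a-b|/\min(a,b)$ and the crude bound $|u_n^a|\le M_0$ on the complementary set,
\[
|L_n^a(E_1,\omega)-L_n^a(E_2,\omega)|\le |E_1-E_2|\,e^{nB}+2M_0\,e^{-c_\ast L(E_0,\omega_0)n},\qquad B:=M_0'-L^a+\tfrac1{10}L .
\]
Combining this with the convergence rate and choosing $n\asymp\log\frac1{|E_1-E_2|}$ so that both $e^{-c_\ast L(E_0,\omega_0)n}$ and $|E_1-E_2|e^{nB}$ are $\le|E_1-E_2|^\tau$ forces $\tau\le\frac{c_\ast L(E_0,\omega_0)}{B+c_\ast L(E_0,\omega_0)}$. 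The crucial point is that, because $\lambda_v>\lambda_0$, the coupling-uniform estimates underlying Theorems~\ref{ldt}--\ref{sldt} force $B\le C_{v,a}\,L(E_0,\omega_0)$; thus both $c_\ast L(E_0,\omega_0)$ and $B$ are fixed multiples of $L(E_0,\omega_0)$, the factor $L(E_0,\omega_0)$ cancels, and $\tau$ depends only on $v,a$. Tracking the constants ($c_\ast\propto\bar c_{v,a}$ from \eqref{sldte}, $B$ built from $M_0$, $|D|$ and the deviation gap in \eqref{sldte}) yields precisely $\tau=\frac{\bar c_{v,a}}{2\bar c_{v,a}+8\times10^5}$.

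The two delicate points are the following. (i) Running the Avalanche Principle in the $GL(2,\mathbb{C})$ setting: one must work with $M_n^u$ and make the exceptional set small for \emph{all} $m\asymp e^{c_\ast Ln/2}$ blocks simultaneously; this is exactly where the strong Birkhoff Ergodic Theorem of Theorem~\ref{erg}, valid for finite Liouville frequencies, is indispensable --- the classical large deviation estimate \eqref{ldtir} would allow only polynomially many blocks and yield merely a $1/\log$-modulus of continuity. (ii) The uniformity of $\tau$: one must verify, using $\lambda_v>\lambda_0$ and the explicit form of $\lambda_0$ from Remark~\ref{re1-3}, that the growth exponent $B$ is dominated by a constant multiple of $L(E_0,\omega_0)$ --- the Jacobi counterpart of the Han--Zhang observation that the relevant subharmonic mass does not grow with the coupling --- so that the $L(E_0,\omega_0)$-dependence disappears from the final H\"older exponent.
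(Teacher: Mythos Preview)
Your overall architecture is right and matches the paper: work with $M_n^u$, feed the sharp LDT into the Avalanche Principle to get $|L+L_n-2L_{2n}|\le\exp(-c_\ast L(E_0,\omega_0)n)$, compare $L_n(E_1,\omega)$ with $L_n(E_2,\omega)$ by telescoping $M_n^a$, and balance. The gap is in your ``crucial point.''

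You write $B=M_0'-L^a+\tfrac1{10}L$ and then assert that $\lambda_v>\lambda_0$ forces $B\le C_{v,a}\,L(E_0,\omega_0)$. That inference is false. The only thing $\lambda_v>\lambda_0$ buys (via Lemma~\ref{muomu}) is a coupling-free bound on the Riesz mass $\mu(\Omega_1)$, hence on the LDT constants; it says nothing about the size of $M_0$ relative to $L(E_0,\omega_0)$. In the setting of Theorem~\ref{holder} the number $L(E_0,\omega_0)$ can be arbitrarily small compared with $M_0$, so with the crude bound $\|M_k^a\|\le e^{kM_0}$ in the telescoping sum your growth exponent $B$ is genuinely of order $M_0$, the ratio $c_\ast L(E_0,\omega_0)/B$ does \emph{not} reduce to a constant, and the H\"older exponent you produce depends on $L(E_0,\omega_0)/M_0$. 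The Han--Zhang observation you invoke concerns the subharmonic mass, not a comparison $M_0\lesssim L$.

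The paper fixes exactly this point, and it is the only substantive difference from your sketch. Instead of the crude upper bound it invokes Lemma~\ref{uuupblem} (a Furman-type uniform upper semicontinuity together with compactness in $E$) to get $u_k^u(x,E,\omega)\le\tfrac65 L(E,\omega)$ for $k>n_5$. In the telescoping estimate for $\log\|M_n^u(\cdot,E_1)\|-\log\|M_n^u(\cdot,E_2)\|$ one then splits
\[
\sum_{j=0}^{n-1}=\sum_{j<n_5}+\sum_{n_5\le j\le n-n_5}+\sum_{j>n-n_5},
\]
uses $e^{M_0 k}$ only on the two edge pieces of fixed length $n_5$, and uses $e^{\frac65 L(E_i,\omega)k}$ on the bulk; the determinant factor $\exp(-\tfrac12\sum d(x+j\omega))$ is controlled by Lemma~\ref{ldtd}. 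This replaces your $B\sim M_0$ by a growth exponent $\sim 2L(E_0,\omega_0)$, after which the $L(E_0,\omega_0)$'s cancel and one lands on $\tau=\bar c_{v,a}/(2\bar c_{v,a}+8\times10^5)$. Your argument recovers the stated $\tau$ only in the large-coupling regime of Theorem~\ref{posile}, where indeed $M_0\lesssim L$; for Theorem~\ref{holder} as stated you need the uniform pointwise upper bound of Lemma~\ref{uuupblem}.
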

\begin{remark}\label{schholder}
For the Schr\"odinger cocycles, we can prove the  H\"older continuity  in $\omega$ as follows:
  Assume $L^s(E_0,\omega_0)>0$, $\beta(\omega_0)<\frac{c_sL^s(E_0,\omega_0)}{15}$ and $\lambda_s>\lambda_0^s$. There exists $\tau_s=\tau_s(v):=\frac{\bar c_s}{2\bar c_s+8\times 10^5}$ such that  for any $E_1,E_2\in [E_0-r_E^s,E_0+r_E^s]$ and $\omega_1,\omega_2\in [\omega_0-r^s_{\omega},\omega_0+r^s_{\omega}]$ satisfying $\max\left(\beta(\omega_1),\beta(\omega_2)\right)< \frac{c_sL^s(E_0,\omega_0)}{100}$,  it has
 \[\left|L^s(E_1,\omega_1)-L^s(E_2,\omega_2)\right|\leq |E_1-E_2|^{\tau_s}+|\omega_1-\omega_2|^{\tau_s}.\]
 Here the constants $c_s$, $\bar c_s$ and $\lambda_0^s$ come from Remark \ref{schldt}.
\end{remark}~\\

It is well-known that Screts-Spencer \cite{SS} gave a lower bound of the Lyapunov exponents of the Schr\"odinger operators in the large coupling regime. We  prove that the  similar result for the Jacobi ones also holds:
\begin{theorem}\label{posile}
For any $0<\gamma<1$, $E\in \mathscr{E}$ and irrational $\omega$, there exists $\lambda_p=\lambda_p(\lambda_a,a,v,\gamma)$ such that if $\lambda_v>\lambda_p$, then
\[L(E,\omega)>(1-\gamma)\log \lambda_v.\]
\end{theorem}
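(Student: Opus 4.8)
The plan is to follow the Sorets–Spencer strategy \cite{SS}: reduce to a lower bound for $L^{a}(E,\omega)$ and prove it by showing that the transfer cocycle is $\lambda_{v}$-expanding along the $x$-average of every orbit. We assume $v$ non-constant (necessary for the statement) and record that $\mathscr{E}\subset\{|E|\le C_{1}\lambda_{v}\}$ for some $C_{1}=C_{1}(\lambda_{a},a,v)$. By \eqref{re2}, $L(E,\omega)=L^{a}(E,\omega)-D$ with $D=\int_{\mathbb{T}}\log|\lambda_{a}a(x)|\,dx$ a constant depending only on $\lambda_{a},a$; setting $B(x):=\lambda_{a}a(x+\omega)M(x,E,\omega)=\left(\begin{smallmatrix}\lambda_{v}v(x)-E&-\lambda_{a}\tilde a(x)\\ \lambda_{a}a(x+\omega)&0\end{smallmatrix}\right)$ and telescoping the $a$-factors gives $M^{a}_{n}(z,E,\omega)=\frac{a(z+\omega)}{a(z+(n+1)\omega)}\prod_{k=n}^{1}B(z+k\omega)$, whence (the scalar prefactor has zero $\log$-average over $\mathbb{T}$)
\[
L^{a}_{n}(E,\omega)=\frac1n\int_{\mathbb{T}}\log\Big\|\prod_{k=n}^{1}B(x+k\omega)\Big\|\,dx .
\]
So it is enough to find $C_{0},\lambda_{\ast}$ (depending only on $\lambda_{a},a,v$) with $L^{a}(E,\omega)\ge\log\lambda_{v}-C_{0}$ for all irrational $\omega$ and all $E\in\mathscr{E}$ once $\lambda_{v}>\lambda_{\ast}$; indeed then $L(E,\omega)\ge\log\lambda_{v}-C_{0}-D>(1-\gamma)\log\lambda_{v}$ as soon as $\lambda_{v}>\lambda_{p}:=\max(\lambda_{\ast},\exp((C_{0}+D)/\gamma))$.

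Since $v$ is real-analytic and non-constant there are $C_{v},\alpha_{v}>0$ with $\mes\{x:|v(x)-c|<\epsilon\}\le C_{v}\epsilon^{\alpha_{v}}$ for all $c\in\IR$, $\epsilon>0$. Fix $K=K(\lambda_{a},a)$ large (say $K\ge 10\,\lambda_{a}\|a\|_{L^{\infty}(\mathbb{T})}$), put $\epsilon_{0}:=K/\lambda_{v}$ and let $\mathcal{B}:=\{x\in\mathbb{T}:|v(x)-E/\lambda_{v}|<\epsilon_{0}\}$, so $\mes(\mathcal{B})\le\eta:=C_{v}(K/\lambda_{v})^{\alpha_{v}}$, and $\eta$, $\eta\log\lambda_{v}\to0$ as $\lambda_{v}\to\infty$. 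For $x\notin\mathcal{B}$ one has $|B_{11}(x)|=|\lambda_{v}v(x)-E|\ge K\gg\lambda_{a}\|a\|_{L^{\infty}}\ge\max(|B_{12}(x)|,|B_{21}(x)|)$, so there is a fixed cone $\mathcal{C}\ni e_{1}$ with $B(x)\mathcal{C}\subset\mathcal{C}$ and $\|B(x)w\|\ge\frac12|\lambda_{v}v(x)-E|\,\|w\|$ on $\mathcal{C}$ for $x\notin\mathcal{B}$, and the projective action of such $B(x)$ is a uniform contraction, so an orbit re-enters $\mathcal{C}$ within a bounded number $\kappa_{0}=\kappa_{0}(\lambda_{a},a)$ of consecutive steps outside $\mathcal{B}$. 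Here is the one place the Jacobi structure intervenes: on $\mathbb{T}\setminus\mathcal{B}$ the analytic off-diagonal weight $\lambda_{a}a$ is merely a bounded perturbation of the diagonal-dominant regime and does not affect the cone, but $\det B$ is not unimodular, so across steps in $\mathcal{B}$ one must carry $\|B(x)^{-1}\|\le\|B(x)\|/|\lambda_{a}^{2}\tilde a(x)a(x+\omega)|$.

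Applying $\prod_{k=n}^{1}B(x+k\omega)$ to $e_{1}$ and following $x+\omega,\dots,x+n\omega$: over maximal runs in $\mathbb{T}\setminus\mathcal{B}$ the vector stays in $\mathcal{C}$ and is multiplied by $\ge\prod\frac12|\lambda_{v}v(x+k\omega)-E|$, while the visits to $\mathcal{B}$ — at most $N(x):=\#\{1\le k\le n:x+k\omega\in\mathcal{B}\}$ of them — cost, over the $O(N(x))$ affected indices, at most $(\kappa_{0}+1)N(x)\log(C_{1}\lambda_{v})-\sum_{k}\log|\lambda_{a}^{2}\tilde a(x+k\omega)a(x+(k+1)\omega)|$. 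Integrating in $x$ (which dispenses with any arithmetic condition on $\omega$), using $\int_{\mathbb{T}}N(x)\,dx=n\,\mes(\mathcal{B})\le n\eta$ and $\int_{\mathbb{T}}\sum_{k=1}^{n}\log|v(x+k\omega)-E/\lambda_{v}|\,dx=n\int_{\mathbb{T}}\log|v(y)-E/\lambda_{v}|\,dy$, and bounding the $\mathcal{B}$-restricted integrals of $\log|v-E/\lambda_{v}|$ and of $\log|\lambda_{a}^{2}\tilde a\,a(\cdot+\omega)|$ by $\mes(\mathcal{B})^{1/2}\cdot C$ via Cauchy–Schwarz and the uniform bounds $\|\log|v-c|\|_{L^{2}(\mathbb{T})}\le C_{v}$ (for $|c|\le\|v\|_{L^{\infty}}+1$) and $\|\log|\lambda_{a}^{2}\tilde a\,a(\cdot+\omega)|\|_{L^{2}(\mathbb{T})}\le C_{a,\lambda_{a}}$ (the latter $\omega$-independent, by translation invariance), one obtains
\[
L^{a}_{n}(E,\omega)\ge\log\lambda_{v}+\int_{\mathbb{T}}\log|v(y)-E/\lambda_{v}|\,dy-\log2-C(\eta\log\lambda_{v}+\eta^{1/2}),
\]
uniformly in $n$, in irrational $\omega$, and in $E\in\mathscr{E}$, with $C=C(\lambda_{a},a,v)$. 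As $c\mapsto\int_{\mathbb{T}}\log|v(y)-c|\,dy$ is finite and continuous (in fact subharmonic) on the compact set $\{|c|\le\|v\|_{L^{\infty}}+1\}$, it is bounded below there by some $-C_{v}'$; letting $n\to\infty$ and taking $\lambda_{v}$ large enough that $C(\eta\log\lambda_{v}+\eta^{1/2})\le1$ gives $L^{a}(E,\omega)\ge\log\lambda_{v}-C_{0}$ with $C_{0}=C_{v}'+\log2+1$, which is exactly what the first step needs.

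The main obstacle is the orbit bookkeeping in the last step: turning ``bounded recovery time after each visit to $\mathcal{B}$'' into a genuinely uniform-in-$x$ statement — which in the non-unimodular $GL(2,\IC)$ setting means one must simultaneously keep the loss $\|B(x)^{-1}\|$ near the zeros of $a$ under control — and keeping every constant independent of $E$ although the interval $\mathscr{E}$, hence the range of $E/\lambda_{v}$, moves with $\lambda_{v}$. It is precisely in order to be uniform over all irrational $\omega$, with no Diophantine hypothesis, that one integrates in $x$ rather than invoking unique ergodicity for the $\log$-singular functions $\log|v-E/\lambda_{v}|$ and $\log|a|$.
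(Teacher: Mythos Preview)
Your proposal is not the Sorets--Spencer strategy, and the gap you yourself flag in the closing paragraph is real and fatal as written. The claim that the projective action of a good $B(x)$ is a ``uniform contraction'' so that the vector re-enters $\mathcal{C}$ after $\kappa_{0}=\kappa_{0}(\lambda_{a},a)$ good steps is false: each good $B(x)$ is hyperbolic on $\mathbb{CP}^{1}$ with attractor near $e_{1}$ but also a \emph{repeller} at $r^{*}(x)\approx(\lambda_{v}v(x)-E)/(\lambda_{a}\tilde a(x))$, and if the direction after a bad step lands near $r^{*}(x+(j{+}1)\omega)$ the next good matrix \emph{contracts} the norm (by up to $\|B^{-1}\|$, not merely fails to expand) and leaves the direction near that repeller. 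Whether the orbit then escapes depends on how fast the moving repeller $r^{*}(x+k\omega)$ drifts, i.e.\ on $\|\omega\|$; for $\omega$ close to a rational the repellers at consecutive steps are nearly equal and recovery can take arbitrarily long, so no uniform $\kappa_{0}$ exists. Integrating in $x$ does not rescue this, because you are integrating a \emph{pointwise} lower bound that is simply false on the (possibly not small, and with cost growing like $\lambda_{v}^{\text{recovery time}}$) set of $x$ where the direction after a bad visit shadows the repeller.

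The paper --- and the actual Sorets--Spencer argument --- bypasses all of this by complexification. Lemma~\ref{BG} gives a height $y_{0}$ with $\inf_{x}|v(x+iy_{0})-E/\lambda_{v}|>\epsilon_{0}$, so on the line $\Im z=y_{0}$ \emph{every} step is good and the cone induction $|g_{n}|\ge(\tfrac12\lambda_{v}\epsilon_{0})^{n}$ runs with no bad set at all, giving $u_{n}^{a}(x+iy_{0})\ge\log(\tfrac12\lambda_{v}\epsilon_{0})$ for every $x$. The transfer from $\Im z=y_{0}$ down to $\mathbb{T}$ is then done by subharmonicity: one bounds $u_{n}^{a}(x+iy_{0})$ from above by its harmonic-measure average over the boundary of a strip, isolates the $y=0$ contribution, and averages over horizontal translates to obtain $L_{n}^{a}\ge(1-\gamma)\log\lambda_{v}$. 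That passage to a complex line where the potential is uniformly large is precisely the idea that replaces your uncontrolled real-line bookkeeping.
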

\begin{remark}
  The setting of $\lambda_p$ can be found in (\ref{lambda0}), which is a nondecreasing function of $\lambda_a$. Moreover, if $\lambda_v>\lambda_p$, then
  \[\left(1-\frac{\gamma}{2}\right)\log\lambda_v<L^a(E,\omega)<\left(1+\frac{\gamma}{2}\right)\log\lambda_v\ \mbox{and}\ D<\frac{\gamma}{2}\log\lambda_v.\]
  Indeed, the proof of this theorem is  obtained directly from the above two properties and (\ref{re2}).
\end{remark}
\begin{remark}
 Of course, we can also give the upper bound of $L(E,\omega)$, although it is useless in this paper. By (\ref{re2}),  we need to show that $D>-\frac{\gamma}{2}\log\lambda_v$. Define $D_a:=\exp\left(-\int_{\mathbb{T}}\log|a(x)|dx\right)$. Then, if $\lambda_v>\lambda_a^{-\frac{2}{\gamma}}D_a^{-1}$, which is a decreasing function of $\lambda_a$, then
 \[L(E,\omega)<(1+\gamma)\log\lambda_v.\]
\end{remark}

Thus, when $\lambda_v>\lambda_p$, due to Theorem \ref{holder}, we have that for any irrational $\omega$, if $\beta(\omega)<c_{v,a}\min\left(\frac{1-\gamma}{15}\log\lambda_v,|D|\right)$, then the H\"older continuity holds. Note  that  $D<\frac{\gamma}{2}\log\lambda_v$ and $L(E,\omega)>(1-\gamma)\log\lambda_v$ at this time. Thus, if $D$ is positive, then $|D|<\frac{1}{15}L(E,\omega)$ with $\gamma=\frac{1}{10}$. Conversely, if $D$ is negative, then we need $\beta(\omega)<-c_{v,a}D$ and $\lambda_v>\max(\lambda_p,\lambda_{\omega})$, where $\lambda_{\omega}:=\exp\left(\frac{15\beta(\omega)}{c_{v,a}(1-\gamma)}\right)$. Obviously, due to the definitions, $\lambda_p$ is a decreasing function of $\gamma$ with $\lim_{\gamma\to 0}\lambda_p\to\infty$, and $\lambda_{\omega}$ is an increasing function of $\gamma$ with $\lim_{\gamma\to1}\lambda_{\omega}\to\infty$. Therefore, there exists $0<\gamma_0<1$ such that $\lambda_p(\gamma_0)=\lambda_{\omega}(\gamma_0)$ and $\max(\lambda_p(\gamma_0),\lambda_{\omega}(\gamma_0))\leq \max(\lambda_p(\gamma),\lambda_{\omega}(\gamma))$ for any $0<\gamma<1$.

In summary, we prove that for any finite Liouville $\omega$, the H\"older continuity of Lyapunov exponent in $E$ holds with suitable $\lambda_a$ and $\lambda_v$ as follows:
\begin{theorem}\label{larholder}
 For  any finite Liouville $\omega$, if $\lambda_a>D_ae^{\frac{\beta(\omega)}{c_{v,a}}}$ and $\lambda_v>\lambda_p(\lambda_a,a,v,\frac{1}{10})$, or $\lambda_a<D_ae^{-\frac{\beta(\omega)}{c_{v,a}}}$ and $\lambda_v>\exp\left(\frac{15\beta(\omega)}{c_{v,a}(1-\gamma_0)}\right)$, then for any $|E_1-E_2|<r^i_E$,
\[\left|L(E_1,\omega)-L(E_2,\omega)\right|=\left|L^a(E_1,\omega)-L^a(E_2,\omega)\right|\leq \left(|E_1-E_2|\right)^{\tau},\]
where $r_E^i$ comes from Remark \ref{exp-r}, $c_{v,a}$ is from Remark \ref{re1-3} and $\tau$ is defined in Theorem \ref{holder}.
\end{theorem}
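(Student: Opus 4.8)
The plan is to deduce Theorem~\ref{larholder} from Theorem~\ref{posile} and Theorem~\ref{holder}: no new estimate is needed, and the work consists in choosing the free exponent $\gamma$ in Theorem~\ref{posile} so that, for $\omega_0=\omega$ and every $E_0\in\mathscr{E}$, the hypotheses of Theorem~\ref{holder} hold, and then in patching the resulting local H\"older bounds via Remark~\ref{exp-r}. The first observation I would make is the identity
\[
D=\int_{\mathbb{T}}\log|\lambda_a a(x)|\,dx=\log\lambda_a+\int_{\mathbb{T}}\log|a(x)|\,dx=\log\frac{\lambda_a}{D_a},
\]
so that $\lambda_a>D_a e^{\beta(\omega)/c_{v,a}}$ says precisely $D>\beta(\omega)/c_{v,a}>0$, while $\lambda_a<D_a e^{-\beta(\omega)/c_{v,a}}$ says precisely $-D>\beta(\omega)/c_{v,a}>0$; in either case $\beta(\omega)<c_{v,a}|D|$, which is the $|D|$-part of the smallness hypothesis of Theorem~\ref{holder}.

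For the first alternative ($D>0$) I would take $\gamma=\tfrac1{10}$. Since $\lambda_v>\lambda_p(\lambda_a,a,v,\tfrac1{10})$ (and $\lambda_p\ge\lambda_0$), Theorem~\ref{posile} and the remark following it give $L(E,\omega)>\tfrac9{10}\log\lambda_v>0$ and $0<D<\tfrac1{20}\log\lambda_v$ for all $E\in\mathscr{E}$ and all irrational $\omega$; hence $|D|<\tfrac1{15}L(E,\omega)$, so $\min(L(E,\omega)/15,|D|)=|D|$ and the inequality $\beta(\omega)<c_{v,a}|D|$ above is exactly the smallness hypothesis of Theorem~\ref{holder}. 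For the second alternative ($D\le0$) I would take $\gamma=\gamma_0$, the crossover value fixed before the statement by $\lambda_p(\gamma_0)=\lambda_\omega(\gamma_0)$, with $\lambda_\omega(\gamma):=\exp\!\big(15\beta(\omega)/(c_{v,a}(1-\gamma))\big)$. Then $\lambda_v>\exp\!\big(15\beta(\omega)/(c_{v,a}(1-\gamma_0))\big)=\lambda_\omega(\gamma_0)=\lambda_p(\gamma_0)$ does double duty: via $\lambda_v>\lambda_p(\lambda_a,a,v,\gamma_0)$ it gives $L(E,\omega)>(1-\gamma_0)\log\lambda_v>0$ for all $E\in\mathscr{E}$, and via $\lambda_v>\lambda_\omega(\gamma_0)$ it gives $\beta(\omega)<c_{v,a}\tfrac{1-\gamma_0}{15}\log\lambda_v<c_{v,a}L(E,\omega)/15$; together with $\beta(\omega)<c_{v,a}|D|$ the hypotheses of Theorem~\ref{holder} with $\omega_0=\omega$ are again met at every $E_0\in\mathscr{E}$.

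In either alternative Theorem~\ref{holder} provides, for each $E_0\in\mathscr{E}$, a radius $r_E(E_0)>0$ and the exponent $\tau=\tau(v,a)$ with $|L(E_1,\omega)-L(E_2,\omega)|\le|E_1-E_2|^{\tau}$ for $E_1,E_2\in[E_0-r_E(E_0),E_0+r_E(E_0)]$. Since $L(\cdot,\omega)$ is positive on the compact set $\mathscr{E}$ and $E_0\mapsto r_E(E_0)$ is continuous, Remark~\ref{exp-r} yields $r_E^i:=\inf_{E_0\in\mathscr{E}}r_E(E_0)>0$; then for $|E_1-E_2|<r_E^i$ (and, if either point lies off $\mathscr{E}$, using that $L(\cdot,\omega)$ is real-analytic there) one takes $E_0=E_1$ and invokes the local estimate. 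Finally $L(E,\omega)=L^a(E,\omega)-D$ by~(\ref{re2}) and $D$ is independent of $E$, so $|L(E_1,\omega)-L(E_2,\omega)|=|L^a(E_1,\omega)-L^a(E_2,\omega)|$, which is the asserted inequality.

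The only delicate point is the parameter bookkeeping: one must exhibit a single $\gamma$ forcing simultaneously the uniform positivity $L(\cdot,\omega)>0$ on $\mathscr{E}$, the smallness $\beta(\omega)<c_{v,a}\min(L/15,|D|)$ (and, reading Theorem~\ref{holder} verbatim, the companion bound $\beta(\omega)<c_{v,a}L/100$, which only strengthens the requirement by a fixed constant), and $\lambda_v>\lambda_0$, all while keeping $r_E^i>0$. The opposing monotonicities of $\lambda_p$ (decreasing in $\gamma$) and $\lambda_\omega$ (increasing in $\gamma$) are precisely what force the two mutually exclusive hypotheses of the theorem and the appearance of the crossover exponent $\gamma_0$ in the second one.
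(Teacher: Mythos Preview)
Your proposal is correct and follows essentially the same route as the paper: the paper's argument is the paragraph immediately preceding the statement of Theorem~\ref{larholder}, where the identity $D=\log(\lambda_a/D_a)$ is used (implicitly) to translate the $\lambda_a$--hypotheses into $\beta(\omega)<c_{v,a}|D|$, the choice $\gamma=\tfrac1{10}$ handles the case $D>0$ via $|D|<\tfrac1{15}L$, and the crossover $\gamma_0$ handles $D<0$; Theorem~\ref{holder} is then applied at every $E_0\in\mathscr{E}$ and Remark~\ref{exp-r} supplies $r_E^i$. Your write-up is in fact more explicit than the paper's on two points---the computation $D=\log(\lambda_a/D_a)$ and the extra $\beta(\omega)<c_{v,a}L/100$ constraint coming from the $\omega$-clause of Theorem~\ref{holder}---both of which the paper leaves to the reader.
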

\begin{remark}
  \label{schplholder}For the Schr\"odinger operators in the large coupling regime, $r_E^s$ and $r^s_{\omega}$ only depend on $\lambda_s$: For any finite Liouville $\omega$, if $\lambda_s>\max\left[\left(\frac{20\|v\|_{L^{\infty}(\Omega)}}{\epsilon_0^2}\right)^{50},\ \exp\left(\frac{16}{c_s}\beta(\omega)\right), 5\max_{x\in\mathbb{T}}|v'(x)|\right]$, where $\epsilon_0$ and $c_s$ are from Lemma \ref{BG} and Remark \ref{schldt} respectively, then for any $E,E'$ and irrational $\omega'$ satisfying $|E-E'|<\lambda_s^{-800}$, $|\omega-\omega'|<\lambda_s^{-800}$ and $\beta(\omega')\leq \beta(\omega)$, it has
  \begin{equation}
    \label{schlarholder}
    |L^s(E,\omega)-L^s(E',\omega')|<|E-E'|^{\tau_s}+|\omega-\omega'|^{\tau_s},
  \end{equation}where $\tau_s$ is defined in Remark \ref{schholder}.
\end{remark}~\\

This paper is organized as follows.  In Section 2,   the strong Birkhoof Ergodic Theorem for subharmonic functions with irrational shift on the Torus is proved. Then we study the positive Lyapunov exponent of the Jacobi operators in the large potential coupling regimes, and obtain the independences of  the quantities $\mu(\Omega_1)$ in (\ref{eq:rieszrep1}) for two subharmonic functions $u_n^a(\cdot,E,\omega)$ and $d(\cdot,\omega)$, which are defined in (\ref{uan}) and (\ref{defd}) respectively,  on the coupling numbers by Han and Zhang's method to make Theorem \ref{erg} applied to the Jacobi operators with suitable frequencies and coupling numbers in Section 3. It help us in getting Theorem \ref{ldt}, the LDT for the Jacobi cocycles, in Section 4. Combining this LDT with the Avalanche Principle, we prove that the positive Lyapunov exponent can be extended from one point to an interval, and calculate its length in Section 5. Finally, the proofs of the rest theorems, Theorem \ref{sldt}, \ref{holder} and \ref{larholder}, are presented in the last section. In additional,  the results of the Schr\"odinger operators stated in the remarks are proved under the proofs of the corresponding theorems of the Jacobi ones.

 \section{The Ergodic Theorem for Subharmonic Functions with Shift}
 Let $\{x\}=x-[x]$. For any positive integer $q$, complex number $\zeta=\xi+i\eta$ and $0\leq x<1$, define
 \begin{equation}
   f_{q,\zeta}(x)=\sum_{0\leq k<q}\log |\{x+\frac{k}{q}\}-\zeta|,\  F_{q,\zeta}(x)=\sum_{0\leq k<q}\log |\{x+k\omega\}-\zeta|\ \mbox{and}\  I(\zeta)=\int_0^1\log |y-\zeta |dy.
 \end{equation}
 Also define
 $$\|x\|=\min_{n\in \mathbb{Z}}|x+n|,$$
 and the distance on $\mathbb{T}$:
 \[dist(x,y)=\|x-y\|.\]
Given $x\in [0,1)$ and $q=1,2,\cdots$. Set $\mathcal{S}=\{\{x+\frac{k}{q}\}: 0\leq k <q\}$. We enumerate $\mathcal{S}$ as
\begin{enumerate}
  \item[{\rm{(1)}}]$0\leq \theta_0<\theta_1<\cdots < \theta_{q-1}\leq 1,\ \theta_j=\{x+\frac{k_j}{q}\}$;
  \item[{\rm{(2)}}]$\theta_{j+1}=\theta_j+\frac{1}{q}$.
\end{enumerate}
For any $\xi\in[0,1)$, find the integers $j^+(x,q,\xi)$ and $j^-(x,q,\xi)$ such that if $\theta_0\leq \xi <\theta_{q-1}$, then
\[\theta_{j^-(x,q,\xi)}\leq \xi <\theta_{j^+(x,q,\xi)}\ \mbox{and}\ j^+(x,q,\xi)-j^-(x,q,\xi)=1,\]
else $j^-(x,q,\xi)=q-1$ and $j^+(x,q,\xi)=0$.
Let $k^-(x,q,\xi)=k_{j^-(x,q,\xi)}$ and $k^+(x,q,\xi)=k_{j^+(x,q,\xi)}$. Then, we have
 \begin{lemma}\label{lemra}
 There exists an absolute constant $C$ such that
 \[\left |\sum_{0\leq k<q,k\not = k^{\pm}(x,q,\xi)}\log |\{x+\frac{k}{q}\}-\zeta|-qI(\zeta)\right |\leq C\log q,
 \] where $C$ is an absolute constants.
 \end{lemma}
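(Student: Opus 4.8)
The plan is to estimate the sum $\sum_{k\neq k^\pm}\log|\{x+k/q\}-\zeta|$ by comparing it with the integral $qI(\zeta)=q\int_0^1\log|y-\zeta|\,dy$. The key observation is that the points $\theta_0<\theta_1<\dots<\theta_{q-1}$ form an \emph{equally spaced} net on $[0,1)$ with gap exactly $1/q$ (this is condition (2) in the enumeration). Hence each $\theta_j$ is a sample point of the function $g(y):=\log|y-\zeta|$ on an interval of length $1/q$, and the sum $\frac1q\sum_j g(\theta_j)$ is a Riemann sum for $\int_0^1 g(y)\,dy$. The two excluded indices $k^\pm$ are precisely the ones whose sample points straddle $\xi=\Re\zeta$, i.e.\ the place where $g$ can blow up logarithmically; removing them is what makes the remaining Riemann sum comparable to the integral with only an $O(\log q)$ error.

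First I would write $qI(\zeta)=\sum_{j=0}^{q-1}\int_{\theta_j}^{\theta_j+1/q}g(y)\,dy$ (interpreting the net cyclically on $\mathbb{T}$, so that the ``interval'' attached to $\theta_{q-1}$ wraps around; this wrap contributes at most $O(1)$ since $g$ is integrable near its singularity). Then for each retained index $j$ (i.e.\ $j\neq j^\pm$) I would bound $\bigl|g(\theta_j)\cdot\frac1q-\int_{\theta_j}^{\theta_j+1/q}g(y)\,dy\bigr|$ by the oscillation of $g$ on that subinterval, namely $\frac1q\,\mathrm{osc}_{[\theta_j,\theta_j+1/q]}g$. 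The point is that on every subinterval not adjacent to $\xi$, the distance from $y$ to the singularity is comparable to $j/q$ for the $j$-th interval away from $\xi$, so $\mathrm{osc}\,g$ there is $O(1/j)$; summing $\sum_{j=1}^{q}\frac1q\cdot\frac1j = O(\frac{\log q}{q})$ and multiplying back by... wait, more carefully: the total error is $\sum_j \frac1q\,\mathrm{osc}_j \lesssim \sum_{j=1}^{q}\frac1q\cdot\frac{C}{j}\lesssim \frac{\log q}{q}$, which is even $o(1)$; but one must also handle the subintervals immediately neighboring $\xi$ (the ones adjacent to the removed pair), where the oscillation of $g$ is only $O(\log q)$ rather than $O(1/j)$ because $g$ itself is as large as $\log q$ there (the nearest retained sample point is at distance $\asymp 1/q$ from $\xi$). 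These $O(1)$-many neighboring terms each contribute $O(\frac{\log q}{q})$ from the Riemann-sum comparison, but more importantly the \emph{integral} $\int$ over the two subintervals containing $\xi$ itself (which no longer has a matching sample point in the retained sum) must be added back, and $\int_{\theta_{j^-}}^{\theta_{j^-}+1/q}|g(y)|\,dy \lesssim \frac{\log q}{q}$ since $\int_0^{1/q}|\log t|\,dt\asymp \frac{\log q}{q}$. Also the retained sample values $g(\theta_j)$ for the two indices adjacent to the gap are each $O(\log q)$ in absolute value, contributing $O(\log q)$ — hmm, that is $O(\log q)$, not $O((\log q)/q)$, but that is still within the claimed bound $C\log q$.

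Collecting: the dominant contributions are (i) the $O(1)$-many retained sample points nearest $\xi$, each of size $O(\log q)$, giving $O(\log q)$; (ii) the Riemann-sum comparison over all other subintervals, giving $O((\log q)/q)=O(\log q)$; (iii) the leftover integral over the two subintervals straddling $\xi$, giving $O((\log q)/q)$; (iv) the wrap-around term at $\theta_{q-1}$, $O(1)$. Summing these yields the bound $C\log q$ with $C$ absolute. I expect the main obstacle to be bookkeeping the geometry near $\xi$ cleanly — precisely controlling how far the nearest \emph{retained} sample points are from $\xi$ (they are at distance between $1/q$ and $2/q$, since the pair straddling $\xi$ was removed) and thereby showing the retained terms adjacent to the gap are genuinely only $O(\log q)$, uniformly in $x,q,\zeta$; everything else is a routine Riemann-sum-versus-integral estimate using $\mathrm{osc}_{[a,a+1/q]}\log|\,\cdot-\zeta|\lesssim 1/(q\,\mathrm{dist}(a,\xi))$ away from $\xi$ and the integrability $\int_0^{1/q}|\log t|\,dt\lesssim (\log q)/q$ near it.
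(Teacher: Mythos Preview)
Your proposal is correct and follows essentially the same approach as the paper: both compare the equally-spaced Riemann sum $\frac{1}{q}\sum_{j\neq j^\pm}\log|\theta_j-\zeta|$ with the integral $I(\zeta)$, exploiting that the retained sample points are all at distance $\ge 1/q$ from $\xi$. The one technical difference is that the paper uses the \emph{monotonicity} of $y\mapsto\log|y-\zeta|$ on each side of $\xi$ (it is decreasing on $(-\infty,\xi)$ and increasing on $(\xi,\infty)$), which lets it sandwich each sample value between two adjacent integrals and telescope everything down to the single leftover term $-4\int_{|y|<1/q}\log|y|\,dy\le \frac{C\log q}{q}$; you instead use oscillation bounds via $|g'(y)|\le 1/|y-\xi|$ and then treat the $O(1)$ intervals adjacent to the gap by hand. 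The monotonicity route is a bit cleaner bookkeeping-wise (no separate case analysis for the adjacent intervals, no explicit wrap-around term), but your decomposition into items (i)--(iv) yields the same $C\log q$ bound and is perfectly valid.
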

 \begin{proof}
  Using the above notations one has
  \[\frac{1}{q}\sum_{0\leq k<q,k\not = k^{\pm}(x,q,\xi)}\log |\{x+\frac{k}{q}\}-\zeta|=\frac{1}{q}\sum_{0\leq j <q,j\not = j^{\pm}(x,q,\xi)}\log |\theta_j-\xi-i\eta|.\]
 Also we have for any $0\leq j<j^-(x,q,\xi)$,
  \begin{equation}\nonumber
   \left |(\theta_{j^-(x,q,\xi)}-\xi)-(\theta_j-\xi)\right |\leq \frac{j^-(x,q,\xi)-j}{q},
  \end{equation}
  and for any $j^+(x,q,\xi)<j\leq q-1$,
   \begin{equation}\nonumber
    \left |(\theta_{j^+(x,q,\xi)}-\xi)-(\theta_j-\xi)\right |\leq \frac{j-j^+(x,q,\xi)}{q}.
   \end{equation}
   Note that if $1\leq j<j^-(x,q,\xi)$, then
   \[ \frac{1}{q}\log |\theta_j(x,q)-\xi-i\eta|<\int_{\theta_j(x,q)-1}^{\theta_j(x,q)}\log |y-\xi-i\eta|dy< \frac{1}{q}\log |\theta_{j(x,q)-1}-\xi-i\eta|,                                                    \]
   and if $j^+(x,q,\xi)<j\leq q-2$, then
   \[ \frac{1}{q}\log |\theta_j(x,q)-\xi-i\eta|<\int_{\theta_j(x,q)}^{\theta_j(x,q)+1}\log |y-\xi-i\eta|dy< \frac{1}{q}\log |\theta_{j(x,q)+1}-\xi-i\eta|.                                                    \]
   Thus
  \begin{eqnarray} \left |\frac{1}{q}\sum_{0\leq j <q,j\not = j^{\pm}(x,q,\xi)}\log |\theta_j-\xi-i\eta|-I(\zeta)\right |
  &\leq &\left |\sum_{0\leq j <q,j\not = j^{\pm}(x,q,\xi)} \frac{1}{q}\log |\theta_j-\xi-i\eta|-\sum_{j=0}^{q-1}\int_{\theta_j}^{\theta_{j+1}}\log |y-\xi-i\eta|dy\right |
 \nonumber\\
  &\leq &  -4\int_{|y|<\frac{1}{q}}\log |y|dy<\frac{C}{q}\log q.\nonumber
  \end{eqnarray}
 \end{proof}
\begin{remark}\label{rem001}
  Let $|\{x+k'_0/q\}-\xi|=\min_{k=1}^{q_s} |\{x+k/q\}-\xi|$, then $k'_0=k^-(x,q,\xi)$ or $k'_0=k^+(x,q,\xi)$. Note that $|\{x+k^-(x,q,\xi)/q\}-\xi|+|\{x+k^+(x,q,\xi)/q\}-\xi|=\frac{1}{q}$. Then the bigger one of these two numbers is larger than $\frac{1}{2q}$. Thus
  \[\left |\sum_{0\leq k<q,k\not = k'_0}\log |\{x+\frac{k}{q}\}-\zeta|-qI(\zeta)\right |\leq C\log q,
 \] where $C$ is an absolute constant.
\end{remark}

Let $\omega$ be irrational and $\left\{\frac{p_s}{q_s}\right\}_{s=1}^{\infty}$ be its continued fraction approximants. Then by (\ref{irtor0}), it has
\begin{equation}\label{irtor}
 \frac{k}{q_s(q_{s+1}+q_s)}<|k\omega-\frac{k p_s}{q_s}|<\frac{k}{q_sq_{s+1}}\leq \frac{1}{q_{s+1}},\ \ 0<k< q_s.
\end{equation}
\begin{lemma}\label{lemir}
 Let $|\{x+k_0\omega\}-\xi|=\min_{k=1}^{q_s} |\{x+k\omega\}-\xi|$,  then
 \[\left |F_{q_s,\zeta}(x)-q_sI(\zeta)\right |\leq C\log q_s+\left |\log |\{x+k_0\omega\}-\zeta|\right |.
 \]
 \end{lemma}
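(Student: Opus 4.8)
The plan is to reduce everything to the rational case already treated in Remark \ref{rem001}: I would compare the orbit sum $F_{q_s,\zeta}(x)$ with the mesh sum $f_{q_s,\zeta}(x)$ and show that, after isolating the single term coming from the orbit point nearest to $\xi$, the comparison costs only $O(\log q_s)$.

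Three structural inputs drive this. First, since $\gcd(p_s,q_s)=1$, the map $k\mapsto kp_s\bmod q_s$ permutes $\{0,\dots,q_s-1\}$, so $\{\{x+kp_s/q_s\}:0\le k<q_s\}$ is exactly the mesh $\mathcal{S}=\{\{x+k/q_s\}:0\le k<q_s\}$; write $g_k:=\{x+kp_s/q_s\}$ for the mesh point attached to $k$. Second, by (\ref{irtor}), $\dist\bigl(\{x+k\omega\},g_k\bigr)=\|k\omega-kp_s/q_s\|<q_{s+1}^{-1}\le q_s^{-1}$ for all $0\le k<q_s$ (with value $0$ at $k=0$), so the orbit is a $q_{s+1}^{-1}$-perturbation of the mesh. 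Third, the $q_s$ orbit points are $\tfrac1{2q_s}$-separated on $\mathbb{T}$: for $0<m<q_s$ the best-approximation property and (\ref{irtor0}) give $\|m\omega\|\ge\|q_{s-1}\omega\|>\frac1{q_{s-1}+q_s}>\frac1{2q_s}$. In particular at most one orbit point lies within $\tfrac1{4q_s}$ of $\xi$, and it can only be $\{x+k_0\omega\}$; and, on the mesh side, every mesh point except the one closest to $\xi$ is at distance $\ge\tfrac1{2q_s}$ from $\xi$ (the two mesh points flanking $\xi$ are at distances summing to $q_s^{-1}$). Throughout I identify the index sets $\{1,\dots,q_s\}$ and $\{0,\dots,q_s-1\}$, which is harmless since $\{x+q_s\omega\}$ and $\{x\}$ differ by less than $q_{s+1}^{-1}$.

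The main step is then the identity
\[
F_{q_s,\zeta}(x)=f_{q_s,\zeta}(x)+\sum_{0\le k<q_s}\log\frac{|\{x+k\omega\}-\zeta|}{|g_k-\zeta|},
\]
combined with the splitting of the last sum into regular indices, for which $\dist(g_k,\xi)>2q_{s+1}^{-1}$, and the remaining $O(1)$ singular indices. For a regular index $|\{x+k\omega\}-g_k|<\tfrac12|g_k-\zeta|$, so the summand is $O\bigl(q_{s+1}^{-1}\dist(g_k,\xi)^{-1}\bigr)$; since the mesh points occupy successive shells at distances $\gtrsim jq_s^{-1}$ from $\xi$, the regular part sums to $O\bigl(q_sq_{s+1}^{-1}\log q_s\bigr)=O(\log q_s)$, using $q_{s+1}\ge q_s$. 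For the finitely many singular indices, $\log|g_k-\zeta|=O(\log q_s)$ unless $g_k$ is the mesh point closest to $\xi$; that one exceptional mesh term is exactly the term extracted from $f_{q_s,\zeta}(x)$ in Remark \ref{rem001}, so $f_{q_s,\zeta}(x)$ minus it equals $q_sI(\zeta)+O(\log q_s)$. Using the orbit separation once more, every orbit point other than $\{x+k_0\omega\}$ satisfies $\log|\{x+k\omega\}-\zeta|=O(\log q_s)$, so after cancellation one is left with $F_{q_s,\zeta}(x)=\log|\{x+k_0\omega\}-\zeta|+q_sI(\zeta)+O(\log q_s)$.

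I expect the bookkeeping of the singular block to be the delicate point: one must check that precisely one genuinely unbounded logarithm survives, even when $q_{s+1}\asymp q_s$, a regime in which the orbit point closest to $\xi$ need not be the perturbation of the mesh point closest to $\xi$. This is settled by the $\tfrac1{2q_s}$-separation of the orbit: if $k_0$ is not the index of the closest mesh point, then the orbit point attached to that closest mesh index lies at distance $\ge\tfrac1{4q_s}$ from $\xi$, so its logarithm is $O(\log q_s)$, while the mesh partner $g_{k_0}$ of $\{x+k_0\omega\}$ is then not the closest mesh point, hence at distance $\ge\tfrac1{2q_s}$ from $\xi$, so its logarithm is $O(\log q_s)$ as well. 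Transferring the one admissible term $\log|\{x+k_0\omega\}-\zeta|$ to the right-hand side finally yields $\bigl|F_{q_s,\zeta}(x)-q_sI(\zeta)\bigr|\le C\log q_s+\bigl|\log|\{x+k_0\omega\}-\zeta|\bigr|$.
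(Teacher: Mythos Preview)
Your proposal is correct and follows essentially the same route as the paper: both compare the orbit sum $F_{q_s,\zeta}(x)$ with the rational mesh sum $f_{q_s,\zeta}(x)$ via (\ref{irtor}), bound the resulting $\log$-ratio sum by $\sum_{l}\frac{q_{s+1}^{-1}}{lq_s^{-1}}\le C\log q_s$, invoke Remark~\ref{rem001} for the mesh, and isolate the single potentially unbounded term $\log|\{x+k_0\omega\}-\zeta|$ using that the orbit points are $\tfrac{1}{2q_s}$-separated. The only cosmetic difference is that you obtain this separation directly from the best-approximation inequality $\|m\omega\|\ge\|q_{s-1}\omega\|>\tfrac{1}{2q_s}$, whereas the paper obtains the analogous bound (\ref{10004}) by first passing to the mesh; both arguments are standard and your bookkeeping of the case $k_0\ne k_0'$ is in fact slightly more explicit than the paper's.
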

 \begin{proof}
We declare that  if there exists $0\leq j<q_s$ such that $|\{x+j\omega\}-\xi|\leq \frac{1}{2q_s}-\frac{1}{q_{s+1}}$, then $j=k_0$.  Actually, if $|\{x+j\omega\}-\xi|\leq \frac{1}{2q_s}-\frac{1}{q_{s+1}}$ and $j \not =k_0$, then $|\{x+k_0\omega\}-\xi|\leq |\{x+j\omega\}-\xi|\leq \frac{1}{2q_s}-\frac{1}{q_{s+1}}$, which implies
\[\left|\{x+k_0\omega\}-\{x+j\omega\}\right|\leq \frac{1}{2q_s}-\frac{2}{q_{s+1}}.\]
 By (\ref{irtor}), we have
\[\left |\{x+j\frac{p_s}{q_s}\}-\{x+k_0\frac{p_s}{q_s}\}\right |<\frac{1}{q_s}.\] It is a contraction. Thus there is at  most  one integer $0\leq k_0<q_s$ such that $|\{x+k_0\omega\}-\xi|< \frac{1}{2q_s}-\frac{1}{q_{s+1}}$ and
\begin{equation}\label{10004}
 |\{x+k\omega\}-\xi|>\frac{1}{2q_s}-\frac{1}{q_{s+1}}>\frac{1}{4q_{s}},\ \ k\not =k_0.
\end{equation}

Due to (\ref{irtor}),
  we have for any $0\leq k< q_s$,
  \begin{equation}\label{10003} \left | |\{x+k\omega\}-\xi|-|\{x+k\frac{p_s}{q_s} \}-\xi |\right |<\frac{1}{q_{s+1}},  \end{equation}
  or there exists only one integer $0\leq m<q_s$ such that
  \[\left | |\{x+m\omega\}-\xi|+|\{x+m\frac{p_s}{q_s} \}-\xi |\right |>1-\frac{1}{q_{s+1}},\]
  and the others satisfies (\ref{10003}). Notice that $\{\{x+\frac{k}{q_s}\}:0\leq k<q_s\}=\{\{x+\frac{kp_s}{q_s}\}:0\leq k<q_s\}$. So, by Remark \ref{rem001} , we have
\[ |\{x+k\frac{p_s}{q_s}\}-\xi|>\frac{1}{2q_{s}},\ \ k\not =k'_0,\]
Combining it with (\ref{10003}), we  have
\begin{eqnarray}
    \left |\sum_{0\leq k<q_s,k\not = k_0,k'_0}\left [\log |\{x+k\omega\}-\zeta|-\log |\{x+kp_s/q_s\}-\zeta|\right ]\right |&\leq &\sum_{0\leq k<q_s,k\not = k_0,k'_0}\left |\log \left [1+\frac{|\{x+k\omega\}-\{x+kp_s/q_s\}|}{|\{x+kp_s/q_s\}-\zeta|}\right ]\right |\nonumber\\
    &\leq & C\sum_{l=1}^{q_s}\frac{q^{-1}_{s+1}}{lq_s^{-1}}\leq C\frac{q_s\log q_s}{q_{s+1}}\leq C\log q_s.\nonumber
  \end{eqnarray}
By Remark \ref{rem001}, it implies that
\begin{equation}\label{10005}
\left |\sum_{0\leq k<q,k\not =k_0, k'_0}\log |\{x+k\omega\}-\zeta|-q_sI(\zeta)\right |\leq C\log q.
\end{equation}
Then, combining it with (\ref{10004}), we complete the proof.
 \end{proof}

\begin{lemma}\label{lemlq}Let $n=lq_s<q_{s+1}$, then
  \begin{equation}
  |F_{n,\zeta}(x)-n I(\zeta)|<Cl\log q_s+| \log D(x-\xi,-\omega,lq_s)|+2 \beta n.
\end{equation}
\end{lemma}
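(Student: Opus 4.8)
The plan is to bootstrap Lemma \ref{lemir} from one block of length $q_s$ to the $l$ consecutive blocks that make up $\{0,1,\dots,lq_s-1\}$, and then to absorb the $l$ resulting error terms by a single Diophantine spacing estimate carried out at the scale $q_{s+1}$. First I would write $\{0,1,\dots,lq_s-1\}=\bigsqcup_{i=0}^{l-1}B_i$ with $B_i=\{iq_s,\dots,(i+1)q_s-1\}$, and, setting $y_i=\{x+iq_s\omega\}$, use $\{x+(iq_s+j)\omega\}=\{y_i+j\omega\}$ to get $F_{n,\zeta}(x)=\sum_{i=0}^{l-1}F_{q_s,\zeta}(y_i)$. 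Applying Lemma \ref{lemir} with base point $y_i$ to each summand then gives
\[
\bigl|F_{n,\zeta}(x)-nI(\zeta)\bigr|\ \le\ Cl\log q_s+\sum_{i=0}^{l-1}\bigl|\log|\{x+m_i\omega\}-\zeta|\bigr|,
\]
where $m_i\in\{iq_s+1,\dots,(i+1)q_s\}$ is the index for which $\{x+m_i\omega\}$ is closest to $\xi$ inside the $i$-th block. The point to keep in mind is that the $m_i$ are pairwise distinct and together form a subset of $\{1,\dots,n\}$; this is what makes the next step possible.

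The core of the argument is the claim that \emph{at most one} index $i$ can satisfy $|\{x+m_i\omega\}-\xi|<\tfrac1{4q_{s+1}}$. I would argue as in the proof of Lemma \ref{lemir}, but now at scale $q_{s+1}$ instead of $q_s$: if two distinct $m,m'\in\{1,\dots,n\}$ both produced points within $\tfrac1{4q_{s+1}}$ of $\xi$ on $\mathbb{T}$, then $\|(m-m')\omega\|<\tfrac1{2q_{s+1}}$; but $0<|m-m'|\le lq_s-1<q_{s+1}$, and the best-approximation property of the convergents $\{p_s/q_s\}$ together with (\ref{irtor0}) gives $\|j\omega\|\ge\|q_s\omega\|=|q_s\omega-p_s|>\tfrac1{q_s+q_{s+1}}>\tfrac1{2q_{s+1}}$ for every $0<j<q_{s+1}$ --- a contradiction. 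This is the only genuinely non-routine step, and it is essentially the same mechanism already used inside the proof of Lemma \ref{lemir}.

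It then remains to split the sum over $i$ at this distinguished ``bad'' index $i_0$, if it exists at all. For each $i\ne i_0$ we have $|\{x+m_i\omega\}-\zeta|\ge|\{x+m_i\omega\}-\xi|\ge\tfrac1{4q_{s+1}}$, and since these points stay in a bounded region we also have $|\{x+m_i\omega\}-\zeta|\le C'$, so by $\log q_{s+1}\le\beta q_s$ each such term is $\le\beta q_s+C$; summing the at most $l$ of them contributes $\le\beta n+Cl$. For the single index $i_0$ one has $D(x-\xi,-\omega,lq_s)\le|\{x+m_{i_0}\omega\}-\xi|\le|\{x+m_{i_0}\omega\}-\zeta|$, hence that term is $\le|\log D(x-\xi,-\omega,lq_s)|+C$. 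Collecting the three contributions, the $Cl\log q_s$ together with the $Cl$ terms is absorbed into $Cl\log q_s$, the $\beta n$ into $2\beta n$, and adding the remaining term yields the asserted inequality. The only obstacle worth flagging is making the spacing claim of the second paragraph precise; the block decomposition and the final summation are routine bookkeeping on top of Lemma \ref{lemir}.
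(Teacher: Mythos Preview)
Your proof is correct and follows essentially the same route as the paper: decompose into $l$ blocks of length $q_s$, apply Lemma~\ref{lemir} to each, then show at most one of the $l$ ``closest points'' can lie within $\tfrac{1}{4q_{s+1}}$ of $\xi$ and bound the remaining $l-1$ terms by $l\log(4q_{s+1})\le 2\beta n$. The only minor difference is in how the uniqueness claim is justified --- the paper argues via the spacing $|q_s\omega-p_s|>\tfrac{1}{2q_{s+1}}$ between consecutive block minima, while you invoke the best-approximation property $\|j\omega\|\ge\|q_s\omega\|$ for $0<j<q_{s+1}$ directly; your version is slightly cleaner but the two are equivalent.
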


\begin{proof}
Define $x_h=x+hq_s \omega$ and $|\{x_h+k_h\omega\}-\xi|=\min_{k=0}^{q_s-1}|\{x_h+k\omega\}-\xi|$.
Then due to Lemma \ref{lemir}, we have
\[
|F_{lq_s,\zeta}(x)-lq_s I(\zeta)|\leq \sum_{h=0}^{l-1}  |F_{q_s,\zeta}(x_h)-lq_s I(\zeta)|     \leq\sum_{h=0}^{l-1}\left |\log |\{x_h+k_h\omega\}-\zeta|\right |+Cl\log q_s.
 \]
Note that
\begin{equation}\label{10012}  \frac{1}{2q_{s+1}}<\frac{1}{q_s+q_{s+1}}<|q_s\omega-p_s|<\frac{1}{q_{s+1}}.\end{equation}
Define $Q=[\frac{q_{s+1}}{q_s}]$ and
 let $j$ be the  number such that $ |\{x_j+k_j\omega\}-\xi|<\frac{1}{4q_{s+1}}$. Then by (\ref{10012}) and the declaration in the proof of Lemma \ref{lemir}, we have for any $ j-2Q+1\leq h<j$ and $j<h\leq j+2Q-1$,
 \[ |\{x_h+k_h\omega\}-\xi|>\frac{1}{4q_{s+1}}.\]
 Thus there are at most one point which is small than $\frac{1}{4q_{s+1}}$. Recall that $n=lq_s$. Then we have
\begin{eqnarray}
  |F_{lq_s,\zeta}(x)-lq_s I(\zeta)|&\leq &\sum_{h=0}^{l-1}\left |\log |\{x_h+k_h\omega\}-\zeta|\right |+Cl\log q_s \nonumber\\
  &\leq &| \log D(x-\xi,-\omega,lq_s)|+Cl\log q_s +l\left |\log \frac{1}{4q_{s+1}}\right | \nonumber\\
  &\leq & | \log D(x-\xi,-\omega,lq_s)|+Cl\log q_s +2l\log q_{s+1} \nonumber\\
  & \leq & | \log D(x-\xi,-\omega,lq_s)|+Cl\log q_s+2\frac{n}{q_s}\beta q_s\nonumber\\
  & \leq & | \log D(x-\xi,-\omega,lq_s)|+Cl\log q_s+2\beta n.\nonumber
\end{eqnarray}
\end{proof}

\begin{lemma}[Lemma 3.2 in \cite{GS}]
  Let $\Omega\subset \mathbb{T}$ be an arbitrary finite set. Then
  \[\int_{\mathbb{T}}\exp\left (\sigma |\log dist (x,\Omega)|\right )dx\leq \frac{2^{\sigma}}{1-\sigma}(\sharp \Omega)^{\sigma}\]
  for any $0<\sigma<1$.
\end{lemma}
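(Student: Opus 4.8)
The plan is to reduce the whole estimate to a one-variable computation via the layer-cake (distribution-function) formula; the one point that needs a moment's thought is producing the factor $(\sharp\Omega)^{\sigma}$ rather than the weaker $\sharp\Omega$.

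First I would observe that, since $\dist(x,\Omega)=\min_{\theta\in\Omega}\|x-\theta\|\le\frac{1}{2}<1$ for every $x\in\mathbb{T}$, one has $|\log\dist(x,\Omega)|=-\log\dist(x,\Omega)$, so that $\exp\bigl(\sigma|\log\dist(x,\Omega)|\bigr)=\dist(x,\Omega)^{-\sigma}$, and it suffices to bound $\int_{\mathbb{T}}\dist(x,\Omega)^{-\sigma}\,dx$. Write $N=\sharp\Omega$. The elementary input is that for every $r>0$
\[
\mes\{x\in\mathbb{T}:\dist(x,\Omega)<r\}\le\min(1,2Nr),
\]
because $\{\dist(x,\Omega)<r\}$ is covered by the $N$ arcs of length $2r$ centered at the points of $\Omega$, while at the same time any subset of $\mathbb{T}$ has measure at most $1$. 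It is precisely this truncation by $1$ that upgrades $N$ to $N^{\sigma}$: the crude alternative $\dist(x,\Omega)^{-\sigma}\le\sum_{\theta\in\Omega}\|x-\theta\|^{-\sigma}$ together with $\int_{\mathbb{T}}\|x-\theta\|^{-\sigma}\,dx=\frac{2^{\sigma}}{1-\sigma}$ would only yield the inferior bound $\frac{2^{\sigma}}{1-\sigma}N$.

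Then I would apply the layer-cake formula together with the equivalence $\dist(x,\Omega)^{-\sigma}>t\iff\dist(x,\Omega)<t^{-1/\sigma}$:
\[
\int_{\mathbb{T}}\dist(x,\Omega)^{-\sigma}\,dx=\int_{0}^{\infty}\mes\{x:\dist(x,\Omega)<t^{-1/\sigma}\}\,dt.
\]
Inserting the estimate above with $r=t^{-1/\sigma}$, and noting that $2Nt^{-1/\sigma}\le1$ exactly when $t\ge(2N)^{\sigma}$, I would split the $t$-integral at $(2N)^{\sigma}$:
\[
\int_{0}^{(2N)^{\sigma}}1\,dt+\int_{(2N)^{\sigma}}^{\infty}2N\,t^{-1/\sigma}\,dt=(2N)^{\sigma}+\frac{\sigma}{1-\sigma}(2N)^{\sigma}=\frac{(2N)^{\sigma}}{1-\sigma}=\frac{2^{\sigma}}{1-\sigma}N^{\sigma},
\]
where the second integral converges because $1/\sigma>1$ and its value is read off from the elementary antiderivative of $t^{-1/\sigma}$. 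This is exactly the asserted bound $\frac{2^{\sigma}}{1-\sigma}(\sharp\Omega)^{\sigma}$, so the proof is complete.

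Thus the argument is entirely routine once the right sublevel-set estimate $\min(1,2Nr)$ is in hand; the only genuine obstacle is recognizing that one must retain the information that these sublevel sets saturate $\mathbb{T}$ as soon as $r\gtrsim 1/N$, since discarding it gives the wrong power of $\sharp\Omega$.
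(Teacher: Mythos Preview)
Your proof is correct. The paper does not supply its own proof of this lemma; it is quoted verbatim as Lemma~3.2 of \cite{GS} and used as a black box, so there is no argument in the paper to compare against. Your layer-cake computation with the sublevel bound $\mes\{\dist(\cdot,\Omega)<r\}\le\min(1,2Nr)$ is exactly the standard proof, and your remark that retaining the truncation by $1$ is what produces $(\sharp\Omega)^{\sigma}$ rather than $\sharp\Omega$ is the right point to flag.
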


\begin{lemma}
  Let $n=lq_s$. Then for any $0<\sigma<1$ ,
\[
  \int_{\mathbb{T}}\exp\left (\sigma|F_{n,\zeta}(x)-n I(\zeta)|\right )dx <\exp\left (5\sigma \beta n\right ).
\]
\end{lemma}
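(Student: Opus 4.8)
The plan is to get the integral bound directly from Lemma~\ref{lemlq} combined with the preceding deviation estimate (Lemma~3.2 of~\cite{GS}). First I would invoke Lemma~\ref{lemlq}, which gives the pointwise bound
\[
|F_{n,\zeta}(x)-nI(\zeta)|<Cl\log q_s+\bigl|\log D(x-\xi,-\omega,lq_s)\bigr|+2\beta n,
\]
and observe that the terms $Cl\log q_s$ and $2\beta n$ do not depend on $x$. Exponentiating and pulling these constants out of the integral leaves
\[
\int_{\mathbb{T}}\exp\bigl(\sigma|F_{n,\zeta}(x)-nI(\zeta)|\bigr)\,dx\le e^{\sigma(Cl\log q_s+2\beta n)}\int_{\mathbb{T}}\exp\bigl(\sigma|\log D(x-\xi,-\omega,lq_s)|\bigr)\,dx,
\]
so the whole question reduces to estimating the remaining integral.

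For that I would note that $x\mapsto D(x-\xi,-\omega,lq_s)$ is precisely $\dist(x,\Omega)$ for a finite set $\Omega\subset\mathbb{T}$ of cardinality at most $lq_s=n$: substituting $y=x-\xi$ (a unit-Jacobian change of variable on $\mathbb{T}$) identifies the integral with $\int_{\mathbb{T}}\exp(\sigma|\log\dist(y,\Omega_0)|)\,dy$ where $\Omega_0=\{j\omega\bmod 1:\ 0\le j<lq_s\}$ and $\sharp\Omega_0\le lq_s$. Applying Lemma~3.2 of~\cite{GS} with this set and the given $\sigma\in(0,1)$ gives
\[
\int_{\mathbb{T}}\exp\bigl(\sigma|\log D(x-\xi,-\omega,lq_s)|\bigr)\,dx\le\frac{2^\sigma}{1-\sigma}(lq_s)^\sigma,
\]
and therefore
\[
\int_{\mathbb{T}}\exp\bigl(\sigma|F_{n,\zeta}(x)-nI(\zeta)|\bigr)\,dx\le\frac{2^\sigma}{1-\sigma}(lq_s)^\sigma\,e^{\sigma Cl\log q_s}\,e^{2\sigma\beta n}.
\]

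It remains to absorb every lower-order factor into $e^{5\sigma\beta n}$, and here I would use only the continued-fraction facts already recorded. From $n=lq_s<q_{s+1}$ and $\log q_{s+1}\le\beta q_s$ one gets $n<q_{s+1}\le e^{\beta q_s}\le e^{\beta n}$, hence $(lq_s)^\sigma=n^\sigma\le e^{\sigma\beta n}$; since $\log q_s\le\log q_{s+1}\le\beta q_s$ one gets $l\log q_s\le l\beta q_s=\beta n$, hence $e^{\sigma Cl\log q_s}\le e^{C\sigma\beta n}$; and the remaining prefactor $2^\sigma/(1-\sigma)$ is then swallowed by the slack. Collecting exponents yields the claimed bound $\exp(5\sigma\beta n)$. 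I expect the only point needing care to be exactly this last bookkeeping step — checking that the absolute constant $C$ from Lemma~\ref{lemlq}, the combinatorial factor $(lq_s)^\sigma$, and the $\sigma$-dependent constant from the \cite{GS} lemma all fit inside the gap between $2\sigma\beta n$ and $5\sigma\beta n$ — which they do precisely because $n<q_{s+1}\le e^{\beta q_s}$ forces both $\log n$ and $l\log q_s$ to be at most $\beta n$; there is no genuine analytic obstacle beyond the two results already in hand.
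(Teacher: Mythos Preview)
Your argument is correct and matches the paper's proof essentially line for line: invoke Lemma~\ref{lemlq}, pull out the $x$-independent terms, apply the \cite{GS} lemma to the set $\Omega=\{-m\omega:0\le m<lq_s\}$ (your $\Omega_0$ has the wrong sign but the same cardinality, which is all that matters), and absorb $\log(lq_s)$ and $l\log q_s$ into $\beta n$ via $\log q_{s+1}\le\beta q_s$. The paper handles the final bookkeeping with the same looseness you do, writing the bound as $\exp(2C\sigma\log(lq_s)+C\sigma l\log q_s+2\sigma\beta n)<\exp(5\sigma\beta n)$.
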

\begin{proof}
  Set $\Omega=\{-m\omega:0\leq m<lq_s\}$. Then $\sharp \Omega=lq_s$ and $dist (x-\xi,\Omega)=D(x-\xi,-\omega,lq_s)$. Thus
  \[\int_{\mathbb{T}}\exp\left (\sigma |\log D(x-\xi,-\omega,lq_s)|\right )dx=\int_{\mathbb{T}}\exp\left (\sigma |\log dist (x,\Omega)|\right )dx\leq  \frac{2^{\sigma}}{1-\sigma} (lq_s)^{\sigma}.\]
  By Lemma \ref{lemlq}, we have
  \[ \int_{\mathbb{T}}\exp\left (\sigma|F_{n,\zeta}(x)-n I(\zeta)|\right )dx\leq \exp(2C\sigma \log (lq_s) + C\sigma l\log q_s+2\sigma \beta n)<\exp(5\sigma \beta n). \]
\end{proof}
\begin{remark}
  It is easily seen that there exists a constant $\hat C(\zeta)$ such that for any $n>0$ and $0<\sigma<1$,
  \[
  \int_{\mathbb{T}}\exp\left (\sigma|F_{n,\zeta}(x)-n I(\zeta)|\right )dx <\exp\left (\sigma \hat C(\zeta) n\right ).
\]So, the above lemmas show that for any $n=lq_s<q_{s+1}$,  the large constant $\hat C(\zeta)$ can be changed by $5\beta$. It is also obvious that if $n=lq_s$ is large, then $\hat C(\zeta)$ can be changed by $5\bar\beta$.
\end{remark}~\\

Now for any  $n$, there exist  $q_s$  and $q_{s+1}$ such that $q_s\leq n<q_{s+1}$. Let $n=l_sq_{s}+r_s$, where
$l_s=[\frac{n}{q_{s}}]$ and $ 0\leq r_s=n-l_sq_s<q_{s}$. Then,
\begin{eqnarray}
\int_0^1\exp(\sigma |F_{n,\zeta}(x)-n I(\zeta)|)dx &\leq & \left
[\int_0^1\exp(2 \sigma |F_{l_sq_{s},\zeta}(x)-l_sq_{s}
I(\zeta)|)dx\right ]^{\frac{1}{2}}\times \left [\int_0^1\exp(2 \sigma
|F_{r_s,\zeta}(x)-r_s
I(\zeta)|)dx\right ]^{\frac{1}{2}}\nonumber\\
&\leq & \exp(5\sigma\beta n) \left [\int_0^1\exp(2\sigma |F_{r_s,\zeta}(x)-r_s
I(\zeta)|)dx\right ]^{\frac{1}{2}}.\nonumber
\end{eqnarray}
Let $r_{s}=l_{s-1}q_{s-1}+r_{s-1}$, where
$l_{s-1}=[\frac{r_{s}}{q_{s-1}}]$ and $ 0\leq r_{s-1}=r_{s}-l_{s-1}q_{s-1}<q_{s-1}$. Then
\begin{eqnarray}
   \left
[\int_0^1\exp(2\sigma  |F_{r_s,\zeta}(x)-r_s I(\zeta)|)dx\right
]^{\frac{1}{2}}&\leq & \left
[\int_0^1\exp(2^2 \sigma |F_{l_{s-1}q_{s-1},\zeta}(x)-l_{s-1}q_{s-1}
I(\zeta)|)dx\right ]^{\frac{1}{2^2}} \nonumber\\
&&\ \ \times\left [\int_0^1\exp(2^2 \sigma
|F_{r_{s-1},\zeta}(x)-r_{s-1}
I(\zeta)|)dx\right ]^{\frac{1}{2^2}}\nonumber\\
&\leq & \exp\left (5\sigma \beta r_s \right)\left
[\int_0^1\exp(2^2\sigma |F_{r_{s-1},\zeta}(x)-r_{s-1} I(\zeta)|)dx\right
]^{\frac{1}{2^2}}\nonumber.
\end{eqnarray}
We use induction here. Let $r_{s-i+1}=l_{s-i}q_{s-i}+r_{s-i}$, where
$l_{s-i}=[\frac{r_{s-i+1}}{q_{s-i}}]$ and $ 0\leq r_{s-i}=r_{s-i+1}-l_{s-i}q_{s-i}<q_{s-i}$. Then
\begin{eqnarray}
   \left
[\int_0^1\exp(2^{i}\sigma |F_{r_{s-i+1},\zeta}(x)-r_{s-i+1} I(\zeta)|)dx\right
]^{\frac{1}{2^i}}&\leq & \left
[\int_0^1\exp(2^{i+1}\sigma |F_{l_{s-i}q_{s-i},\zeta}(x)-l_{s-i}q_{s-i}
I(\zeta)|)dx\right ]^{\frac{1}{2^{i+1}}}\nonumber\\
&&\ \ \times \left [\int_0^1\exp(2^{i+1}\sigma
|F_{r_{s-i},\zeta}(x)-r_{s-i}
I(\zeta)|)dx\right ]^{\frac{1}{2^{i+1}}}\nonumber\\
&\leq & \exp\left (5\sigma\beta r_{s-i+1} \right)\times \left
[\int_0^1\exp(2^{i+1}\sigma |F_{r_{s-i},\zeta}(x)-r_{s-i} I(\zeta)|)dx\right
]^{\frac{1}{2^{i+1}}}\nonumber.
\end{eqnarray}
Note that for any irrational $\omega$, there exists an absolute constant $\bar C>1$ such that $q_{s+1}>\bar C q_s$. Thus, there exists $m\ge 0$ such that $\bar C^{-m}\leq \beta$. Therefore, if
$\zeta\in \Omega'$, where $\Omega'$ is a compact subregion of $\mathbb{C}$, then
\begin{eqnarray}
\int_0^1\exp(\sigma |F_{n,\zeta}(x)-n I(\zeta)|)dx &\leq & \exp\left
[ 5\sigma \beta n+5\sigma \beta (r_s+r_{s-1}+\cdots+r_{s-m+1})\right ]\nonumber\\
&&\ \ \times \left [\int_0^1\exp(2^{m+1} \sigma
|F_{r_{s-m},\zeta}(x)-r_{s-m}
I(\zeta)|)dx\right ]^{\frac{1}{2^{m+1}}}\nonumber\\
&\leq & \exp\left
[ 5\sigma \beta n+5\sigma \beta (q_s+q_{s-1}+\cdots+q_{s-m+1})+\hat C(\zeta)\sigma r_{s-m}\right ]\nonumber\\
&\leq & \exp\left
[ 5\sigma \beta n+5\sigma \beta (q_s+\bar C^{-1}q_s+\cdots+\bar C^{-m+1}q_{s})+\hat C(\zeta)\sigma q_{s-m}\right ]\nonumber\\
&\leq & \exp\left
[ 5\sigma \beta n+\tilde{C}\sigma \beta q_s+\hat C(\zeta)\bar C^{-m}\sigma q_{s}\right ]\nonumber\\
&\leq & \exp\left
[ 5\sigma \beta n+\tilde{C}\sigma \beta q_s+\hat C(\zeta)\beta \sigma q_{s}\right ]<\exp(C\sigma \beta n),\nonumber
\end{eqnarray}
where $\tilde{C}=\sum_{k=0}^{\infty}\bar C^{-k}<\infty$  is an absolute constant and $C=C(\Omega')$.
Thus
\begin{lemma}
There exists $c=c(\omega,\bar C)$ such that for any positive $n$  and  $0<\sigma\leq c$, we have
\[\int_0^1\exp(\sigma |F_{n,\zeta}(x)-n I(\zeta)|)dx<\exp(C\sigma \beta n).\]
\end{lemma}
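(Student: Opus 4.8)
The plan is to bootstrap from the case $n=lq_s<q_{s+1}$, settled in the previous lemma, to an arbitrary $n$ by a greedy (Ostrowski-type) decomposition of $n$ into blocks of the form $l_jq_j$ running down the tail of the continued fraction, estimating each block by that lemma and gluing the blocks together with an iterated Cauchy--Schwarz inequality whose exponent doubles at each step.

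First I would fix $n\ge 1$, pick $s$ with $q_s\le n<q_{s+1}$, and perform the Euclidean division $n=l_sq_s+r_s$, $0\le r_s<q_s$. Using $F_{n,\zeta}(x)=F_{l_sq_s,\zeta}(x)+F_{r_s,\zeta}(x+l_sq_s\omega)$ and $nI(\zeta)=l_sq_sI(\zeta)+r_sI(\zeta)$, the triangle inequality, Cauchy--Schwarz, and translation invariance of Lebesgue measure on $\TT$ give
\[
\int_0^1\exp\big(\sigma|F_{n,\zeta}-nI(\zeta)|\big)dx\le\Big[\int_0^1\exp\big(2\sigma|F_{l_sq_s,\zeta}-l_sq_sI(\zeta)|\big)dx\Big]^{1/2}\Big[\int_0^1\exp\big(2\sigma|F_{r_s,\zeta}-r_sI(\zeta)|\big)dx\Big]^{1/2}.
\]
Since $l_sq_s\le n<q_{s+1}$, the previous lemma (with $2\sigma$ in place of $\sigma$) bounds the first factor by $\exp(5\sigma\beta l_sq_s)\le\exp(5\sigma\beta n)$. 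I would then iterate on the remainder: writing $r_{s-i+1}=l_{s-i}q_{s-i}+r_{s-i}$, $0\le r_{s-i}<q_{s-i}$, at stage $i\ge1$, I split $\big[\int_0^1\exp(2^i\sigma|F_{r_{s-i+1},\zeta}-r_{s-i+1}I(\zeta)|)dx\big]^{2^{-i}}$ by Cauchy--Schwarz into a $q_{s-i}$-block, controlled again by the previous lemma because $l_{s-i}q_{s-i}\le r_{s-i+1}<q_{s-i+1}$, and a strictly shorter remainder block, gaining a factor $\exp(5\sigma\beta r_{s-i+1})$ at each stage.

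I would truncate the recursion after $m=m(\omega,\bar C)$ stages, where $m$ is the least integer with $\bar C^{-m}\le\beta(\omega)$ and $\bar C>1$ is the growth rate of the denominators ($q_{s+1}>\bar C q_s$). For the final leftover $\big[\int_0^1\exp(2^{m+1}\sigma|F_{r_{s-m},\zeta}-r_{s-m}I(\zeta)|)dx\big]^{2^{-(m+1)}}$ I would invoke the crude universal estimate from the preceding remark, $\int_0^1\exp(\tau|F_{k,\zeta}-kI(\zeta)|)dx\le\exp(\tau\hat C(\zeta)k)$ for $0<\tau<1$, which, since $r_{s-m}<q_{s-m}\le\bar C^{-m}q_s$, contributes at most $\exp(\hat C(\zeta)\sigma\bar C^{-m}q_s)\le\exp(\hat C(\zeta)\beta\sigma q_s)\le\exp(\hat C(\zeta)\beta\sigma n)$. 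Collecting all contributions and using $r_{s-i+1}<q_{s-i+1}\le\bar C^{-(i-1)}q_s\le\bar C^{-(i-1)}n$ and $q_s\le n$, the total exponent is at most $\big(5+5\sum_{k\ge0}\bar C^{-k}+\sup_{\zeta\in\Omega'}\hat C(\zeta)\big)\sigma\beta n$ for $\zeta$ in a compact set $\Omega'$, which is the asserted bound with $C=C(\Omega')$. For this chain of Hölder steps to be legitimate the largest exponent $2^{m+1}\sigma$ must stay below $1$, so one takes $c=c(\omega,\bar C):=2^{-(m+1)}$; when $s<m$ the recursion simply terminates at $q_0=1$ with an empty leftover block, which only makes the estimate easier.

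The only delicate point is the calibration of the truncation length $m$: it must be large enough that the crude constant $\hat C(\zeta)$ ever only multiplies a denominator $q_{s-m}$ already shrunk by $\bar C^{-m}\le\beta$, so that that term is absorbed into $O(\sigma\beta n)$, yet $m$ is forced to be finite by the geometric lacunarity $q_{s+1}>\bar C q_s$, which also makes the accumulated ``good'' factors $\prod_i\exp(5\sigma\beta r_{s-i+1})$ a convergent geometric product rather than a proliferating one. This interplay is exactly why the admissible range of $\sigma$, i.e. the constant $c$, ends up depending on $\omega$; everything else is routine bookkeeping.
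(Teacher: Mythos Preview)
Your proposal is correct and follows the same argument as the paper: the same greedy Ostrowski-type decomposition $n=l_sq_s+r_s$, $r_{s-i+1}=l_{s-i}q_{s-i}+r_{s-i}$, the same iterated Cauchy--Schwarz doubling the exponent at each stage, truncation at level $m$ chosen so that $\bar C^{-m}\le\beta$, and the crude $\hat C(\zeta)$ bound for the final leftover. Your identification $c=2^{-(m+1)}$ agrees with the paper's $c=2^{\log_{\bar C}\beta}$, and your remark that the recursion terminates early when $s<m$ is a sound observation the paper leaves implicit.
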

\begin{remark}\label{constant}
It is easy to see that the constant $c=2^{\log_{\bar C} \beta}$ is an increasing function of $\beta$. So, $c_0:=2^{\log_{\bar C} \beta_G}\le c(\omega,\bar C)$ for any irrational $\omega$, where $\beta_G=\beta(\omega_G)$ and $\omega_G=\frac{\sqrt 5-1}{2}$.
\end{remark}
\begin{remark}\label{remforhar}
  Note that $\log |x|$ is a subharmonic function. Thus, if $h$ is a 1-periodic harmonic function defined on a neighborhood of real axis, then for any positive $n$ and  $0<\sigma\leq c_0$, we have
\begin{equation}\label{12014}
 \int_0^1\exp(\sigma | \sum_{k=1}^n h(\{x+k\omega\})-n \int_0^1h(y)dy|)dx<\exp(C\sigma \beta n).
\end{equation}
\end{remark}

Now let us recall the following Riesz's theorem proved in \cite{GS1}:
\begin{lemma}
\label{lem:riesz} Let $u:\Omega\to \IR$ be a subharmonic function on
a domain $\Omega\subset\IC$. Suppose that $\partial \Omega$ consists
of finitely many piece-wise $C^1$ curves. There exists a positive
measure $\mu$ on~$\Omega$ such that for any $\Omega_1\Subset \Omega$
(i.e., $\Omega_1$ is a compactly contained subregion of~$\Omega$),
\begin{equation}
\label{eq:rieszrep} u(z) = \int_{\Omega_1}
\log|z-\zeta|\,d\mu(\zeta) + h(z),
\end{equation}
where $h$ is harmonic on~$\Omega_1$ and $\mu$ is unique with this
property. Moreover, $\mu$ and $h$ satisfy the bounds \be
\mu(\Omega_1) &\le& C(\Omega,\Omega_1)\,(\sup_{\Omega} u - \sup_{\Omega_1} u), \label{21002} \\
\|h-\sup_{\Omega_1}u\|_{L^\infty(\Omega_2)} &\le&
C(\Omega,\Omega_1,\Omega_2)\,(\sup_{\Omega} u - \sup_{\Omega_1} u)
\label{21003} \ee for any $\Omega_2\Subset\Omega_1$.
\end{lemma}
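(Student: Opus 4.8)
This is the quantitative form of the classical Riesz Decomposition Theorem, so the plan is to separate the qualitative decomposition from the bounds, deriving the latter from a Green's-function representation on an auxiliary domain. For existence I would take $\mu:=\frac1{2\pi}\Delta u$: subharmonicity of $u$ makes $\Delta u$ a nonnegative distribution, hence a nonnegative Radon measure, finite on every $\Omega_1\Subset\Omega$. Given $\Omega_1$, set $v(z):=\int_{\Omega_1}\log|z-\zeta|\,d\mu(\zeta)$, a logarithmic potential of a finite measure, hence subharmonic with $\Delta v=2\pi\,\mu|_{\Omega_1}$; then $h:=u-v$ has $\Delta h=2\pi\mu-2\pi\mu|_{\Omega_1}=0$ on $\Omega_1$, so by Weyl's lemma $h$ is harmonic there, and since $u$ and $v+h$ are subharmonic and agree a.e.\ they coincide, which is \eqref{eq:rieszrep}. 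Uniqueness is immediate: any representing measure $\nu$ on $\Omega_1$ satisfies $2\pi\nu=\Delta u$ on $\Omega_1$, so $\nu=\mu|_{\Omega_1}$, and letting $\Omega_1\uparrow\Omega$ pins down $\mu$.

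For the bounds I would fix, depending on $(\Omega,\Omega_1)$ only, an intermediate domain $\Omega'$ with piecewise $C^1$ boundary, $\Omega_1\Subset\Omega'\Subset\Omega$, and (by a generic choice) $u|_{\partial\Omega'}\in L^1$, and use the Poisson--Jensen representation on it: $u=H-G$ on $\Omega'$, where $H$ is the harmonic function with the same boundary values as $u$, so $H\le\sup_{\partial\Omega'}u=\sup_{\Omega'}u$ by the maximum principle (and $\sup_{\Omega_1}u\le\sup_{\Omega'}u\le\sup_\Omega u$), and $G(z)=\int_{\Omega'}g_{\Omega'}(z,\zeta)\,d\mu(\zeta)\ge0$ with $g_{\Omega'}\ge0$ the Green's function. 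Since $g_{\Omega'}$ is lower semicontinuous and strictly positive on the compact set $\overline{\Omega_1}\times\overline{\Omega_1}\subset\Omega'\times\Omega'$, it is bounded below there by some $c_0=c_0(\Omega,\Omega_1)>0$; choosing (by upper semicontinuity) $z_0\in\Omega_1$ with $u(z_0)$ as close as we wish to $\sup_{\Omega_1}u$ then gives $c_0\,\mu(\Omega_1)\le G(z_0)=H(z_0)-u(z_0)\le\sup_\Omega u-\sup_{\Omega_1}u$, which is \eqref{21002}. The same estimate with $\Omega_1$ replaced by $\Omega'$ yields $\mu(\Omega')\le C(\Omega,\Omega_1)(\sup_\Omega u-\sup_{\Omega_1}u)$, needed below.

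For \eqref{21003}, write $g_{\Omega'}(z,\zeta)=-\log|z-\zeta|+\gamma(z,\zeta)$ with $\gamma$ harmonic in each variable and bounded on $\overline{\Omega_2}\times\overline{\Omega_1}$, and recombine the two representations of $u$:
\[
h(z)=u(z)-v(z)=H(z)-\bigl(G(z)+v(z)\bigr)=H(z)-\int_{\Omega_1}\gamma(z,\zeta)\,d\mu(\zeta)-\int_{\Omega'\setminus\Omega_1}g_{\Omega'}(z,\zeta)\,d\mu(\zeta),
\]
so the logarithmic singularities of $G$ and of $v$ cancel. For $z\in\Omega_2$ the last two integrals are $\le C(\Omega,\Omega_1,\Omega_2)\,\mu(\Omega')$ ($\gamma$ is bounded there; $g_{\Omega'}(z,\cdot)$ is bounded on $\Omega'\setminus\Omega_1$ since $|z-\zeta|$ stays away from $0$), and I would bound $|H(z)-\sup_{\Omega_1}u|$ by $C(\Omega,\Omega_1,\Omega_2)(\sup_\Omega u-\sup_{\Omega_1}u)$: from above, $H(z)-\sup_{\Omega_1}u\le\sup_{\partial\Omega'}u-\sup_{\Omega_1}u\le\sup_\Omega u-\sup_{\Omega_1}u$; from below, Harnack's inequality on $\Omega'$ applied to the nonnegative harmonic function $\sup_{\partial\Omega'}u-H$ over the compact $\overline{\Omega_1}$, together with $\sup_{\partial\Omega'}u-H(z_1)\le\sup_\Omega u-u(z_1)$ at a point $z_1\in\Omega_1$ with $u(z_1)\to\sup_{\Omega_1}u$. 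Combining these with $\mu(\Omega')\le C(\sup_\Omega u-\sup_{\Omega_1}u)$ gives \eqref{21003}.

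The step I expect to be the real obstacle is dealing with the unboundedness of $u$ from below: the circle-average (Jensen) route to \eqref{21002}, or any attempt to bound $h$ by controlling the potential $v$ alone, produces a stray $-u(z_0)$ or $-v(z)$ with no a priori bound. The fix is to never isolate $v$ or the Green potential, but only ever handle the combinations $H-G$ and $H-(G+v)$, in which the harmonic part $H$ has bounded oscillation by Harnack and the potential part has total mass controlled by $\mu(\Omega')$; making $\Omega'$ a regular domain so that Poisson--Jensen and Harnack apply there is the technical heart of the argument.
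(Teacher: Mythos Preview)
The paper does not prove this lemma; it is quoted from Goldstein--Schlag \cite{GS1} and used as a black box. Your argument is correct and is essentially the standard one (and the one in \cite{GS1}): the qualitative Riesz decomposition via $\mu=\frac{1}{2\pi}\Delta u$, then the Poisson--Jensen representation $u=H-G$ on an intermediate $\Omega'$ to extract the quantitative bounds. The key observation you isolate---evaluating $G(z_0)=H(z_0)-u(z_0)$ at a near-maximizer $z_0$ of $u$ on $\Omega_1$, combined with the uniform positivity of $g_{\Omega'}$ on $\overline{\Omega_1}\times\overline{\Omega_1}$---is exactly what produces the sharp form \eqref{21002} with $\sup_{\Omega_1}u$ rather than a pointwise value or an infimum. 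Your treatment of \eqref{21003}, cancelling the logarithmic singularities between $G$ and $v$ and controlling the oscillation of $H$ by Harnack, is likewise the standard route. The only technical caveats worth flagging are ones you already anticipate: the choice of $\Omega'$ so that $u|_{\partial\Omega'}\in L^1$ (guaranteed for almost every dilate of a fixed regular domain, by Fubini applied to the locally integrable $u$), and the need for a second intermediate domain $\Omega'\Subset\Omega''\Subset\Omega$ when bounding $\mu(\Omega')$, after which $\sup_\Omega u-\sup_{\Omega'}u\le\sup_\Omega u-\sup_{\Omega_1}u$ closes the estimate.
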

\noindent
Notice that the ergodic measure for the shift on the Torus is the Lebesgue measure and $m(\mathbb{T})=1$. Then, $<u>=\int_{\mathbb{T}}u(x)dx$, and
\[\sum_{k=1}^n u(x+k\omega)-n<u>=\sum_{k=1}^n\int_{\Omega_1} \log |\{x+k\omega\}-\zeta|d\mu(\zeta)-n\int_{\Omega_1} I(\zeta)d\mu(\zeta)+\sum_{k=1}^nh(\{x+k\omega\})-n\int_0^1h(y)dy.\]
Recall that
\[\sum_{k=1}^n\int_{\Omega_1} \log |\{x+k\omega\}-\zeta|d\mu(\zeta)=\int_{\Omega_1} F_{n,\zeta}(x)d\mu(\zeta).\]
Then
\begin{eqnarray}
  \int_0^1\exp\left (\sigma |\sum_{k=1}^n u(x+k\omega)-n<u>|\right )dx & \leq &\left [\int_0^1\exp\left (\sigma \left |\int_{\Omega_1} (F_{n,\zeta}(x)-n I(\zeta))d\mu(\zeta)\right |\right )dx\right ]^{\frac{1}{2}}\nonumber\\
   &&\ \ \times \left [\int_0^1\exp\left (\sigma \left |\sum_{k=1}^nh(\{x+k\omega\})-n\int_0^1h(y)dy\right |\right )dx\right ]^{\frac{1}{2}}\nonumber.
\end{eqnarray}
Since $\exp(\sigma \cdot)$ is a convex function, the Jensen's inequality implies that
\begin{eqnarray}
  \int_0^1\exp\left (\sigma \left |\int_{\Omega_1} (F_{n,\zeta}(x)-n I(\zeta))d\mu(\zeta)\right |\right )dx &\leq & \int_0^1\int_{\Omega_1}\exp\left (\sigma \mu(\Omega_1)\left | F_{n,\zeta}(x)-n I(\zeta)\right |\right )\frac{d\mu(\zeta)}{\mu(\Omega_1)}dx\nonumber\\
  &=&\int_{\Omega_1}\int_0^1\exp\left (\sigma \mu(\Omega_1)\left | F_{n,\zeta}(x)-n I(\zeta)\right |\right )dx\frac{d\mu(\zeta)}{\mu(\Omega_1)}\nonumber\\
  &\leq & \int \exp(C\sigma \mu(\Omega_1)\beta n)\frac{d\mu(\zeta)}{\mu(\Omega_1)}\leq \exp(C\sigma \mu(\Omega_1)\beta n).\nonumber
 \end{eqnarray}
Thus, combining it with (\ref{12014}), we have for any $0<\sigma\leq \frac{c_0}{\mu(\Omega_1)}$ and  $\omega$,
\[\int_0^1\exp\left (\sigma |\sum_{k=1}^n u(x+k\omega)-n<u>|\right )dx< \exp(C\sigma \mu(\Omega_1)\beta n).\]
Recall the Markov's inequality: For any  measurable extended real-valued function $f(x)$ and $\epsilon >0$,we have
\[\mes \left (\{x\in \mathbb{X}:|f(x)|\ge \epsilon\} \right ) \leq \frac{1}{\epsilon}\int_{\mathbb{X}} |f|dx.\]
Let $f(x)=\exp\left (\sigma |\sum_{k=1}^n u(x+k\omega)-n<u>|\right )$ and $\epsilon=\exp(\sigma \delta n)$, then
\begin{eqnarray}
 \mes\left (\left \{x\in \mathbb{X}:|\sum_{k=1}^n u(x+k\omega)-n<u>|>\delta n\right \}\right )&=& \mes \left (\left \{x\in \mathbb{X}:\exp\left (\sigma |\sum_{k=1}^n u(x+k\omega)-n<u>|\right ) \ge \exp(\sigma \delta n)\right \} \right )\nonumber\\
 &\leq & \exp(-\sigma \delta n+C\sigma \mu(\Omega_1)\beta n)
\end{eqnarray}
Therefore,  if $\beta<\frac{\delta}{2C\mu(\Omega_1)}$, then we complete the proof of Theorem \ref{erg}.
\begin{remark}
  \label{barbeta}
  In this proof, we use the constant $\beta$ only to make the inequality $\log q_{s+1}<\beta q_s$ hold for any $s>0$. Thus, if we change $\beta$ by $\bar\beta$, then for large $n$, this inequality and Theorem \ref{erg} still hold. But in this condition, the integer $m$ and the constant $c$ will depend on $\bar\beta$.
\end{remark}

\section{Positive Lyapunov Exponents and Strong Birkhoof Ergodic Theorem for Jacobi Operators}
The fact that the determinant of  $M^s_n(x,E,\omega)$, which is the transfer matrix of the Schr\"odinger operators and analytic in $x$, is always $1$ makes $u_n^s(x,E,\omega)=\frac{1}{n}\log\|M^s_n(x,E,\omega)\|$ have the upper and lower bounds. Correspondingly, $M_n(x,E,\omega)$ is the one of the Jacobi operators. But it is not analytic, and  its determinant and the  logarithm of its norm have no  bounds. Therefore, in the introduction  we define the matrix $M^a_n(x,E,\omega)$, which is analytic, so that Theorem \ref{erg} can  be applied to it in this section. We also define the matrix $M^u_n(x,E,\omega)$, whose determinant is $1$, so that the Avalanche Principle can  be applied to it in Section $5$ and $6$. On the other hand, what we want to present in the theorems is the fine properties associated with $M_n(x,E,\omega)$, such as  Theorem \ref{ldt} to \ref{larholder}. Therefore, in the rest of this paper, we need to transform these matrices into each another very often. We hope that these explanations can alleviate readers' confusion caused by these multiple matrices.

Theorem \ref{posile} and Lemma \ref{muomu} we prove in this section  have  similar versions for the Schr\"odinger operators (\ref{schequ}) in \cite{SS} and \cite{HZ} and there is no essential difference in their proofs. The reason why we do not omit this part is that  the details especially the settings of $\lambda_i$ are very important to our paper.~\\

Now we start to  study the Lyapunov exponents of the  analytic quasi-periodic Jacobi cocycles.
Choose $\Omega$ in Lemma \ref{lem:riesz} as (\ref{Omega}) and let $\lambda_v>\lambda_1:=\frac{\lambda_a\|a\|_{L^{\infty}(\Omega)}}{\|v\|_{L^{\infty}(\Omega)}}$. Then
\begin{equation}\label{upb}
  \sup\limits_{E\in\mathscr{E},z\in\Omega}u^a_n(z,E,\omega)\leq \log(5\lambda_v\|v\|_{L^{\infty}(\Omega)}).
\end{equation}
And if $\lambda_v>\lambda_2:=\left(5\|v\|_{L^{\infty}(\Omega)}\right)^{\frac{1}{\gamma}}$, then
\begin{equation}\label{suplea}
  L^a(E,\omega)\leq \sup\limits_{E\in\mathscr{E},z\in\Omega}u^a_n(z,E,\omega)\leq (1+\gamma )\log\lambda_v.
\end{equation}
By the way, it is easy to see that if $\lambda_v>\max(\lambda_2,\lambda_D)$, where $\lambda_D=\frac{\left(\lambda_a\|a\|_{L^{\infty}(\Omega)}\right)^{\frac{2}{\gamma}}}{\|v\|_{L^{\infty}(\Omega)}}$, then
\begin{equation}\label{supd}
D\leq \log\left(\lambda_a\|a\|_{L^{\infty}_{\mathbb{T}}}\right)\leq \log\left(\lambda_v\|v\|_{L^{\infty}_{\mathbb{T}}}\right)^{\frac{\gamma}{2}}\leq \frac{\gamma}{2}\log\lambda_v+\frac{\gamma^2}{2}\lambda_v<\gamma\log\lambda_v.
\end{equation}~\\

To estimate the lower bound of $\sup\limits_{z\in\Omega_1}u^a_n(z)$, we need the following lemma for the complex analytic function $v(z)$:
\begin{lemma}[Lemma 14.5 in \cite{BG}]\label{BG}
  For all $0<\delta <\rho$, there is  an $\epsilon_0=\epsilon_0(v)$ such that
  \[
    \inf_{E_1}\sup_{\frac{\delta}{2}<y<\delta}\inf_{x\in [0,1]}|v(x+iy)-E_1|>\epsilon_0.
  \]
\end{lemma}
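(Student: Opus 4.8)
The plan is to argue by contradiction, exploiting that $v$ is a \emph{non-constant} real analytic function whose holomorphic extension lives on the connected strip $\mathbb{T}_\rho$ (if $v$ were constant the statement would be false, so non-constancy is tacit in the standing hypotheses on the potential). Fix $\delta\in(0,\rho)$; the argument in fact produces $\epsilon_0=\epsilon_0(v,\delta)$, the dependence on $\delta$ being suppressed because $\delta$ is fixed once and for all. First I would set
\[
 g(E_1,y):=\inf_{x\in[0,1]}|v(x+iy)-E_1|,\qquad G(E_1):=\sup_{\frac{\delta}{2}<y<\delta} g(E_1,y),
\]
so that the claim becomes $\inf_{E_1\in\IC}G(E_1)>0$. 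Here $K:=v\bigl(\{z:0\le\Re z\le1,\ \tfrac{\delta}{2}\le\Im z\le\delta\}\bigr)$ is compact (continuous image of a compact set), so $G(E_1)\ge\dist(E_1,K)$; and since $E_1\mapsto g(E_1,y)$ is $1$-Lipschitz, so is $G$. Thus $G$ is continuous and coercive, hence attains its infimum at some $E_1^\ast\in\IC$.

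The heart of the argument is then the following: suppose, for contradiction, that $G(E_1^\ast)=0$. Since $g\ge0$ and $\sup_{\frac{\delta}{2}<y<\delta} g(E_1^\ast,y)=0$, we get $g(E_1^\ast,y)=0$ for \emph{every} $y\in(\tfrac\delta2,\delta)$; the infimum defining $g(E_1^\ast,y)$ is attained by compactness of $[0,1]$, so for each such $y$ there is $x_y\in[0,1]$ with $v(x_y+iy)=E_1^\ast$. The points $z_y:=x_y+iy$ are pairwise distinct (distinct imaginary parts) and all lie in a compact subset of $\mathbb{T}_\rho$, so the zero set of the holomorphic function $v-E_1^\ast$ is infinite with an accumulation point in $\mathbb{T}_\rho$; by the identity theorem $v\equiv E_1^\ast$ on the connected domain $\mathbb{T}_\rho$, contradicting non-constancy. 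Hence $\inf_{E_1}G(E_1)=G(E_1^\ast)>0$ and one takes $\epsilon_0:=\tfrac12 G(E_1^\ast)$, which depends only on $v$ and the fixed $\delta$.

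I expect the only genuinely delicate point to be the passage to a worst-case parameter $E_1$ over the non-compact space $\IC$; this is disposed of by the bound $G(E_1)\ge\dist(E_1,K)$, which forces $G$ to be coercive and hence to attain its infimum at a finite point. Everything else reduces to the elementary observation that a holomorphic function on a strip which takes a fixed value on a whole continuum of horizontal slices must take that value on a set with an interior accumulation point, hence be constant — which is exactly what the failure of the desired uniform lower bound would supply.
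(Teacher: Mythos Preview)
Your argument is correct. The paper does not give its own proof of this lemma; it simply quotes it as Lemma~14.5 of \cite{BG} and uses it as a black box. Your contradiction argument via the identity theorem --- reducing to a minimizer $E_1^\ast$ of the coercive, Lipschitz function $G$, and then observing that $G(E_1^\ast)=0$ would force $v-E_1^\ast$ to vanish on an uncountable set of points $\{x_y+iy:\ \tfrac{\delta}{2}<y<\delta\}$ accumulating inside the strip --- is a clean and self-contained proof of the statement, and in fact is essentially the same idea as in \cite{BG}. Two minor remarks: your $\epsilon_0$ visibly depends on $\delta$ as well as on $v$ (as you note), which is harmless since in the paper $\delta$ is fixed in terms of $\rho$; and the tacit hypothesis that $v$ is non-constant is indeed standing throughout the paper, since otherwise neither this lemma nor the positivity results it feeds into could hold.
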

\noindent Therefore, for any $E$,  $\lambda_v$ and  $0<\delta<\rho$, there is $\frac{\delta}{2}<y_0<\delta$ such that
\[
  \inf_{x\in[0,1]}\left |\lambda_v v(x+iy_0)-E\right |>\lambda_v \epsilon_0.
\]
Let
\begin{equation}\label{gh}
 M^a_{n-1}(x+iy_0,E,\omega)\left (\begin{array}
  {cc} 1\\ 0
\end{array}\right )=\left (\begin{array}
  {cc} g_{n-1}\\ h_{n-1}
\end{array}\right ).
\end{equation}
Then
\begin{eqnarray}\label{trgh}\left (\begin{array}
  {cc} g_n\\ h_n
\end{array}\right )&=&\left (\begin{array}
  {cc} \lambda_v v\left (x+iy+(n-1)\omega\right )-E &-\lambda_a\tilde a(x+iy+(n-1)\omega)\\ \lambda_a a(x+iy+n\omega)&0\\
\end{array}\right )\left (\begin{array}
  {cc} g_{n-1}\\ h_{n-1}
\end{array}\right )\\
&=&\left (\begin{array}
  {cc} \left ( \lambda_v v\left (x+iy+(n-1)\omega\right )-E\right )g_{n-1}-\lambda_a\tilde a(x+iy+(n-1)\omega)h_{n-1}\\ \lambda_a a(x+iy+n\omega)g_{n-1}
\end{array}\right ).\nonumber\end{eqnarray}
Set $\lambda_3=\lambda_3(v,\lambda_a,a)=2\lambda_a\|a\|_{L^{\infty}(\mathbb{T})}\epsilon^{-1}_0$. If $\lambda_v>\lambda_3$, then for any $E\in\mathscr{E}$, it implies
\[
  \inf_{x\in[0,1]}\left |\lambda_v v(x+iy_0)-E\right |>\lambda_v\epsilon_0>2\lambda_a\|a\|_{L^{\infty}(\mathbb{T})}.
\]
Now we use the induction to show that
\begin{equation}\nonumber
 |g_n|\ge |h_n|, \ \ \  \mbox{and}\ \ \  |g_n|\ge (\lambda_v\epsilon_0-\lambda_a\|a\|_{L^{\infty}(\mathbb{T})})|g_{n-1}|\ge (\lambda_v\epsilon_0-\lambda_a\|a\|_{L^{\infty}(\mathbb{T})})^n,\ \ n\ge 1.
\end{equation}
Due to (\ref{gh}) and (\ref{trgh}), it has that $g_0=1,\  h_0=0$  and
\[|g_1|=\left|\lambda_v v\left (x+iy\right )-E\right|>\lambda_v\epsilon_0,\ \ |h_1|=|\lambda_a a(x+n\omega)|\le\lambda_a\|a\|_{L^{\infty}(\mathbb{T})}.\]
Let $|g_t|\ge |h_t|$ and  $|g_t|>(\lambda\epsilon -\lambda_a\|a\|_{L^{\infty}(\mathbb{T})})|g_{t-1}|>(\lambda\epsilon_0-\lambda_a\|a\|_{L^{\infty}(\mathbb{T})})^t$. Then, we finish this induction by
\[
  |g_{t+1}|\ge \left|\left ( \lambda_v v\left (x+iy+t\omega\right )-E\right )g_{t}\right|-\left|\lambda_a\tilde a(x+iy+t\omega)h_{t}\right|>\left (\lambda_v\epsilon-\lambda_a\|a\|_{L^{\infty}(\mathbb{T})}\right )^{t+1},\]
and \[
  |h_{t+1}|\leq |\lambda_a a(x+iy+n\omega)g_t|<\lambda_a\|a\|_{L^{\infty}(\mathbb{T})}|g_t|\leq |g_{t+1}|.
\]
Therefore, we  have
\begin{equation}\nonumber
 \|M^a_n(x+iy_0,E,\omega)\|\ge \left|\left<M^a_n(x+iy_0,E,\omega)\left(\begin{array}{cc}
1\\ 0\\
\end{array}\right),\left(\begin{array}{cc}
1\\ 0\\
\end{array}\right)\right>\right|=|g_n|>(\lambda_v\epsilon_0-\lambda_a\|a\|_{L^{\infty}(\mathbb{T})})^n\ge \left(\frac{1}{2}\lambda_v \epsilon_0\right)^n.
\end{equation} It implies that
\begin{equation}\nonumber
  u_n^a(x+iy_0,E,\omega)=\frac{1}{n}\log\|M^a_n(x+iy_0,E,\omega)\|\ge \log\left(\frac{1}{2}\lambda_v\epsilon_0\right).
\end{equation}

Write $\mathbb{H}=\{z:\Im z>0\}$ for the upper half-plane and $\mathbb{H}_s$ for the strip $\{z=x+iy:0<y<\frac{\rho}{5}\}$. Then
denote by $\mu(z,\mathbb{E},\mathbb{H})$ the harmonic measure of $\mathbb{E}$ at $z\in\mathbb{H}$ and $\mu_s(iy_0,\mathbb{E}_s,\mathbb{H}_s)$ the harmonic measure of $\mathbb{E}_s$ at $iy_0\in\mathbb{H}_s$, where $\mathbb{E}\subset\partial \mathbb{H}=\mathbb{R}$ and $\mathbb{E}_s\subset \partial \mathbb{H}_s=\mathbb{R}\bigcup [y=\frac{\rho}{5}]$. Note that $\psi(z)=\exp\left (\frac{5\pi}{\rho}z\right )$ is a conformal map from $\mathbb{H}_s$ onto $\mathbb{H}$. Due to \cite{GM}, we have
\[
  \mu_s(iy_0,\mathbb{E}_s,\mathbb{H}_s)\equiv \mu(\psi(iy_0),\psi(\mathbb{E}_s),\mathbb{H}),
\]
and
\[
  \mu(z=x+iy,\mathbb{E},\mathbb{H})=\int_{\mathbb{E}}\frac{y}{(t-x)^2+y^2}\frac{dt}{\pi}.
\]
Thus
\[\mu_s[y=\frac{\rho}{5}]=\frac{5\pi y_0}{\pi\rho}<\frac{5\delta}{\rho}.\]
By the subharmonicity and (\ref{upb}), it yields that if $\lambda_v>\max(\lambda_1,\lambda_3)$, then
\begin{eqnarray}
  \log \left(\frac{1}{2}\lambda_v \epsilon_0\right)<u^a_n(iy_0,E,\omega)& \leq &\int_{[y=0]\bigcup [y=\frac{\rho}{5}]}u^a_n(z,E,\omega)\mu_s(dz)\nonumber\\
  &=&\int_{y=0}u^a_n(x,E,\omega)\mu_s(dx)+\int_{y=\frac{\rho}{5}}u^a_n(x+iy,E,\omega)\mu_s(dx)\nonumber\\
  &\leq &\int_{\mathbb{R}}u^a_n(x,E,\omega)\mu_s(dx)+\frac{5\delta}{\rho}\left [\sup_{y=\frac{\rho}{5}}u^a_n(x+iy,E,\omega)\right ]\nonumber\\
  &\leq &\int_{\mathbb{R}}u^a_n(x,E,\omega)\mu_s(dx)+\frac{5\delta}{\rho}\log\left(5\lambda_v\|v\|_{L^{\infty}(\Omega)}\right)\nonumber.
\end{eqnarray}
So, if $\delta<\frac{\gamma\rho}{10}$ and $\lambda_v>\lambda_4:=5\|v\|_{L^{\infty}(\Omega)}\left(\frac{2}{\epsilon_0}\right)^{\frac{2}{\gamma}}$, then
\begin{equation}\label{es-un}
  \int_{\mathbb{R}}u_n(x,E,\omega)\mu_s(dx)\ge \log \left(\frac{1}{2}\lambda_v \epsilon_0\right)-\frac{5\delta}{\rho}\log\left(5\lambda_v\|v\|_{L^{\infty}(\Omega)}\right)>\left (1-\gamma\right )\log \lambda_v.
\end{equation}
Set
\[u^h_n(x)=u^a_n(x+h),\ \ h\in \mathbb{T}.\]
Then, due to Lemma \ref{BG}, it is obvious that (\ref{es-un}) also holds for $u^h_n(x)$. So, for any $h\in\mathbb{T}$, it has
\[\int_{\mathbb{R}}u^a_n(x+h)\mu_s(dx)> \left (1-\gamma\right )\log \lambda_v.\]
Integrating in $h\in \mathbb{T}$ implies that
\begin{eqnarray}
  L^a_n(E,\omega)=\int_0^1u^a_n(x+h,E,\omega)dh&\ge & \left (\int_{\mathbb{R}}\mu_s(dx)\right )\times\left ( \int_0^1u^a_n(x+h,E,\omega)dh\right )\nonumber \\
  &=&\int_0^1\int_{\mathbb{R}}u_n(x+h,E,\omega)\mu_s(dx)dxdh \nonumber\\
  &>&\left (1-\gamma\right )\log \lambda_v,\ \ \forall n\ge 0.\nonumber
\end{eqnarray}
Thus,  combining it with (\ref{suplea}) and (\ref{supd}), we  finish the proof of Theorem \ref{posile} with  $n\to +\infty$ and
\begin{equation}\label{lambda0}
  \lambda_v>\lambda_p:=\max\left(5\|v\|_{L^{\infty}(\mathbb{T})}^{\frac{1}{\gamma}},2\lambda_a\|a\|_{L^{\infty}(\mathbb{T})},
  \frac{\left(\lambda_a\|a\|_{L^{\infty}(\Omega)}\right)^{\frac{2}{\gamma}}}{\|v\|_{L^{\infty}(\Omega)}}\right)\left(\frac{2}{\epsilon_0}\right)^{\frac{2}{\gamma}}
  >\max(\lambda_1,\lambda_2,\lambda_3,\lambda_4,\lambda_D).
\end{equation}
\begin{remark}
  \label{schpole}
  For the Schr\"odinger operators,
  due to Lemma \ref{BG}, $\|v\|_{L^{\infty}(\Omega)}>\epsilon_0$. Therefore, there exists $\lambda_p^s:=\left(\frac{20\|v\|_{L^{\infty}(\Omega)}}{\epsilon_0^2}\right)^{\frac{1}{\gamma}}$ such that if $\lambda_s>\lambda_p^s$, then  $L^s(E,\omega)$ is positive for any irrational $\omega$ and  $E\in\mathcal{E}_s$ as follows:
  \[(1-\gamma)\log\lambda_s<L_n^s(E,\omega)<(1+\gamma)\log\lambda_s,\ \ \forall n\ge 1,\]
  where $\mathcal{E}_s:=\left[-2-\lambda\|v\|_{L^{\infty}(\Omega)},2+\lambda\|v\|_{L^{\infty}(\Omega)}\right]$. What's more, for any irrational $\omega$ , $E\in\mathcal{E}_s$ and  $x\in\mathbb{T}$,
  \[u_n^s(x,E,\omega)\leq M^s_0\leq (1+\gamma)\log\lambda.\]
\end{remark}
~\\

On the other hand, if we choose $
   \Omega_1$  in Lemma \ref{lem:riesz} as  (\ref{Omega1}) and $\delta=\frac{\rho}{2}$ in Lemma \ref{BG}, then
\[\sup\limits_{E\in\mathscr{E},z\in\Omega_1}u^a_n(z,E,\omega)\ge \log\left(\frac{1}{2}\lambda_v\epsilon_0\right),\ \ \lambda_v>\lambda_3.\]
Combining it with (\ref{21002}) and (\ref{upb}), we  have the following lemma:
\begin{lemma}\label{muomu}
 There exist $\lambda_0=\lambda_0(v,\lambda_a,a):=\max(\lambda_1,\lambda_3)$ and $C_v=C_v(v)=C(\Omega,\Omega_1)\log\frac{10\|v\|_{L^{\infty}(\Omega)}}{\epsilon_0}$ such that for $\lambda_v>\lambda_0$,$$\mu_u(\Omega_1)\leq C_v,$$
 where $\mu_u$ is the unique measure for $u^a_n(z,E,\omega)$ in Lemma \ref{lem:riesz}.
\end{lemma}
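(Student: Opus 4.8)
The plan is to apply the quantitative bound \eqref{21002} from the Riesz representation (Lemma \ref{lem:riesz}) to the subharmonic function $u_n^a(\cdot,E,\omega)$ on the specific pair $\Omega\Subset\Omega_1$ chosen in \eqref{Omega} and \eqref{Omega1}. That bound reads
\[
\mu_u(\Omega_1)\le C(\Omega,\Omega_1)\Bigl(\sup_{\Omega}u_n^a(\cdot,E,\omega)-\sup_{\Omega_1}u_n^a(\cdot,E,\omega)\Bigr),
\]
so the task reduces to an upper bound for $\sup_{\Omega}u_n^a$ and a lower bound for $\sup_{\Omega_1}u_n^a$, both uniform in $n\ge1$, $E\in\mathscr E$, and $\omega$, under the hypothesis $\lambda_v>\lambda_0:=\max(\lambda_1,\lambda_3)$.

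The upper bound is already in hand: assuming $\lambda_v>\lambda_1=\lambda_a\|a\|_{L^\infty(\Omega)}/\|v\|_{L^\infty(\Omega)}$, inequality \eqref{upb} gives
\[
\sup_{E\in\mathscr E,\,z\in\Omega}u_n^a(z,E,\omega)\le\log\bigl(5\lambda_v\|v\|_{L^\infty(\Omega)}\bigr).
\]
For the lower bound on $\sup_{\Omega_1}u_n^a$, I would reuse the induction carried out earlier in this section: taking $\delta=\rho/2$ in Lemma \ref{BG}, one obtains a height $\rho/4<y_0<\rho/2$ with $\inf_{x\in[0,1]}|\lambda_v v(x+iy_0)-E|>\lambda_v\epsilon_0$, and when $\lambda_v>\lambda_3=2\lambda_a\|a\|_{L^\infty(\mathbb T)}\epsilon_0^{-1}$ the growth estimate $|g_n|\ge(\lambda_v\epsilon_0-\lambda_a\|a\|_{L^\infty(\mathbb T)})^n\ge(\tfrac12\lambda_v\epsilon_0)^n$ yields $\|M_n^a(x+iy_0,E,\omega)\|\ge(\tfrac12\lambda_v\epsilon_0)^n$, hence $u_n^a(x+iy_0,E,\omega)\ge\log(\tfrac12\lambda_v\epsilon_0)$. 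Since $x+iy_0$ can be chosen in the real part range $|\Re z|<\tfrac23$ and $\rho/4<y_0<\rho/2$ places it inside $\Omega_1$ as defined by \eqref{Omega1}, we get $\sup_{E\in\mathscr E,\,z\in\Omega_1}u_n^a(z,E,\omega)\ge\log(\tfrac12\lambda_v\epsilon_0)$ for all $\lambda_v>\lambda_3$ and all $n\ge1$. This is exactly the displayed inequality just before the lemma statement.

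Combining the two estimates, for $\lambda_v>\lambda_0=\max(\lambda_1,\lambda_3)$,
\[
\mu_u(\Omega_1)\le C(\Omega,\Omega_1)\Bigl(\log\bigl(5\lambda_v\|v\|_{L^\infty(\Omega)}\bigr)-\log\bigl(\tfrac12\lambda_v\epsilon_0\bigr)\Bigr)=C(\Omega,\Omega_1)\log\frac{10\|v\|_{L^\infty(\Omega)}}{\epsilon_0}=:C_v,
\]
where crucially the $\lambda_v$ factors cancel, so $C_v=C_v(v)$ depends only on $v$ (through $\|v\|_{L^\infty(\Omega)}$ and $\epsilon_0=\epsilon_0(v)$) and on the fixed geometry $\Omega,\Omega_1$. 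This is the claimed bound.

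The only point needing care — and the one I would flag as the main obstacle — is verifying that the lower bound holds \emph{uniformly in $n$, $E\in\mathscr E$, and $\omega$} and that the witnessing point genuinely lies in $\Omega_1$: one must check that Lemma \ref{BG} produces $y_0$ independent of $E$ and $\lambda_v$ (it does, since the infimum over $E_1$ is taken inside the statement), that the induction step $|g_{t+1}|>(\lambda_v\epsilon_0-\lambda_a\|a\|_{L^\infty(\mathbb T)})^{t+1}$ is valid for every $E\in\mathscr E$ (guaranteed by $\lambda_v>\lambda_3$), and that the estimate is $\omega$-free (it is, since $|a(z+j\omega)|$ and $|\tilde a(z+j\omega)|$ are bounded by $\|a\|_{L^\infty}$ uniformly in $\omega$ for $z$ in the relevant strip). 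Once these uniformities are confirmed, the application of \eqref{21002} is immediate and the proof is complete.
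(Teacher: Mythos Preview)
Your proof is correct and follows exactly the paper's approach: apply the Riesz bound \eqref{21002} with the upper bound \eqref{upb} for $\sup_\Omega u_n^a$ (requiring $\lambda_v>\lambda_1$) and the lower bound $\sup_{\Omega_1}u_n^a\ge\log(\tfrac12\lambda_v\epsilon_0)$ obtained by choosing $\delta=\rho/2$ in Lemma~\ref{BG} so that $y_0\in(\rho/4,\rho/2)\subset\Omega_1$ (requiring $\lambda_v>\lambda_3$), and observe that the $\lambda_v$'s cancel in the difference. Your attention to the uniformity in $n,E,\omega$ and to the location of the witnessing point inside $\Omega_1$ is well placed and matches what the paper uses implicitly.
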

\noindent Thus, Theorem \ref{erg} can be applied to $u_n^a(x,E,\omega)$ as follows:
\begin{lemma}\label{erga}
There exists $c_v=c_v(v):=\frac{1}{2CC_v}$ such that if $\beta<c_v\delta$ and $\lambda_v>\lambda_0$, then for any positive $k$ and $n$,
\[\mes\left (\left \{x\in \mathbb{T}:|\frac{1}{k}\sum_{j=1}^k u^a_n(x+j\omega)-L^a_n(E,\omega)|>\delta \right \}\right )<\exp(-\bar c_v\delta k),\]
where $\bar c_v=\frac{c}{C_v}$.
\end{lemma}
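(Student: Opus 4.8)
The plan is to apply Theorem \ref{erg} directly to the subharmonic function $u_n^a(\cdot,E,\omega)$ on the domain $\Omega$ defined in \eqref{Omega}, with the subregion $\Omega_1$ as in \eqref{Omega1}. First I would observe that $u_n^a(z,E,\omega)$ is subharmonic on $\Omega$ (since $\|M_n^a\|$ is a norm of a matrix whose entries are analytic in $z$, by the extension formulas \eqref{de2}), and that $\mathbb{T}\subset\Omega_1\Subset\Omega$, so the hypotheses of Theorem \ref{erg} are met. The unique Riesz measure $\mu_u$ for $u_n^a$ on $\Omega_1$ satisfies the bound $\mu_u(\Omega_1)\le C_v$ from Lemma \ref{muomu}, provided $\lambda_v>\lambda_0=\max(\lambda_1,\lambda_3)$. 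The constant $C$ in Theorem \ref{erg} is the $C=C(\Omega_1)$ there; since $\Omega,\Omega_1$ are now fixed absolute domains, $C$ is an absolute constant, and the absolute constant $c$ from Theorem \ref{erg} is the one appearing in the exponent.

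Next I would check the quantitative hypothesis of Theorem \ref{erg}, namely $\beta(\omega)<\frac{\delta}{C\mu_u(\Omega_1)}$. Because $\mu_u(\Omega_1)\le C_v$, it suffices to require $\beta(\omega)<\frac{\delta}{CC_v}$, and since $c_v:=\frac{1}{2CC_v}$ we have that the hypothesis $\beta<c_v\delta$ implies $\beta<\frac{\delta}{2CC_v}\le\frac{\delta}{C\mu_u(\Omega_1)}$, as needed. (The factor $2$ is exactly the slack that appears in the last line of the proof of Theorem \ref{erg}, where the condition $\beta<\frac{\delta}{2C\mu(\Omega_1)}$ is used.) Then Theorem \ref{erg} gives, for every positive integer $m$,
\[
\mes\left(\left\{x\in\mathbb{T}:\Big|\tfrac1m\sum_{j=1}^m u_n^a(x+j\omega,E,\omega)-\langle u_n^a\rangle\Big|>\delta\right\}\right)<\exp\!\left(-\tfrac{c}{\mu_u(\Omega_1)}\delta m\right).
\]
Since $\langle u_n^a\rangle=\int_{\mathbb{T}}u_n^a(x,E,\omega)\,dx=L_n^a(E,\omega)$ by definition, and since $\mu_u(\Omega_1)\le C_v$ makes the exponent at most $-\frac{c}{C_v}\delta m$, setting $m=k$ and $\bar c_v:=\frac{c}{C_v}$ yields exactly the claimed bound.

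I do not expect a genuine obstacle here: the lemma is essentially a specialization of Theorem \ref{erg} once the two inputs — subharmonicity of $u_n^a$ on the fixed domain $\Omega$, and the uniform bound $\mu_u(\Omega_1)\le C_v$ from Lemma \ref{muomu} — are in place. The only point requiring a little care is the bookkeeping of constants: one must keep $C$ and $c$ as the \emph{absolute} constants coming from Theorem \ref{erg} with the now-fixed $\Omega_1$, track that the factor-of-two in $c_v=\frac{1}{2CC_v}$ absorbs the slack in Theorem \ref{erg}'s hypothesis, and note that the bound holds uniformly in $E\in\mathscr{E}$ and in $n$ because Lemma \ref{muomu} is uniform in those parameters (the estimate $\sup_\Omega u_n^a-\sup_{\Omega_1}u_n^a\le\log\frac{10\|v\|_{L^\infty(\Omega)}}{\epsilon_0}$ does not depend on $n$ or $E$). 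Finally, I would remark that the statement is vacuous unless $k$ is large enough that $\exp(-\bar c_v\delta k)<1$, but no lower bound on $k$ or $n$ is actually needed for the inequality as stated.
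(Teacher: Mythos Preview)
Your proposal is correct and follows exactly the paper's approach: the paper simply states that ``Theorem \ref{erg} can be applied to $u_n^a(x,E,\omega)$'' after establishing the Riesz-measure bound $\mu_u(\Omega_1)\le C_v$ in Lemma \ref{muomu}, and your write-up spells out precisely this application with the correct bookkeeping of constants.
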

\begin{remark}\label{ergs}
  For the Schr\"odinger operators($\lambda_aa\equiv1$), we have that if $\beta<c_s\delta$ and $\lambda_s>\lambda_0^s(v):=2\epsilon_0^{-1}$, then for any positive $k$ and $n$,
  \[\mes\left (\left \{x\in \mathbb{T}:|\frac{1}{k}\sum_{j=1}^k u^s_n(x+j\omega)-L^s_n(E,\omega)|>\delta \right \}\right )<\exp(-\bar c_v\delta k).\]
\end{remark}

Similar computations show that for any $\lambda_a\not=0$,
\[\mu_d(\Omega_1)\leq C(\Omega,\Omega_1)\log\frac{\|a\|_{L^{\infty}(\Omega)}}{\|a\|_{L^{\infty}(\Omega_1)}}:=C_a,\]
where $\mu_d$ is the unique measure in Lemma \ref{lem:riesz} for $d(z,\omega)$ defined in (\ref{defd}). Correspondingly, the following two strong Birkhoff Ergodic Theorems  both hold:
\begin{lemma}\label{ldtd}
There exists $c_a=c_a(a)=\frac{1}{2CC_a}$  such that if $\beta<c_a\delta$, then for any positive $k$,
\[\mes\left (\left \{x\in \mathbb{T}:\left|\frac{1}{k}\sum_{j=1}^k \log|a(x+j\omega)|-D\right|>\delta \right \}\right )<\exp(-\bar c_a\delta k),\]
and
\[\mes\left (\left \{x\in \mathbb{T}:\left|\frac{1}{k}\sum_{j=1}^k d(x+j\omega,\omega)-2D\right|>\delta \right \}\right )<\exp(-\bar c_a\delta k),\]
where $\bar c_a=\frac{c}{C_a}$.
\end{lemma}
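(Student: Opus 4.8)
The plan is to derive the statement directly from Theorem \ref{erg}, exactly as in the proof of Lemma \ref{erga}; the whole content is to exhibit the two relevant subharmonic functions, compute their space averages, and bound their Riesz masses on $\Omega_1$.

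First I would fix the two subharmonic functions on the domain $\Omega$ of (\ref{Omega}), namely $u_1(z):=\log|\lambda_a a(z)|$ and $u_2(z):=d(z,\omega)=\log|\lambda_a^2 a(z+\omega)\tilde a(z)|$. Since $a$ is analytic and not identically zero on $\mathbb{T}_\rho\supset\Omega$, and $\tilde a$ is analytic and not identically zero on $\mathbb{T}_\rho$ by construction, the functions $z\mapsto\log|a(z)|$, $z\mapsto\log|a(z+\omega)|$ and $z\mapsto\log|\tilde a(z)|$ are subharmonic on $\Omega$; hence $u_1$ and $u_2$ (the latter a sum of two of these plus the constant $2\log\lambda_a$) are subharmonic there, and by (\ref{Omega}) and (\ref{Omega1}) we have $\mathbb{T}\subset\Omega_1\Subset\Omega$, so Theorem \ref{erg} applies to each $u_i$. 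The space averages come straight from the definition of $D$: $\langle u_1\rangle=\int_{\mathbb{T}}\log|\lambda_a a(x)|\,dx=D$, while, using $|\tilde a(x)|=|a(x)|$ on $\mathbb{T}$ and the translation invariance of Lebesgue measure, $\langle u_2\rangle=\int_{\mathbb{T}}\log|\lambda_a a(x+\omega)|\,dx+\int_{\mathbb{T}}\log|\lambda_a\tilde a(x)|\,dx=D+D=2D$.

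Next I would record the Riesz-mass bounds. For $u_2$ this is precisely the estimate $\mu_d(\Omega_1)\le C_a$ established immediately before the statement. For $u_1$, inequality (\ref{21002}) of Lemma \ref{lem:riesz} gives $\mu_{u_1}(\Omega_1)\le C(\Omega,\Omega_1)\left(\sup_{\Omega}u_1-\sup_{\Omega_1}u_1\right)$; the additive constant $\log\lambda_a$ cancels in the difference, so $\sup_{\Omega}u_1-\sup_{\Omega_1}u_1=\log\|a\|_{L^\infty(\Omega)}-\log\|a\|_{L^\infty(\Omega_1)}$ and hence $\mu_{u_1}(\Omega_1)\le C(\Omega,\Omega_1)\log\frac{\|a\|_{L^\infty(\Omega)}}{\|a\|_{L^\infty(\Omega_1)}}=C_a$ as well. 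This cancellation is exactly why $C_a$, and therefore $c_a$ and $\bar c_a$, depend only on $a$ and not on $\lambda_a$.

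Finally, with $c_a=\frac{1}{2CC_a}$ and the hypothesis $\beta(\omega)<c_a\delta$, one has $\beta(\omega)<\frac{\delta}{2CC_a}<\frac{\delta}{CC_a}\le\frac{\delta}{C\mu_{u_i}(\Omega_1)}$ for $i=1,2$, so Theorem \ref{erg} (applied with its parameter $n$ set equal to $k$) yields
\[\mes\left(\left\{x\in\mathbb{T}:\left|\frac{1}{k}\sum_{j=1}^{k}u_i(x+j\omega)-\langle u_i\rangle\right|>\delta\right\}\right)<\exp\left(-\frac{c}{\mu_{u_i}(\Omega_1)}\delta k\right)\le\exp\left(-\frac{c}{C_a}\delta k\right)=\exp(-\bar c_a\delta k).\]
Taking $i=1$ (with $\langle u_1\rangle=D$) gives the first inequality of the lemma and $i=2$ (with $\langle u_2\rangle=2D$) the second. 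There is no genuine obstacle here; the only point needing a little care is the bookkeeping that the additive constants $\log\lambda_a$ and $2\log\lambda_a$ drop out of both the supremum difference controlling the Riesz mass and the space averages, which is what keeps all of $C_a$, $\bar c_a$, $c_a$ free of $\lambda_a$.
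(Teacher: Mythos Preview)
Your proposal is correct and is exactly the argument the paper has in mind: the paper simply records the Riesz-mass bound $\mu_d(\Omega_1)\le C_a$ and then states the lemma with the phrase ``Correspondingly, the following two strong Birkhoff Ergodic Theorems both hold,'' leaving the direct application of Theorem~\ref{erg} implicit. Your write-up makes this explicit and also supplies the analogous bound $\mu_{u_1}(\Omega_1)\le C_a$ for $u_1=\log|\lambda_a a|$ (which the paper does not spell out), so if anything you are slightly more complete than the paper itself.
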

\begin{remark}\label{ergu}
By (\ref{re1}), Lemma \ref{erga} and Lemma \ref{ldtd}, it implies that if $\beta<\min(c_v\delta,c_a\delta)$ and $\lambda_v>\lambda_0$, then
\[\mes\left (\left \{x\in \mathbb{T}:|\frac{1}{k}\sum_{j=1}^k u^u_n(x+j\omega)-L_n(E,\omega)|>2\delta \right \}\right )<\exp(-\bar c_v\delta k)+\exp(-\bar c_a\delta k).\]Set $C_{v,a}=\max(\log\frac{10\|v\|_{L^{\infty}(\Omega)}}{\epsilon_0},\log\frac{\|a\|_{L^{\infty}(\Omega)}}{\|a\|_{L^{\infty}(\Omega_1)}})$. Then, there exist $c_{v,a}=\frac{1}{2CC(\Omega,\Omega_1)C_{v,a}}$ and $\bar c_{v,a}=\frac{c}{C(\Omega,\Omega_1)C_{v,a}}$ such that for any $\delta>0$, if $\beta<c_{v,a}\delta$ and $\lambda_v>\lambda_0$, then
\begin{equation}\label{ergeu}
  \mes\left (\left \{x\in \mathbb{T}:|\frac{1}{k}\sum_{j=1}^k u^u_n(x+j\omega)-L_n(E,\omega)|>\delta \right \}\right )<\exp(-\bar c_{v,a}\delta k).
\end{equation}
And to reduce the plaguing of too many symbols, we can use $c_{v,a}$ and $\bar c_{v,a}$ instead of $c_v$ and $\bar c_v$ in Lemma \ref{erga}, and of $c_a$ and $\bar c_a$ in Lemma \ref{ldtd}, as $c_{v,a}=\min(c_v,c_a)$ and $\bar c_{v,a}=\min(\bar c_v,\bar c_a)$.
\end{remark}

\section{The Proof of Theorem \ref{ldt}}

Define
$$\mathbb{X}_m=\left\{x\in\mathbb{T}:\left|\frac{1}{m}\sum_{j=0}^{m-1}d(x+j\omega)-2D\right|>\frac{k}{m}D\right\}.$$By Lemma \ref{ldtd}, we have that if $\beta<c_{v,a}\delta$, then  $\mes\left(\mathbb{X}_m\right)=\exp(-\bar c_{v,a}\delta k)$ for any $ 1\le m\le k$. It implies that
\begin{equation}
  \mes\left(\left\{x\in\mathbb{T}:\left|\sum_{j=0}^{k-1}\frac{k-j}{k}d(x+j\omega)-(k+1)D\right|>k\delta\right\}\right)<k\exp(-\bar c_{v,a}\delta k).
\end{equation}
Corollary 2.3 in [T] proved that
\begin{equation}
  -\frac{2Mk}{n}+\sum_{j=0}^{k-1}\frac{k-j}{nk}d(x+j\omega)
\leq u^a_n(x,E,\omega)-\frac{1}{k}\sum_{j=1}^ku^a_n(x+k\omega,E,\omega)\leq \frac{2Mk}{n}-\sum_{j=0}^{k-1}\frac{k-j}{nk}d(x+(n+j-1)\omega).
\end{equation}
So, we define
$$\mathbb{Y}_{-}=\left\{x\in\mathbb{T}: -\frac{2Mk}{n}+\sum_{j=0}^{k-1}\frac{k-j}{nk}d(x+j\omega)<-\delta\right\}$$
and let $k=C_1\delta n$ and $4C_1M\leq 1$. Then
\[\mathbb{Y}_{-}\subset \left\{x\in\mathbb{T}: \sum_{j=0}^{k-1}\frac{k-j}{k}d(x+j\omega)<-\frac{\delta n}{2}=-\frac{k}{2C_1}\right\}.\]
Assume $6C_1|D|\leq 1$ to make $\frac{1}{2C_1}+D=C_2>2|D|\ge 0$ and
\[\frac{k}{2C}+(k+1)D=C_2k+D\ge |D|k.\]
It implies that if $\beta<c_{v,a}|D|$, then
\begin{eqnarray*}
  \mes(\mathbb{Y})_{-}&< &  \mes\left(\left\{x\in\mathbb{T}: \sum_{j=0}^{k-1}\frac{k-j}{k}d(x+j\omega)-(k+1)D  <-k|D| \right\} \right)<k\exp(-\bar c_{v,a}|D| k)\\
  &=&C_1\delta n\exp(-\bar c_{v,a}C_1|D| \delta n).
\end{eqnarray*}Because $y\exp(-\zeta y)\leq \zeta^{-1}$for any $y,\zeta>0$,so
\begin{eqnarray*}
  C_1\delta n\exp(-\bar c_{v,a}C_1|D| \delta n)&=&C_1\delta n\exp(-\frac{\bar c_{v,a}|D|}{2}C_1 \delta n)\exp(-\frac{\bar c_{v,a}|D|}{2}C_1 \delta n)\leq \frac{2}{\bar c_{v,a}|D|}\exp(-\frac{\bar c_{v,a}|D|}{2}C_1 \delta n)\\
  &=&\exp(-\bar c_1\delta n), \forall n\ge  n_1,
\end{eqnarray*}
where $\bar c_1=\bar c_1(v,a,|D|, M_0)=\frac{\bar c_{v,a}|D|C_1}{4}<\bar c_{v,a}$ and $n_1=n_1(a,v,|D|,M_0,\delta)$ satisfying
 \begin{equation}\label{n1}
   n_1=\frac{-4\log\frac{\bar c_{v,a}|D|}{2}}{\bar c_{v,a}|D|C_1\delta}.
 \end{equation}Similar calculations show that
\[\mes\left(\mathbb{Y}_{+}\right)=\mes\left(\left\{x\in\mathbb{T}: \frac{2Mk}{n}-\sum_{j=0}^{k-1}\frac{k-j}{nk}d(x+(n+j-1)\omega)>\delta\right\}\right)<\exp(-\bar c_1\delta n).\]
Therefore, we have the deviation estimation  as follows:
\begin{lemma}\label{deu}For any $\delta>0$, if $\beta<c_{v,a}\min(\delta,|D|)$, then
\[\mes\left(\left\{x\in\mathbb{T}: \left|u^a_n(x,E,\omega)-\frac{1}{k}\sum_{j=1}^ku^a_n(x+k\omega,E,\omega)\right|>\delta\right\}\right)<2\exp(-\bar c_1\delta n),\ \ \forall n\ge n_1.\]
\end{lemma}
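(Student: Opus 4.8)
The plan is to sandwich the difference $u^a_n(x,E,\omega)-\tfrac1k\sum_{j=1}^k u^a_n(x+j\omega,E,\omega)$ between two weighted Birkhoff sums of $d(\cdot,\omega)$ and then feed those into the strong Birkhoff Ergodic Theorem for $d$ (Lemma~\ref{ldtd}). Using the two-sided comparison from Corollary 2.3 in \cite{T} recalled above, for every $x$ this difference lies between $-\tfrac{2M_0k}{n}+\sum_{j=0}^{k-1}\tfrac{k-j}{nk}d(x+j\omega)$ and $\tfrac{2M_0k}{n}-\sum_{j=0}^{k-1}\tfrac{k-j}{nk}d(x+(n+j-1)\omega)$. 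Hence the set $\{x:|u^a_n(x,E,\omega)-\tfrac1k\sum_{j=1}^k u^a_n(x+j\omega,E,\omega)|>\delta\}$ is contained in $\mathbb{Y}_-\cup\mathbb{Y}_+$, where $\mathbb{Y}_{\mp}$ is the set on which the lower (resp. upper) bound leaves $[-\delta,\delta]$, and it suffices to bound $\mes(\mathbb{Y}_-)$ and $\mes(\mathbb{Y}_+)$ each by $\exp(-\bar c_1\delta n)$.

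For $\mathbb{Y}_-$ I would take $k=C_1\delta n$ with $C_1$ a small absolute constant. Imposing $4C_1M_0\le1$ forces the telescoping error $\tfrac{2M_0k}{n}=2M_0C_1\delta$ to be at most $\tfrac\delta2$, so on $\mathbb{Y}_-$ we get $\sum_{j=0}^{k-1}\tfrac{k-j}{k}d(x+j\omega)<-\tfrac{\delta n}{2}=-\tfrac{k}{2C_1}$. The point is now to recognize $\sum_{j=0}^{k-1}\tfrac{k-j}{k}d(x+j\omega)=\tfrac1k\sum_{m=1}^k\sum_{j=0}^{m-1}d(x+j\omega)$ as an average of the ordinary Birkhoff sums handled by Lemma~\ref{ldtd}, with space mean $\tfrac1k\sum_{m=1}^k 2mD=(k+1)D$. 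Imposing also $6C_1|D|\le1$ makes $C_2:=\tfrac1{2C_1}+D>2|D|\ge0$, hence $\tfrac{k}{2C_1}+(k+1)D=C_2k+D\ge|D|k$; so on $\mathbb{Y}_-$ the weighted sum deviates from its mean $(k+1)D$ by at least $|D|k$. Summing the estimate of Lemma~\ref{ldtd} over the windows $1\le m\le k$ with threshold comparable to $|D|$ — this is the step that needs $\beta<c_{v,a}|D|$ — gives $\mes(\mathbb{Y}_-)<k\exp(-\bar c_{v,a}|D|k)=C_1\delta n\exp(-\bar c_{v,a}|D|C_1\delta n)$.

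It remains to swallow the polynomial prefactor. Splitting the exponent in half and using $ye^{-\zeta y}\le\zeta^{-1}$ with $y=C_1\delta n$ and $\zeta=\tfrac12\bar c_{v,a}|D|$ turns the last bound into $\tfrac{2}{\bar c_{v,a}|D|}\exp(-\tfrac12\bar c_{v,a}|D|C_1\delta n)$, which is $\le\exp(-\bar c_1\delta n)$ for all $n\ge n_1$, where $\bar c_1=\tfrac14\bar c_{v,a}|D|C_1$ and $n_1$ is as in (\ref{n1}). The estimate for $\mathbb{Y}_+$ is obtained verbatim after replacing the orbit segment $\{x+j\omega\}_{0\le j<k}$ by $\{x+(n+j-1)\omega\}_{0\le j<k}$ and using translation invariance of Lebesgue measure on $\mathbb{T}$. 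Adding the two bounds yields the $2\exp(-\bar c_1\delta n)$ of the statement.

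I expect the only real subtlety to be the joint calibration of the single free constant $C_1$: it has to be small enough to kill the telescoping error ($4C_1M_0\le1$), small enough that $C_2=\tfrac1{2C_1}+D$ dominates $2|D|$ so that the event defining $\mathbb{Y}_-$ is genuinely a large deviation past the mean $(k+1)D$ rather than merely past $0$ ($6C_1|D|\le1$), and still small enough that $\bar c_1=\tfrac14\bar c_{v,a}|D|C_1$ stays below $\bar c_{v,a}$, leaving slack to absorb the factor $k=C_1\delta n$. One also has to keep in mind that the exponential rate here is governed by the $|D|$-threshold, not the $\delta$-threshold, which is exactly why the hypothesis is $\beta<c_{v,a}\min(\delta,|D|)$ rather than just $\beta<c_{v,a}\delta$; everything else is bookkeeping.
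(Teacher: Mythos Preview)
Your proposal is correct and follows essentially the same route as the paper: the same two-sided bound from Corollary~2.3 of \cite{T}, the same choice $k=C_1\delta n$ with the constraints $4C_1M_0\le1$ and $6C_1|D|\le1$, the same reduction of the weighted sum to a large deviation past $(k+1)D$ handled via Lemma~\ref{ldtd} at threshold $|D|$, and the same $ye^{-\zeta y}\le\zeta^{-1}$ trick to absorb the prefactor. If anything, your explicit rewriting $\sum_{j=0}^{k-1}\tfrac{k-j}{k}d(x+j\omega)=\tfrac1k\sum_{m=1}^k\sum_{j=0}^{m-1}d(x+j\omega)$ makes the passage to Lemma~\ref{ldtd} cleaner than the paper's own presentation.
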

\noindent Combining it with Lemma \ref{erga}, we have the following LDT for $u_n^a(x,E,\omega)$:
\begin{lemma}\label{ldta}
  For any $\delta>0$, if $\beta<c_{v,a}\min(\delta,|D|)$ and $\lambda_v>\lambda_0$, then
\[\mes\left (\left \{x\in \mathbb{T}:| u^a_n(x,E,\omega)-L^a_n(E,\omega)|>\frac{3\delta}{4} \right \}\right )<\exp(-\bar c_{a,v}C_1\left(\frac{\delta}{4}\right)^2 n)+2\exp(-\bar c_1\frac{\delta}{4} n),\ \ \forall n\ge  n_1.\]
\end{lemma}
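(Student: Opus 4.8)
\emph{Plan.} The estimate follows by interpolating $u^a_n(x,E,\omega)$ and $L^a_n(E,\omega)$ through the short orbit average $\frac1k\sum_{j=1}^k u^a_n(x+j\omega,E,\omega)$ and combining the two estimates already in hand: Lemma \ref{deu}, which controls $|u^a_n(x,E,\omega)-\frac1k\sum_{j=1}^k u^a_n(x+j\omega,E,\omega)|$ off an exponentially small set, and Lemma \ref{erga}, the strong Birkhoff ergodic theorem for the subharmonic function $u^a_n(\cdot,E,\omega)$, which controls $|\frac1k\sum_{j=1}^k u^a_n(x+j\omega,E,\omega)-L^a_n(E,\omega)|$ off an exponentially small set. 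I would run both at level $\delta/4$, taking for $k$ the segment length appearing in Lemma \ref{deu} at that level, namely $k=C_1\tfrac\delta4 n$.

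For such $k$ and every $x$, the triangle inequality
\[
\big|u^a_n(x,E,\omega)-L^a_n(E,\omega)\big|\le\big|u^a_n(x,E,\omega)-\tfrac1k\textstyle\sum_{j=1}^ku^a_n(x+j\omega,E,\omega)\big|+\big|\tfrac1k\textstyle\sum_{j=1}^ku^a_n(x+j\omega,E,\omega)-L^a_n(E,\omega)\big|
\]
shows that if both summands are $\le\delta/4$ then the left side is $\le\delta/2<\tfrac34\delta$; hence $\{x\in\mathbb T:|u^a_n(x,E,\omega)-L^a_n(E,\omega)|>\tfrac34\delta\}$ is contained in $\mathcal A\cup\mathcal B$, where $\mathcal A$ and $\mathcal B$ are the exceptional sets of Lemma \ref{deu} and Lemma \ref{erga}, respectively, at level $\delta/4$ with this $n,k$. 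Now $\mes(\mathcal A\cup\mathcal B)\le\mes(\mathcal A)+\mes(\mathcal B)$; the standing hypotheses $\beta<c_{v,a}\min(\delta,|D|)$ and $\lambda_v>\lambda_0$ subsume — up to the choice of the absolute constant — the $\beta$-smallness and $\lambda_v$-largeness needed by both lemmas at level $\delta/4$ (recall $c_{v,a}\le\min(c_v,c_a)$ and $\bar c_{v,a}\le\min(\bar c_v,\bar c_a)$ from Remark \ref{ergu}). Lemma \ref{deu} gives $\mes(\mathcal A)<2\exp(-\bar c_1\tfrac\delta4 n)$ for $n\ge n_1$, and Lemma \ref{erga} with this $k$ gives $\mes(\mathcal B)<\exp(-\bar c_v\cdot\tfrac\delta4\cdot C_1\tfrac\delta4 n)=\exp(-\bar c_vC_1(\tfrac\delta4)^2n)\le\exp(-\bar c_{v,a}C_1(\tfrac\delta4)^2n)$; adding the two yields precisely the asserted bound (with $\bar c_{a,v}$ read as $\bar c_{v,a}$). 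Note that the quadratic factor $(\delta/4)^2$ in the first exponent comes from multiplying the level $\delta/4$ by $k\asymp\delta n$ in Lemma \ref{erga} — which is why it is crucial that that lemma allows an arbitrary segment length $k$ — while the second exponent is linear in $\delta$ because Lemma \ref{deu} supplies the factor $n$ directly.

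There is no genuine obstacle here: the argument is a one-line triangle inequality feeding into two results proved earlier. The only care needed is bookkeeping — the numerical fractions of $\delta$, the choice $k=C_1\tfrac\delta4 n$, and the relations among $C_1,\bar c_1,\bar c_v,\bar c_a,n_1$ — so that the two exponents come out exactly as $\bar c_{v,a}C_1(\delta/4)^2n$ and $\bar c_1(\delta/4)n$ and the single hypothesis $\beta<c_{v,a}\min(\delta,|D|)$ (with $\lambda_v>\lambda_0$) covers both applications.
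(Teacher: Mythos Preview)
Your proposal is correct and follows exactly the approach the paper indicates: the paper's entire proof is the sentence ``Combining it with Lemma \ref{erga}'' immediately preceding the statement, and your triangle-inequality splitting through the short orbit average with $k=C_1\tfrac{\delta}{4}n$, invoking Lemma \ref{deu} and Lemma \ref{erga} each at level $\delta/4$, is precisely what is meant. Your bookkeeping of the exponents is accurate, and your remark that the hypothesis $\beta<c_{v,a}\min(\delta,|D|)$ covers both applications ``up to the choice of the absolute constant'' correctly reflects the paper's practice of absorbing small numerical factors into $c_{v,a}$.
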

\begin{remark}\label{ldts}
  For the Schr\"odinger operators, $d(x)\equiv 1$ and
  \[\left|u_n^s(x,E,\omega)-\frac{1}{k}u_n^s(x+k\omega,E,\omega)\right|\leq \frac{2M_0^sk}{n}.\]
  Then, due to Remark \ref{ergs} and the setting $k=\frac{\delta n}{4M_0^s}$, we have if $\beta<c_v\delta$ and $\lambda_s>\lambda_0^s$, then there exists $\bar c_s=\bar c_s(\lambda ,v):= \frac{1}{8M_0^s}\bar c_v$ such that for any positive  $n$,
  \[\mes\left (\left \{x\in \mathbb{T}:| u^s_n(x,E,\omega)-L^s_n(E,\omega)|>\delta \right \}\right )<\exp\left(-\bar c_v\frac{\delta}{2}\frac{\delta n}{4M_0^s}\right)=\exp(-\bar c_s \delta^2 n).\]
\end{remark}
\begin{proof}[Proof of Theorem \ref{ldt}]
The theorem is obtained directly by the setting of $\bar c_1$, (\ref{de2}), Lemma \ref{ldtd} and Lemma \ref{ldta}.
\end{proof}~\\

With the similar process by changing (\ref{de2}) to (\ref{re1}), we have the following LDT for $u_n^u(x,E,\omega)$, which will be applied
to satisfy the assumption (\ref{eq:detsmall}) in  the Avalanche Principle:
 \begin{lemma}\label{ldtu}
  For any $\delta>0$ and  $E\in\mathscr{E}$, if $\beta<c_{v,a}\min(\delta,|D|)$ and $\lambda_v>\lambda_0$, then
\[\mes\left (\left \{x\in \mathbb{T}:| u^u_n(x,E,\omega)-L_n(E,\omega)|>\delta \right \}\right )<\exp(-\bar c_{a,v}C_1\left(\frac{\delta}{4}\right)^2 n)+3\exp(-\bar c_1\frac{\delta}{4} n), \forall n\ge  n_1.\]
\end{lemma}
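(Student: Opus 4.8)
\textbf{Proof proposal for Lemma~\ref{ldtu}.}
The plan is to copy the proof of Theorem~\ref{ldt} almost verbatim, with the relation (\ref{de2}) (linking $M_n^a$ to $M_n$) replaced by (\ref{re1}) (linking $M_n^u$ to $M_n^a$). Dividing (\ref{re1}) by $n$ gives $u_n^u(x,E,\omega)=u_n^a(x,E,\omega)-\frac{1}{2n}\sum_{j=0}^{n-1}d(x+j\omega,\omega)$, and integrating over $\mathbb{T}$ together with translation invariance of Lebesgue measure and $\int_{\mathbb{T}}d(x,\omega)\,dx=2D$ gives $L_n^u(E,\omega)=L_n^a(E,\omega)-D$. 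Since $L_n^u=L_n$, this yields the pointwise identity
\begin{equation}\label{eq:planuu}
u_n^u(x,E,\omega)-L_n(E,\omega)=\Big(u_n^a(x,E,\omega)-L_n^a(E,\omega)\Big)-\frac12\Big(\frac1n\sum_{j=0}^{n-1}d(x+j\omega,\omega)-2D\Big).
\end{equation}
Thus the bad set for $u_n^u$ splits into the bad set for $u_n^a$, already controlled by Lemma~\ref{ldta} at level $\frac{3\delta}{4}$ under exactly the present hypothesis $\beta<c_{v,a}\min(\delta,|D|)$, plus a set on which a Birkhoff sum of $d$ deviates from $2D$, controlled by Lemma~\ref{ldtd}. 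This is precisely the structure of the proof of Theorem~\ref{ldt}; the only new feature is that (\ref{re1}) contributes one additional additive $d$--term, which is exactly what produces the extra $\exp(-\bar c_1\frac{\delta}{4}n)$ summand, so the coefficient becomes $3$ where Lemma~\ref{ldta} had $2$.

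In detail I would first establish the analogue of Lemma~\ref{deu} for $u_n^u$. Combining (\ref{re1}) with the estimate used there (Corollary~2.3 of \cite{T}) for $u_n^a$, one writes $u_n^u(x)-\frac1k\sum_{l=1}^k u_n^u(x+l\omega)$ as the corresponding quantity for $u_n^a$ plus a correction equal to $\frac{1}{2n}$ times a sum of at most $2k$ translates of $d$; with $k=C_1\delta n$ and the constraints $4C_1M_0\le1$, $6C_1|D|\le1$ the correction is $O(\delta)$ off a set controlled by Lemma~\ref{ldtd} at the $|D|$--deviation level (admissible since the hypothesis gives $\beta<c_{v,a}|D|\le c_a|D|$), and the $ye^{-\zeta y}\le\zeta^{-1}$ trick absorbs the polynomial prefactor for $n\ge n_1$. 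Then I would combine this with the ergodic bound for $\frac1k\sum_{l=1}^k u_n^u(x+l\omega)$ about $L_n$, which follows from Lemma~\ref{erga} and Lemma~\ref{ldtd} through (\ref{re1}), and add the estimates, using $\bar c_1<\bar c_{v,a}\le\bar c_a$ to compare the various exponential rates, to reach the displayed bound for $n\ge n_1$.

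The one place needing genuine care — rather than any new idea — is the bookkeeping of deviation levels against the single smallness hypothesis $\beta<c_{v,a}\min(\delta,|D|)$: the $u_n^a$--part is invoked at the level built into Lemma~\ref{ldta}, while every $d$--sum that appears must be handled at the $|D|$--level and never at the $\delta$--level (this is the whole point of routing through weighted sums over only $O(\delta n)$ translates, as in Lemma~\ref{deu}), and the index shifts $\sum_{j=0}^{n-1}$ versus $\sum_{j=1}^{n}$ must be absorbed by translation invariance of the measure. Once the constants are aligned, matching the output to the form $\exp(-\bar c_{a,v}C_1(\delta/4)^2n)+3\exp(-\bar c_1\frac{\delta}{4}n)$ is immediate from the definitions of $\bar c_1$, $\bar c_{v,a}$ and $n_1$.
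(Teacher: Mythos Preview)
Your proposal is correct and follows the paper's own approach essentially verbatim: the paper's entire proof is the single sentence ``With the similar process by changing (\ref{de2}) to (\ref{re1}),'' and your first paragraph unpacks exactly this---split via the identity $u_n^u-L_n=(u_n^a-L_n^a)-\frac12(\frac1n\sum d-2D)$, invoke Lemma~\ref{ldta} for the first bracket and Lemma~\ref{ldtd} for the second, picking up one extra exponential summand. Your second paragraph (re-deriving the analogue of Lemma~\ref{deu} for $u_n^u$ from scratch) is more work than needed, since Lemma~\ref{ldta} can be used as a black box, but it is not wrong.
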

\begin{remark}
  If $\delta <\delta_0= \frac{8\bar c_1}{\bar c_{av}C_1}$, then
  \begin{equation}\label{ldteu}\mes\left (\left \{x\in \mathbb{T}:| u^u_n(x,E,\omega)-L_n(E,\omega)|>\delta \right \}\right )<\exp(- \bar c_u\delta^2n),\ \ \forall n\ge  n_1 ,\end{equation}
  where $\bar c_u=\frac{\bar c_{a,v}C_1}{20}$.
\end{remark}

\section{Applications of Avalanche Principle and the Positive Lyapunov Exponents on an Interval}
Avalanche Principle is the following:
\begin{prop}[Avalanche Principle]
\label{prop:AP} Let $A_1,\ldots,A_n$ be a sequence of  $2\times
2$--matrices whose determinants satisfy
\begin{equation}
\label{eq:detsmall} \max\limits_{1\le j\le n}|\det A_j|\le 1.
\end{equation}
Suppose that \be
&&\min_{1\le j\le n}\|A_j\|\ge\gamma>n\mbox{\ \ \ and}\label{large}\\
   &&\max_{1\le j<n}[\log\|A_{j+1}\|+\log\|A_j\|-\log\|A_{j+1}A_{j}\|]<\frac12\log\gamma\label{diff}.
\ee Then
\begin{equation}
\Bigl|\log\|A_n\cdot\ldots\cdot A_1\|+\sum_{j=2}^{n-1}
\log\|A_j\|-\sum_{j=1}^{n-1}\log\|A_{j+1}A_{j}\|\Bigr| <
C\frac{n}{\gamma} \label{eq:AP}
\end{equation}
with some absolute constant $C$.
\end{prop}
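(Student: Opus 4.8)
The plan is to run the classical singular-value argument for $2\times2$ matrices and close an induction on $n$. First I would fix notation: for a $2\times2$ matrix $A$ with $\|A\|\ge\gamma$ and $|\det A|\le1$, write $s_1(A)=\|A\|\ge\gamma$ and $s_2(A)=|\det A|/s_1(A)\le\gamma^{-1}$ for its singular values, let $\underline u_A$ be a unit right singular vector ($\|A\underline u_A\|=s_1(A)$) and $\bar u_A=A\underline u_A/s_1(A)$ the matching left singular vector. The governing heuristic is that $A$ differs from the rank-one matrix $s_1(A)\,\bar u_A\langle\,\cdot\,,\underline u_A\rangle$ by an operator of norm $s_2(A)\le\gamma^{-1}$, i.e.\ tiny next to $\|A\|$; every estimate is a perturbation off this picture, and the only role of hypothesis \eqref{diff} is to keep the relevant directions transversal.

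\emph{Two-matrix comparison.} For such $A,B$ I would first establish, by expanding $BA\underline u_A=s_1(A)B\bar u_A$ in the singular basis of $B$ and using $s_2/s_1\le\gamma^{-2}$, that
\[
\bigl|\log\|BA\|-\log s_1(B)-\log s_1(A)-\log|\langle\bar u_A,\underline u_B\rangle|\bigr|\le C\gamma^{-1};
\]
moreover, if $\log\|BA\|>\log\|B\|+\log\|A\|-\tfrac12\log\gamma$ (true for consecutive factors by \eqref{diff}), then $|\langle\bar u_A,\underline u_B\rangle|\ge\tfrac12\gamma^{-1/2}$ and the product's image direction obeys $\angle(\bar u_{BA},\bar u_B)\le C\gamma^{-3/2}$. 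Taking $A=A_j$, $B=A_{j+1}$ gives in particular $|\langle\bar u_{A_j},\underline u_{A_{j+1}}\rangle|\ge\tfrac12\gamma^{-1/2}$ for every $j$.

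\emph{Induction on partial products.} Set $B_k=A_k\cdots A_1$; since $|\det B_k|\le1$ one has $s_2(B_k)\le s_1(B_k)^{-1}$ for free. I would prove simultaneously by induction on $k$ that (i) $s_1(B_k)\ge\gamma$, (ii) $\angle(\bar u_{B_k},\bar u_{A_k})\le C\gamma^{-3/2}$, and (iii) $\bigl|\log\|B_k\|+\sum_{j=2}^{k-1}\log\|A_j\|-\sum_{j=1}^{k-1}\log\|A_{j+1}A_j\|\bigr|\le Ck/\gamma$. For the step $k-1\to k$, apply the two-matrix comparison with $B=A_k$ and $A=B_{k-1}$: by (ii) at level $k-1$, $|\langle\bar u_{B_{k-1}},\underline u_{A_k}\rangle|$ differs from $|\langle\bar u_{A_{k-1}},\underline u_{A_k}\rangle|\ge\tfrac12\gamma^{-1/2}$ by at most $C\gamma^{-3/2}$, which supplies both the transversality needed to apply the comparison to $A_kB_{k-1}$ and the bound $\bigl|\log|\langle\bar u_{B_{k-1}},\underline u_{A_k}\rangle|-\log|\langle\bar u_{A_{k-1}},\underline u_{A_k}\rangle|\bigr|\le C\gamma^{-1}$. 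Substituting the comparison formulas for $\|A_kB_{k-1}\|$ and $\|A_kA_{k-1}\|$ into the difference between (iii) at levels $k$ and $k-1$, all $\log s_1$ terms cancel and only $\log|\langle\bar u_{B_{k-1}},\underline u_{A_k}\rangle|-\log|\langle\bar u_{A_{k-1}},\underline u_{A_k}\rangle|+O(\gamma^{-1})=O(\gamma^{-1})$ survives; summing over $k\le n$ and invoking $n<\gamma$ yields \eqref{eq:AP}. Properties (i) and (ii) at level $k$ fall out of the same expansion: $\|A_kB_{k-1}\|\ge\tfrac12\gamma^{-1/2}s_1(A_k)s_1(B_{k-1})\ge\gamma$, and $B_{k-1}$ sends everything away from its near-kernel line to within $C\gamma^{-3/2}$ of $\bar u_{B_{k-1}}$, while $A_k$ then sends that to within $C\gamma^{-3/2}$ of $\bar u_{A_k}$ (using $|\langle\bar u_{B_{k-1}},\underline u_{A_k}\rangle|\gtrsim\gamma^{-1/2}$ to stay off $A_k$'s near-kernel line). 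The cases $k=1,2$ are immediate, the $k=2$ identity holding as an equality.

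\emph{Main difficulty.} The crux is the accounting in (ii): one must show the dominant direction of the partial product drifts by only $O(\gamma^{-3/2})$ per step — strictly better than the $O(\gamma^{-1/2})$ transversality floor — so that the induced logarithmic errors are $O(\gamma^{-1})$ each and, over fewer than $\gamma$ steps, still sum to $O(n/\gamma)$ without ever breaching the $\tfrac12\log\gamma$ margin that keeps $B_k$ in the rank-one regime. This is where $\gamma>n$ and \eqref{diff} are used in tandem; everything else is routine $2\times2$ linear algebra.
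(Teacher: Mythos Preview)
The paper does not prove this proposition at all: the Avalanche Principle is stated in Section~5 as a known tool, with no proof given, and the paper immediately proceeds to apply it in Lemma~\ref{62001}. The result originates in Goldstein--Schlag~\cite{GS}, and the paper treats it as a black box.

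Your sketch is the standard singular-value argument from~\cite{GS}: approximate each $A_j$ by its rank-one principal part, use hypothesis~\eqref{diff} to guarantee the transversality $|\langle\bar u_{A_j},\underline u_{A_{j+1}}\rangle|\gtrsim\gamma^{-1/2}$, and run an induction showing the dominant direction of $B_k=A_k\cdots A_1$ drifts by only $O(\gamma^{-3/2})$ per step so the logarithmic errors telescope to $O(n/\gamma)$. This is the correct approach and matches the original proof; there is simply nothing in the present paper to compare it against.
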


\begin{lemma}\label{62001}
Assume $L_n(E,\omega)>0$, $\delta=\min\left(1,\delta_0,\frac{1}{15}L_n(E,\omega)\right)$, $L_{2n}(E,\omega)>\frac{9}{10}L_{n}(E,\omega)$, $\beta<c_{v,a}\min\left(\frac{L_n(E,\omega)}{15},|D|\right)$ and $\lambda_v>\lambda_0$.  Let $N=m
n$, $m\in \mathbb{N}$ and
$\exp(\frac{\bar c_u}{3}\delta^2 n) \leq m\leq
\exp(\frac{\bar c_u}{3}\delta^2 n)+1 $. There exists $n_2=n_2(\lambda_a,a,\lambda_v,v,\delta)$ such that for any $n\ge  n_2$,  \be
|L_{N}(E,\omega)+L_{n}(E,\omega)-2L_{2n}(E,\omega)|\leq \exp(-\frac{\bar c_u}{20}\delta^2 n).\ee
\end{lemma}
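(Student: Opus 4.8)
## Proof proposal

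The plan is to apply the Avalanche Principle (Proposition \ref{prop:AP}) to the chain of $m$ matrices $A_j = M^u_n(x + (j-1)n\omega, E, \omega)$ for $j = 1, \dots, m$, whose product is $M^u_N(x,E,\omega)$, and then integrate the resulting identity \eqref{eq:AP} over $x \in \mathbb{T}$. Since each $A_j$ has determinant of modulus $1$, hypothesis \eqref{eq:detsmall} holds automatically, which is precisely why the $u^u$-normalization was introduced. The conclusion \eqref{eq:AP} schematically reads
\[
\Bigl| \log\|M^u_N\| + \sum_{j=2}^{m-1}\log\|A_j\| - \sum_{j=1}^{m-1}\log\|A_{j+1}A_j\| \Bigr| < C\frac{m}{\gamma},
\]
and after dividing by $N = mn$ and integrating, the telescoping structure collapses the middle sums: $\frac{1}{N}\sum_{j=2}^{m-1}\int \log\|A_j\| \approx \frac{m-2}{m} L^u_n = L_n$ and $\frac{1}{N}\sum_{j=1}^{m-1}\int\log\|A_{j+1}A_j\| \approx \frac{m-1}{m}L^u_{2n} = L_{2n}$ (using that $A_{j+1}A_j = M^u_{2n}(x+(j-1)n\omega,E,\omega)$ and the shift-invariance of Lebesgue measure). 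Collecting terms yields $|L_N + L_n - 2L_{2n}| \lesssim \frac{1}{n} + \frac{C}{\gamma} + (\text{error from the exceptional set where AP fails})$, and one checks the first two terms are $o(\exp(-\frac{\bar c_u}{20}\delta^2 n))$ given the choice $\gamma \approx \exp(n L_n/2)$ and $m \approx \exp(\frac{\bar c_u}{3}\delta^2 n)$.

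The key steps, in order: (i) Fix $x$ and verify the AP hypotheses \eqref{large}, \eqref{diff} hold for the chain $A_1, \dots, A_m$ for all $x$ outside an exceptional set $\mathcal{B}_x$ of small measure. For \eqref{large} I need $\|M^u_n(x+jn\omega,E,\omega)\| \geq \gamma > m$; by the LDT \eqref{ldteu} for $u^u_n$, the set where $u^u_n(x,E,\omega) < L_n(E,\omega) - \delta$ has measure $< \exp(-\bar c_u \delta^2 n)$, so with $\delta \leq \frac{1}{15}L_n$ we get $\|M^u_n\| \geq \exp((L_n-\delta)n) \geq \exp(\frac{14}{15}L_n n) =: \gamma$, which exceeds $m \approx \exp(\frac{\bar c_u}{3}\delta^2 n)$ once $n \geq n_2$ since $\delta^2 \ll L_n$. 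For \eqref{diff} I need $\log\|A_{j+1}\| + \log\|A_j\| - \log\|A_{j+1}A_j\| < \frac12\log\gamma$; since $A_{j+1}A_j = M^u_{2n}$, the left side is $n(u^u_n(x+jn\omega) + u^u_n(x+(j-1)n\omega) - 2u^u_{2n}(x+(j-1)n\omega))$, and applying \eqref{ldteu} to $u^u_n$ at the two points and to $u^u_{2n}$, together with $L_{2n} > \frac{9}{10}L_n$, bounds this by roughly $n(2L_n - 2L_{2n} + O(\delta)) < \frac{1}{4} L_n n \leq \frac12 \log\gamma$ off a set of measure $\lesssim m\exp(-\bar c_u\delta^2 n)$. (ii) Estimate the measure of the union $\mathcal{B} = \bigcup_x \{x : \text{AP fails at stage } x\}$: it is at most $m \cdot \exp(-\bar c_u \delta^2 n) \cdot (\text{const})$, which by the choice $m \leq \exp(\frac{\bar c_u}{3}\delta^2 n) + 1$ is $\lesssim \exp(-\frac{2\bar c_u}{3}\delta^2 n)$. (iii) Integrate \eqref{eq:AP} over the good set, handle the bad set by the crude bound $|u^u_n| \leq M_0$ (so its contribution to the integral is $\lesssim M_0 \cdot \text{mes}(\mathcal{B})$, still exponentially small). (iv) Recombine: $|L_N + L_n - 2L_{2n}| \leq \frac{Cm}{N\gamma} + \frac{\text{const}}{n}\cdot(\text{boundary terms } \log\|A_1\|, \log\|A_m\|) + (\text{bad-set error})$, and verify every term is $\leq \exp(-\frac{\bar c_u}{20}\delta^2 n)$ for $n \geq n_2$.

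The main obstacle I anticipate is bookkeeping the several competing exponential rates so that the final bound is genuinely $\exp(-\frac{\bar c_u}{20}\delta^2 n)$ rather than merely $o(1)$: the measure of the exceptional set decays like $\exp(-\bar c_u\delta^2 n)$ but is multiplied by $m \approx \exp(\frac{\bar c_u}{3}\delta^2 n)$, so only a fraction of the exponent survives — this is exactly why $m$ is taken as small as $\exp(\frac{\bar c_u}{3}\delta^2 n)$ and why the final constant is $\frac{\bar c_u}{20}$ rather than $\frac{\bar c_u}{3}$. One must also confirm that $\gamma$ is large enough that $\frac{Cm}{N\gamma} = \frac{C}{n}\exp(\frac{\bar c_u}{3}\delta^2 n - \frac{14}{15}L_n n)$ is negligible, which forces the hypothesis $L_{2n} > \frac{9}{10}L_n$ to be used to keep $\delta^2$ small relative to $L_n$, and the definition of $n_2$ must absorb all these thresholds simultaneously. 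A secondary subtlety is that the telescoping of the $\int\log\|A_j\|$ sums is not exact — the endpoints $j=1$ and $j=m$ are missing from one sum or the other — so one picks up $O(\frac{1}{m})\cdot L_n$ and $O(\frac{M_0}{n})$ correction terms that must be checked to be dominated; these are harmless since $\frac{1}{m}$ is doubly exponentially small.
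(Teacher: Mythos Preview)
Your overall strategy matches the paper's proof exactly: apply the Avalanche Principle to the chain $A_j = M^u_n(x+(j-1)n\omega,E,\omega)$, verify \eqref{large} and \eqref{diff} via the LDT \eqref{ldteu} off a bad set of measure $\lesssim m\exp(-\bar c_u\delta^2 n)$, integrate \eqref{eq:AP}, and collect terms. The choice $\gamma = \exp(\frac{14}{15}nL_n)$, the bad-set measure estimate $\lesssim \exp(-\frac{2\bar c_u}{3}\delta^2 n)$, and the handling of the endpoint terms via $\frac{2}{m}|L_n - L_{2n}|$ are all as in the paper.

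There is one genuine gap. In step~(iii) you write ``handle the bad set by the crude bound $|u^u_n| \leq M_0$''. This is false in the Jacobi setting: the bound $u^a_n \leq M_0$ holds, but by \eqref{re1} one has $u^u_n = u^a_n - \frac{1}{2n}\sum_j d(x+j\omega,\omega)$, and $d(x,\omega) = \log|\lambda_a^2 a(x+\omega)\tilde a(x)|$ tends to $-\infty$ at zeros of $a$. Hence $u^u_n$ is unbounded above and your estimate $\int_{\mathcal{B}} |u^u_n| \lesssim M_0 \cdot \mes(\mathcal{B})$ does not hold. The paper instead uses Cauchy--Schwarz,
\[
\Bigl|\int_{\mathbb{T}\setminus\mathbb{G}} u^u_n(x,E,\omega)\,dx\Bigr| \leq \|u^u_n(\cdot,E,\omega)\|_{L^2(\mathbb{T})} \bigl(\mes(\mathbb{T}\setminus\mathbb{G})\bigr)^{1/2},
\]
and then bounds $\|u^u_n\|_{L^2}$ by $\sup_{E\in\mathscr{E}}\|M^u(\cdot,E,\omega)\|_{L^2(\mathbb{T})}$, which is finite since $\log|a|$ is integrable. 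This is exactly why the paper's $n_2$ in \eqref{n2} is expressed in terms of that $L^2$ norm rather than $M_0$. The same issue recurs in your step~(iv) where you invoke an $O(M_0/n)$ boundary term. Once you replace the $L^\infty$ bound by the $L^2$/Cauchy--Schwarz argument, the rest of your proposal goes through unchanged.
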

\begin{proof}
By (\ref{ldteu}), we have, for $0\leq j\leq m-1$ and $\forall x\in\mathbb{G}$,
\[|u_n^u(x+jn\omega,E,\omega)-L_{n}(E,\omega)|<\frac{L_n(E,\omega)}{15},\]
\[|u_{2n}^u(x+jn\omega,E,\omega)-L_{2n}(E,\omega)|<\frac{L_n(E,\omega)}{15},\]
with
\[\mes(\mathbb{T}\backslash \mathbb{G})\leq 2m \times\exp\left(-\bar c_u\delta^2 n\right)<2\exp(-\frac{2\bar c_u}{3}\delta^2n).\]
Thus, when $x\in\mathbb{G}$,
\[\|M_{n}^u(x+jn\omega,E,\omega)\|>\exp(\frac{14}{15}nL_{n}(E,\omega)),\]
and
\begin{eqnarray}\nonumber
&&\bigg |\log\|M_n^u(x+jn\omega,E,\omega)\|+\log\|M_{n}^u(x+(j+1)n\omega,E,\omega)\|-\log\|M_n^u(x+jn\omega,E,\omega)M_{n}^u(x+(j+1)n\omega,E,\omega)\|\bigg |\\
&\ &\ \ \ \ \ \ \ \ <4n\frac{L_n(E,\omega)}{100}+2n|L_{n}(E,\omega)-L_{2n}(E,\omega)|
<\frac{7}{15}nL_{n}(E,\omega).\nonumber\end{eqnarray}Therefore, Avalanche Principle applies for
$\gamma=\exp(\frac{14}{15}nL_n(E))$. Integrating over $\mathbb{G}$, we
obtain
\begin{eqnarray}\label{60201}
&&\left|\int_{\mathbb{G}}u_{N}^u(x,E,\omega)dx+\frac{1}{m}\int_{\mathbb{G}}\sum_{j=2}^{m-1}u^u_{n}(x+(j-1)n\omega,E,\omega)dx-\frac{2}{m}\int_{\mathbb{G}}\sum_{j=1}^{m-1}
u^u_{2n}(x+(j-1)n\omega,E,\omega)dx\right|\\
&\leq&
C\frac{m}{N}\exp(-\frac{14}{15}nL_n(E,\omega)).\nonumber\end{eqnarray}  We want to replace  integration over $\mathbb{G}$
by integration over $\mathbb{T}$. By the Cauchy-Schwartz inequality, it has for any $E $, $\omega$ and $n$,
\[\left|\int_{\mathbb{T}\backslash\mathbb{G}}u_n^u(x,E,\omega)dx\right|\leq \|u_n^u(\cdot,E,\omega)\|_{L^2(\mathbb{T})}\bigg(\mes\left(\mathbb{T}\backslash\mathbb{G}\right)\bigg)^{\frac{1}{2}}<\|u_n^u(\cdot,E,\omega)\|_{L^2(\mathbb{T})}\exp(-\frac{\bar c_u}{3}\delta^2n).\]
Thus
\begin{eqnarray*}&&\left|\int_{\mathbb{T}\backslash\mathbb{G}}u_N^u(x,E,\omega)dx+\frac{1}{m}\int_{\mathbb{T}\backslash\mathbb{G}}\sum_{j=2}^{m-1}u_n^u(x+(j-1)m\omega,E,\omega)dx-\frac{2}{m}\int_{\mathbb{T}\backslash\mathbb{G}}\sum_{j=1}^{m-1}
u^u_{2n}(x+(j-1)N\omega,E,\omega)dx\right|\\&\leq&
4\|u_n^u(\cdot,E,\omega)\|_{L^2(\mathbb{T})}\exp(-\frac{\bar c_u}{3}\delta^2n).\nonumber
\end{eqnarray*}
Combining it with (\ref{60201}), we have
\begin{equation}
|L_{N}(E,\omega)+\frac{m-2}{m}L_n(E,\omega)-\frac{2(m-1)}{m}L_{2n}(E,\omega)| \leq
4\|u_n^u(\cdot,E,\omega)\|_{L^2(\mathbb{T})}\exp(-\frac{\bar c_u}{3}\delta^2n)+C\frac{m}{N}\exp(-\frac{14}{15}nL_n(E,\omega))\nonumber.\end{equation}
Thus, if $\exp(\frac{\bar c_u}{3}\delta^2 n) \leq m\leq
\exp(\frac{\bar c_u}{3}\delta^2 n)+1 $, then
\begin{eqnarray*}&& |L_{N}(E,\omega)+L_n(E,\omega)-2L_{2n}(E,\omega)|\\&\leq &
4\|u_n^u(\cdot,E,\omega)\|_{L^2(\mathbb{T})}\exp(-\frac{\bar c_u}{3}\delta^2n)+C\frac{m}{N}\exp(-\frac{1}{2}nL_n(E,\omega))+\frac{2}{m}|L_n(E,\omega)-L_{2n}(E,\omega)|\\
&<& 4\sup_{E\mathcal{E}}\|M^u(\cdot,E,\omega)\|_{L^2(\mathbb{T})}\exp(-\frac{\bar c_u}{3}\delta^2n)+C\frac{m}{N}\exp(-\frac{14}{15}nL_n(E,\omega))+\frac{1}{5m}L_n(E,\omega)\nonumber\\
&\leq
&\exp(-\frac{\bar c_u}{4}\delta^2n),\ \ \ \forall n\ge n_2,\nonumber\end{eqnarray*}where
\begin{equation}
  \label{n2}n_2=n_2(\lambda_a,a,\lambda_v,v,\delta)=\frac{12\log\left(5\sup_{E\in\mathcal{E}}\|M^u(\cdot,E,\omega)\|_{L^2(\mathbb{T})}\right)}{\bar c_u\delta^2}.
\end{equation}
\end{proof}

\begin{lemma}\label{62002}
Assume $L_n(E,\omega)>0$, $\delta=\min\left(1,\delta_0,\frac{1}{15}L_n(E,\omega)\right)$, $L_{2n}(E,\omega)>\frac{9}{10}L_{n}(E,\omega)$, $\beta<c_{v,a}\min\left(\frac{L_n(E,\omega)}{15},|D|\right)$ and $\lambda_v>\lambda_0$. Then, for  any $n\ge  n_2$,
 \be |L(E,\omega)+L_{n}(E,\omega)-2L_{2n}(E,\omega)|< \exp\left(-\frac{\bar c_u}{5}\delta^2
 n\right).\ee
\end{lemma}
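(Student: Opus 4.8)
The plan is to deduce Lemma \ref{62002} from Lemma \ref{62001} by letting the scale $m$ (equivalently $N = mn$) go to infinity while keeping $n$ fixed. The key observation is that Lemma \ref{62001} already gives, for every admissible $m$ (i.e. every $m$ with $\exp(\frac{\bar c_u}{3}\delta^2 n)\le m\le \exp(\frac{\bar c_u}{3}\delta^2 n)+1$), the estimate $|L_N(E,\omega)+L_n(E,\omega)-2L_{2n}(E,\omega)|\le \exp(-\frac{\bar c_u}{20}\delta^2 n)$ with $N=mn$. But the hypotheses of Lemma \ref{62001} ($L_n>0$, the gap condition $L_{2n}>\frac 9{10}L_n$, the smallness of $\beta$, and $\lambda_v>\lambda_0$) do not involve $m$ at all, so the same conclusion holds simultaneously for a whole sequence $N_j = m_j n \to\infty$; in fact, by the remark after Lemma \ref{62001} one can iterate the doubling/multiplying scheme, or simply apply Lemma \ref{62001} at scales $n, 2n, 4n, \dots$ to produce arbitrarily large $N$. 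Since $L_N(E,\omega)\to L(E,\omega)$ as $N\to\infty$ by the definition \eqref{leomega} of the Lyapunov exponent (the limit exists by Kingman), passing to the limit along such a sequence $N_j$ in the inequality of Lemma \ref{62001} gives $|L(E,\omega)+L_n(E,\omega)-2L_{2n}(E,\omega)|\le \exp(-\frac{\bar c_u}{20}\delta^2 n) < \exp(-\frac{\bar c_u}{5}\delta^2 n)$ — wait, the inequality goes the wrong way, so in fact one keeps a slightly larger exponent on the right to absorb the limiting loss.

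Concretely I would argue as follows. First, fix $n\ge n_2$ and verify that the hypotheses of Lemma \ref{62001} are met for this $n$; they are exactly the hypotheses assumed in Lemma \ref{62002}. Next, for each integer $k\ge 0$ consider the scale $N_k$ obtained by telescoping Lemma \ref{62001} $k$ times: apply it once with base scale $n$ to compare $L_{m_0 n}$ with $L_n, L_{2n}$, then treat $m_0 n$ as a new base scale and repeat, each time multiplying by a factor $m\sim \exp(\frac{\bar c_u}{3}\delta^2 n)$. This yields a sequence $N_k\to\infty$ together with telescoping control on $|L_{N_k}(E,\omega)+L_n(E,\omega)-2L_{2n}(E,\omega)|$; the accumulated error is a geometric series dominated by the first term, hence bounded by, say, $2\exp(-\frac{\bar c_u}{20}\delta^2 n)<\exp(-\frac{\bar c_u}{5}\delta^2 n)$ for $n\ge n_2$ (enlarging $n_2$ if necessary). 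Alternatively, and more simply, one can observe that a single application of Lemma \ref{62001} already allows $m$ to be any integer in the window $[\exp(\frac{\bar c_u}{3}\delta^2 n),\exp(\frac{\bar c_u}{3}\delta^2 n)+1]$ — but since that window may contain no integer, the clean route is the iteration above, at each stage choosing $m$ to be the integer part, which is legitimate once $\exp(\frac{\bar c_u}{3}\delta^2 n)\ge 1$.

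Finally, I would let $k\to\infty$: since $L_{N_k}(E,\omega)\to L(E,\omega)$ by \eqref{leomega}, and each $|L_{N_k}(E,\omega)+L_n(E,\omega)-2L_{2n}(E,\omega)|$ is bounded by $\exp(-\frac{\bar c_u}{10}\delta^2 n)$ say, the limit inherits the bound $|L(E,\omega)+L_n(E,\omega)-2L_{2n}(E,\omega)|\le \exp(-\frac{\bar c_u}{10}\delta^2 n)<\exp(-\frac{\bar c_u}{5}\delta^2 n)$, which is the claim. The step I expect to be the main (only) obstacle is bookkeeping the iteration cleanly: one must check that after multiplying the scale by $m$ the doubling hypothesis $L_{2N}>\frac{9}{10}L_N$ is \emph{not} needed again because Lemma \ref{62001}'s conclusion at the base scale $n$ already pins down $L_{2n}$, and that the error terms from successive applications form a summable geometric series rather than accumulating linearly in $k$. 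Everything else is a direct invocation of Lemma \ref{62001} and the definition of $L$.
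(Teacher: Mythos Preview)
Your overall strategy---iterate Lemma \ref{62001} to build a sequence $N_k\to\infty$, then pass to the limit using $L_{N_k}\to L$---is indeed what the paper does. But you have a genuine gap precisely at the point you dismiss as ``bookkeeping.''

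You assert that at the new base scale $N_1=m_0 n$ the doubling hypothesis $L_{2N_1}>\tfrac{9}{10}L_{N_1}$ is ``not needed again.'' This is wrong. Lemma \ref{62001} at base scale $N_1$ requires \emph{all} of its hypotheses at that scale: $L_{N_1}>0$, $L_{2N_1}>\tfrac{9}{10}L_{N_1}$, $\delta_1\le \tfrac{1}{15}L_{N_1}$, and $\beta<c_{v,a}\min(\tfrac{L_{N_1}}{15},|D|)$. None of these are automatic, and verifying them is the substance of the proof. The paper does this by applying Lemma \ref{62001} at base scale $n$ \emph{twice}---once with $N=N_1$ and once with $N=2N_1$---to get both $|L_{N_1}+L_n-2L_{2n}|$ and $|L_{2N_1}+L_n-2L_{2n}|$ small, whence $|L_{N_1}-L_{2N_1}|<2\exp(-\tfrac{\bar c_u}{4}\delta^2 n)$ and, separately, $L_{N_1}>\tfrac{4}{5}L_n-\exp(-\tfrac{\bar c_u}{4}\delta^2 n)>79\delta$. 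These two facts together yield $L_{2N_1}>\tfrac{9}{10}L_{N_1}$, and the lower bound on $L_{N_1}$ also ensures the $\beta$ condition persists (with $\delta'=\tfrac12\delta$). The same two-for-one trick is repeated at every scale $N_k$, maintaining $L_{N_k}>\tfrac{4}{5}L_n-\sum_{j\le k}2^{-j}\delta$ uniformly, so the hypotheses never degrade.

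Your alternative suggestion---apply Lemma \ref{62001} at scales $n,2n,4n,\ldots$---has the same defect: at base scale $2n$ you would need $L_{4n}>\tfrac{9}{10}L_{2n}$, which is not among the assumptions and must be earned. In short, the whole content of the lemma is propagating the $\tfrac{9}{10}$-gap and the positivity of $L_{N_k}$ inductively; once you fill that in you will have reproduced the paper's argument.
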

\begin{proof}
By lemma \ref{62001} for $N_0=n$, $N_1=mN_0$ and
$\exp(\frac{\bar c_u}{3}\delta^2 N_0)\leq m<
\exp(\frac{\bar c_u}{3}\delta^2 N_0)+1$, we have
\begin{equation}\label{54001}|L_{N_1}(E,\omega)+L_{N_0}(E,\omega)-2L_{2N_0}(E,\omega)|<
\exp(-\frac{\bar c_u}{4}\delta^2 N_0),\end{equation} and
\[|L_{2N_1}(E,\omega)+L_{N_0}(E,\omega)-2L_{2N_0}(E,\omega)|<
\exp(-\frac{\bar c_u}{4}\delta^2 N_0).\]In particular
\[|L_{N_1}(E,\omega)-L_{2N_1}(E,\omega)|< 2\exp(-\frac{\bar c_u}{4}\delta^2 N_0).\]
Since $0\leq L_{N_0}(E,\omega)-L_{2N_0}(E,\omega)<\frac{1}{10}L_{N_0}(E,\omega)$ and (\ref{54001}), we
obtain that
\[L_{N_1}(E,\omega)>L_{N_0}(E,\omega)-2\big(L_{N_0}(E,\omega)-L_{2N_0}(E,\omega)\big)-\exp(-\frac{\bar c_u}{4}\delta^2 N_0)>\frac{4}{5}L_{N_0}(E,\omega)-\exp(-\frac{\bar c_u}{4}\delta^2 N_0)>79\delta,\]
and
\[|L_{N_1}(E,\omega)-L_{2N_1}(E,\omega)|\leq 2\exp(-\frac{\bar c_u}{20}\delta^2 N_0)<2\delta<\frac{2}{79}L_{N_1}(E,\omega)<\frac{1}{10}L_{N_1}(E,\omega).\]
Set $\delta'=\frac{1}{2}\delta$. Then $L_{N_1}(E,\omega)>15 \delta'$, and
Lemma \ref{62001} applies for $N_2=m_1N_1$ and
$\exp(\frac{\bar c_u}{3}\delta'^2 N_1)\leq m_1<
\exp(\frac{\bar c_u}{3}\delta'^2 N_1)+1$. Therefore,
\[|L_{N_2}(E,\omega)+L_{N_1}(E,\omega)-2L_{2N_1}(E,\omega)|\leq
\exp(-\frac{\bar c_u}{4}\delta'^2 N_1),\]
\[L_{N_2}(E,\omega)>L_{N_1}(E,\omega)-2|L_{N_1}(E,\omega)-L_{2N_1}(E,\omega)|-\exp(-\frac{\bar c_u}{4}\delta'^2 N_1)>\frac{4}{5}L_{N_0}(E,\omega)-6\exp(-\frac{\bar c_u}{4}\delta^2
N_0)>79\delta>100\delta',\]
\[|L_{2N_2}(E,\omega)+L_{N_1}(E,\omega)-2L_{2N_1}(E,\omega)|\leq
\exp(-\frac{\bar c_u}{4}\delta'^2 N_1),\]and
\[|L_{N_2}(E,\omega)-L_{2N_2}(E,\omega)|<2\exp(-\frac{\bar c_u}{4}\delta'^2 N_1).\]
Since $N_1>8N_0$, we have
\[\exp(-\frac{\bar c_u}{4}\delta'^2N_1)=\exp(-\frac{\bar c_u}{4}\frac{\delta^2}{4}N_1)<(\exp(-\frac{\bar c_u}{4}\delta^2
N_0))^2<(\frac{\delta}{12})^2.\] This implies in particular that
\[|L_{N_2}(E,\omega)-L_{2N_2}(E,\omega)|<2\exp(-\frac{\bar c_u}{4}\delta'^2
N_1)<2\delta<\frac{1}{10}L_{N_2}(E,\omega).\] Then Lemma \ref{62001}
applies for $N_3=m_2N_2$ and $\exp(\frac{\bar c_u}{3}\delta'^2
N_2)\leq m_2<\exp(\frac{\bar c_u}{3}\delta'^2 N_2)+1$. E.T.C.. We
obtain $N_{i+1}=m_iN_i$ and $\exp(\frac{\bar c_u}{3}\delta'^2
N_i)\leq m_i<\exp(\frac{\bar c_u}{3}\delta'^2 N_i)+1$. Then
\[|L_{N_{i+1}}(E,\omega)+L_{N_i}(E,\omega)-2L_{2N_i}(E,\omega)|\leq
\exp(-\frac{\bar c_u}{4}\delta'^2 N_i),\]
\[L_{N_{i+1}}(E,\omega)>L_{N_i}(E,\omega)-2|L_{N_i}(E,\omega)-L_{2N_i}(E,\omega)|-\exp(-\frac{\bar c_u}{4}\delta'^2 N_i)>\frac{4}{5}L_{N_0}(E,\omega)-\sum_{j=1}^{i}(\frac{1}{2})^j\delta\geq 79\delta>50\delta=100\delta',\]
\[|L_{2N_{i+1}}(E,\omega)+L_{N_i}(E,\omega)-2L_{2N_i}(E,\omega)|\leq
\exp(-\frac{\bar c_u}{4}\delta'^2 N_i),\]
\[|L_{N_{i+1}}(E,\omega)-L_{2N_{i+1}}(E,\omega)|<2\exp(-\frac{\bar c_u}{4}\delta'^2 N_i),\]
\[4\exp(-\frac{\bar c_u}{4}\delta'^2 N_i)<(\frac{1}{2})^{i+1}\delta,\]and
\[|L_{N_{i+1}}(E,\omega)-L_{2N_{i+1}}(E,\omega)|<2\delta<\frac{1}{10}L_{N_{i+1}}(E,\omega).\]
Moreover,
\begin{eqnarray}\nonumber
|L_{N_{i+1}}(E,\omega)-L_{N_{i}}(E,\omega)|&\leq&
|L_{N_{i+1}}(E,\omega)+L_{N_i}(E,\omega)-2L_{2N_i}(E,\omega)|+2|L_{N_i}(E,\omega)-L_{2N_i}(E,\omega)|\\
&<&\exp(-\frac{\bar c_u}{4}\delta'^2 N_i)+4\exp(-\frac{\bar c_u}{4}\delta'^2
N_{i-1})<5\exp(-\frac{\bar c_u}{4}\delta'^2 N_{i-1}),\ \ i\geq 2\nonumber,
\end{eqnarray}and
\[|L_{N_2}(E,\omega)-L_{N_1}(E,\omega)|<5\exp(-\frac{\bar c_u}{4}\delta^2 N_0).\]
Since
$L_{N_i}\to L(E,\omega)$ with $i\to \infty$, we have
\begin{eqnarray}\nonumber
&&|L(E,\omega)+L_{N_0}(E,\omega)-2L_{2N_0}(E,\omega)|\\
&=&\big|\sum_{i\geq
1}\big(L_{N_{i+1}}(E,\omega)-L_{N_{i}}(E,\omega)\big)+L_{N_1}(E,\omega)+L_{N_0}(E,\omega)-2L_{2N_0}(E,\omega)\big|\nonumber\\
&\leq &\sum_{s\geq 1}|L_{N_{s+1}}(E,\omega)-L_{N_{s}}(E,\omega)|+|L_{N_1}(E,\omega)+L_{N_0}(E,\omega)-2L_{2N_0}(E,\omega)|\nonumber\\
&=&\sum_{s\geq
2}|L_{N_{s+1}}(E,\omega)-L_{N_{s}}(E,\omega)|+|L_{N_2}(E,\omega)-L_{N_1}(E,\omega)|+|L_{N_1}(E,\omega)+L_{N_0}(E,\omega)-2L_{2N_0}(E,\omega)|\nonumber\\
&<&\sum_{s\geq 2}5\exp(-\frac{\bar c_u}{4}\delta'^2
N_{i-1})+5\exp(-\frac{\bar c_u}{4}\delta^2
N_0)+\exp(-\frac{\bar c_u}{4}\delta^2 N_0)\nonumber\\
&<&\exp(-\frac{\bar c_u}{5}\delta^2 N_0).\nonumber\end{eqnarray}
\end{proof}

\begin{lemma}\label{62003}Assume $L(E_0,\omega_0)>0$. There exists
$n_3=n_3(\lambda_aa,\lambda_vv,L(E_0,\omega_0))$ such that for any $n\ge n_3$, if $|E-E_0|<r_E(n)=\frac{L(E_0,\omega_0)}{200n}\exp\left(-(n-1)M_0-2|D|n\right)$, then
\begin{equation}\label{disE}|L_n(E_0,\omega_0)-L_n(E,\omega_0)|\leq\frac{L(E_0,\omega_0)}{100}.\end{equation}
\end{lemma}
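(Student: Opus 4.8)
The plan is to prove a quantitative modulus of continuity for $L_n(\cdot,\omega_0)$ at a fixed scale $n$, exploiting that the transfer cocycle depends affinely on $E$. Since $L_n(E,\omega)=L_n^a(E,\omega)-D$ and $D$ does not depend on $E$, it suffices to bound $|L_n^a(E_0,\omega_0)-L_n^a(E,\omega_0)|=\frac1n\bigl|\int_{\mathbb T}\bigl(\log\|M_n^a(x,E_0,\omega_0)\|-\log\|M_n^a(x,E,\omega_0)\|\bigr)\,dx\bigr|$. Because the generator of the $M^a$-cocycle is affine in $E$ with $\partial_E$-part a fixed rank-one matrix of norm $\le 1$, and the intermediate partial products are again matrices $M^a_m$ at shifted base points, which on $\mathbb T$ are bounded by $e^{mM_0}$ (using $\sup u^a_m\le M_0$), a telescoping identity yields the pointwise bound $\|M_n^a(x,E,\omega_0)-M_n^a(x,E_0,\omega_0)\|\le n|E-E_0|e^{(n-1)M_0}$. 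Combined with the elementary inequality $|\log\|A\|-\log\|B\||\le\|A-B\|/\min(\|A\|,\|B\|)$, this reduces everything to controlling $\int_{\mathbb T}dx/\min\bigl(\|M_n^a(x,E,\omega_0)\|,\|M_n^a(x,E_0,\omega_0)\|\bigr)$, up to the prefactor $n|E-E_0|e^{(n-1)M_0}$, which is precisely what the exponential weight in $r_E(n)$ is built to cancel.

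I would control that integral by splitting $\mathbb T$ at the level $e^{-2|D|n}$. On the set where $\min\bigl(\|M_n^a(x,E,\omega_0)\|,\|M_n^a(x,E_0,\omega_0)\|\bigr)\ge e^{-2|D|n}$, the bound above together with $|E-E_0|<r_E(n)$ already forces the integrand to be $\le L(E_0,\omega_0)/200$, a negligible contribution after dividing by $n$. On the complementary "bad" set I use $\min(\cdots)\ge|\det M_n^a(x)|^{1/2}=\exp\bigl(\tfrac12\sum_j d(x+j\omega_0,\omega_0)\bigr)$ (relation (\ref{re1}); the determinant does not depend on $E$, and the right-hand Birkhoff sum has $x$-mean $nD\ge -n|D|$), so that the bad set is contained in the large-deviation event $\{\sum_j d(x+j\omega_0,\omega_0)-2nD<-2n|D|\}$, whose Lebesgue measure is exponentially small in $n$ by the subharmonic deviation estimates of Section 2 applied to the subharmonic function $d(\cdot,\omega_0)$. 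Its contribution is then bounded by Cauchy–Schwarz against the $L^2$-norm of $\log\|M_n^a(\cdot,E,\omega_0)\|$, which is $O(n)$ times a constant depending only on $(\lambda_a,a,\lambda_v,v)$; taking $n\ge n_3$ large enough to absorb those fixed constants yields $|L_n(E_0,\omega_0)-L_n(E,\omega_0)|\le L(E_0,\omega_0)/100$. The Schrödinger analogue is much easier: for $a\equiv 1$ one has $D=0$ and $\|M_n^s\|\ge 1$ pointwise, so no splitting is needed and $|L_n^s(E_0,\omega_0)-L_n^s(E,\omega_0)|\le n|E-E_0|e^{(n-1)M_0^s}$ comes out directly, which is why $r_E^s(n)$ carries no $e^{-2|D|n}$ factor and no largeness of $n$ is required.

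The delicate step — and the reason that both the weight $e^{-2|D|n}$ in $r_E(n)$ and the hypothesis $n\ge n_3$ appear — is precisely the measure estimate for the bad set: because the analytic function $a$ has zeros, $|\det M_n^a(x)|^{1/2}$ can be exponentially small along the $\omega_0$-orbit of $x$, and one must invoke relation (\ref{re1}) together with the deviation estimates for the Birkhoff sums of $d(\cdot,\omega_0)$ from Section 2 to see that this occurs only on a set of exponentially small measure, the margin $2|D|n$ being exactly what is needed to dominate the possibly-negative mean $nD$ of those sums. Everything else — the telescoping, the elementary log-ratio inequality, and the $L^2$-control of $\log\|M_n^a\|$ — is routine.
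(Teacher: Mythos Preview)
Your approach is correct and is essentially the same as the paper's. The only cosmetic difference is that the paper passes to the unimodular cocycle $M_n^u$ and uses $\|M_n^u\|\ge 1$ in the denominator of the log-ratio bound, whereas you stay with $M_n^a$ and use the equivalent inequality $\|M_n^a\|\ge|\det M_n^a|^{1/2}$; since $M_n^u=M_n^a/|\det M_n^a|^{1/2}$ and $\det M_n^a$ does not depend on $E$, the two computations coincide, and both then invoke the deviation estimate for the Birkhoff sums of $d(\cdot,\omega_0)$ (Lemma~\ref{ldtd}, which is where the implicit hypothesis $\beta(\omega_0)\le c_{v,a}|D|$ enters) together with Cauchy--Schwarz on the bad set.
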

\begin{proof}
Note that
\begin{eqnarray}\nonumber
&&\bigg |\|M_n^a(x,E_0,\omega)\|-\|M_n^a(x,E,\omega)\|\bigg |
\leq \|M_n^a(x,E_0,\omega)-M_n^a(x,E,\omega)\|\\
&\leq&\sum_{j=0}^{n-1}\bigg(\|M^a(x+(n-1)\omega,E_0,\omega)\times\cdots\times
M^a(x+(j+1)\omega,E_0,\omega)\|\times \nonumber\\&\ &\ \ \ \
\|M^a(x+j\omega,E_0,\omega)-
M^a(x+j\omega,E,\omega)\|\times\|M^a(x+(j-1)\omega,E,\omega)\times\cdots\times
M^a(x,E,\omega)\|\bigg)
\nonumber\\
&\leq&ne^{(n-1)M_0}|E_0-E|\nonumber.
\end{eqnarray}
By (\ref{de2}), we have
\begin{eqnarray}\nonumber
\bigg|\|M_n^u(x,E_0,\omega)\|-\|M_n^u(x,E,\omega)\|\bigg
|&=&\exp\left(-\frac{1}{2}\sum_{j=0}^{n-1}d(x+j\omega,\omega)\right)\bigg
|\|M_n^a(x,E_0,\omega)\|-\|M_n^a(x,E,\omega)\|\bigg |\\
&\leq
&ne^{(n-1)M_0}\exp\left(-\frac{1}{2}\sum_{j=0}^{n-1}d(x+j\omega,\omega)\right)|E_0-E|.\nonumber\end{eqnarray}
Assume, for instance, that $\|M_n^u(x,E_0,\omega)\|\geq
\|M_n^u(x,E,\omega)\|$. Then
\begin{eqnarray}\label{60002}\left |\log\|M_n^u(x,E_0,\omega)\|-\log \|M_n^u(x,E,\omega)\| \right |&=&
\log
(1+\frac{\|M_n^u(x,E_0,\omega)\|-
\|M_n^u(x,E,\omega)\|}{\|M_n^u(x,E,\omega)\|})\\
&\leq &\left|\frac{\|M_n^u(x,E_0,\omega)\|-
\|M_n^u(x,E,\omega)\|}{\|M_n^u(x,E,\omega)\|}\right|\nonumber\\
&\leq&\left|\|M_n^u(x,E_0,\omega)\|-
\|M_n^u(x,E,\omega)\|\right|\nonumber\\
&\le &ne^{(n-1)M_0}\exp\left(-\frac{1}{2}\sum_{j=0}^{n-1}d(x+j\omega,\omega)\right)|E_0-E|.\nonumber\end{eqnarray}
Thus, due to Lemma \ref{ldtd}, we have that if $\beta\leq c_{v,a}|D|$, then there exists $\mathbb{B}_{D}$ satisfying $\mes(\mathbb{B}_{D})\leq \exp(-\bar c_{v,a}|D|n)$ such that when $x\not\in \mathbb{B}_{D}$,
\[\sum_{j=0}^{n-1}\frac{1}{2}
d(x+j\omega) > nD-n|D|\ge -2n|D|.\]
The same estimate holds if
$\|M_n^u(x,E_0,\omega)\|\leq \|M_n^u(x,E,\omega)\|$. So
\begin{equation}\left |\log\|M_n^u(x,E_0,\omega)\|-\log \|M_n^u(x,E,\omega)\| \right |\leq
n\exp\left((n-1)M_0\right)\exp(2|D|n)|E_0-E|,\end{equation}when
$x\not\in\mathbb{B}_D$.
Set $r_E(n)=\frac{L(E_0,\omega_0)}{200}\left[n\exp\left((n-1)M_0\right)\exp(2|D|n)\right]^{-1}$. Therefore, if
$|E-E_0|\leq r_E(n)$, then
 \be\nonumber\left |\int_{\mathbb{T}\backslash
\mathbb{B}_D}u_n^u(x,E_0,\omega_0)-\int_{\mathbb{T}\backslash
\mathbb{B}_D}u_n^u(x,E,\omega_0) \right
|<\frac{L(E_0,\omega_0)}{200}.\ee  By the Cauchy-Schwartz inequality, it has for any $E $, $\omega$ and $n$,
\[\left|\int_{\mathbb{B}_D}u_n^u(x,E,\omega)dx\right|\leq \|u_n^u(\cdot,E,\omega)\|_{L^2(\mathbb{T})}\left(\mes\left(\mathbb{B}_D\right)\right)^{\frac{1}{2}}<\sup_{E\in\mathcal{E}}\|M^u(\cdot,E,\omega)\|_{L^2(\mathbb{T})}\exp(-\frac{\bar c_{v,a}}{2}|D|n).\]
Thus, there exists $ n_3= n_3(\lambda_a a,\lambda_v v, L(E_0,\omega_0))$ satisfying
\begin{equation}\label{n3}
 n_3= \frac{2}{\bar c_{v,a}|D|}\log\left[\frac{L(E_0,\omega_0)}{200}\sup_{E\in\mathcal{E}}\|M^u(\cdot,E,\omega_0)\|_{L^2(\mathbb{T})}^{-1}\right]
\end{equation} such that for any $n\ge  n_3$, if $|E-E_0|<r_E(n)$, then
\[|L_n(E_0,\omega_0)-L_n(E,\omega_0)|\leq\frac{L(E_0,\omega_0)}{100}.\]
\end{proof}

Now, we can get an interval centered at $E_0$, where the Lyapunov exponent is always positive.
\begin{lemma}\label{62006}Assume $L(E_0,\omega_0)>0$, $\beta(\omega_0)<c_{v,a}\min\left(\frac{L(E_0,\omega_0)}{15},|D|\right)$ and $\lambda_v>\lambda_0$. There exists $r_E>0$ such
that for  any $|E-E_0|<r_E$,
 \[
\frac{6}{5}L(E_0,\omega_0)>L(E,\omega_0)>\frac{4}{5}L(E_0,\omega_0).\]
\end{lemma}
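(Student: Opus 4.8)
The plan is to fix one large scale $n$ and to compare $L(E,\omega_0)$ with $L(E_0,\omega_0)$ through the intermediate finite-scale quantities $L_n$ and $L_{2n}$: Lemma \ref{62002} passes from the finite scales to the limit, while Lemma \ref{62003} transports everything from $E_0$ to a nearby $E$. Concretely, since $L_m(E_0,\omega_0)\searrow L(E_0,\omega_0)>0$ as $m\to\infty$, I first fix $n$ so large that $L_n(E_0,\omega_0)$ and $L_{2n}(E_0,\omega_0)$ both lie in $\bigl(\tfrac{199}{200},\tfrac{201}{200}\bigr)L(E_0,\omega_0)$; then $L_{2n}(E_0,\omega_0)>\tfrac9{10}L_n(E_0,\omega_0)$ and $\delta:=\min\bigl(1,\delta_0,\tfrac1{15}L_n(E_0,\omega_0)\bigr)$ exceeds a fixed positive constant. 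Enlarging $n$ further if necessary, I also require $n\ge n_2$, $n\ge n_3$, and that the error term $\exp\bigl(-\tfrac{\bar c_u}{5}\delta^2 n\bigr)$ from Lemma \ref{62002} (uniformly over the relevant range of $\delta$) be $<\tfrac1{100}L(E_0,\omega_0)$; all of these thresholds are finite because $\delta$, $|D|$ and the relevant $L^2$ norms are bounded below/above once $L(E_0,\omega_0)>0$ is fixed. Finally I set $\check n:=2n$ and $r_E:=\frac{L(E_0,\omega_0)}{200\check n}\exp\bigl((1-\check n)M_0-2\check n|D|\bigr)>0$, noting that $r_E(m):=\frac{L(E_0,\omega_0)}{200m}\exp\bigl((1-m)M_0-2m|D|\bigr)$ is decreasing in $m$, so $r_E=r_E(\check n)\le r_E(n)$.

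Next, for $|E-E_0|<r_E$, I apply Lemma \ref{62003} at the two scales $n$ and $2n$ to get
\[|L_n(E_0,\omega_0)-L_n(E,\omega_0)|\le\tfrac1{100}L(E_0,\omega_0),\qquad |L_{2n}(E_0,\omega_0)-L_{2n}(E,\omega_0)|\le\tfrac1{100}L(E_0,\omega_0).\]
Combined with the first step this places $L_n(E,\omega_0)$ and $L_{2n}(E,\omega_0)$ in $\bigl(\tfrac{197}{200},\tfrac{203}{200}\bigr)L(E_0,\omega_0)$, so in particular $L_n(E,\omega_0)>0$ and $L_{2n}(E,\omega_0)>\tfrac9{10}L_n(E,\omega_0)$; moreover, exploiting the slack in the strict hypothesis $\beta(\omega_0)<c_{v,a}\min\bigl(\tfrac1{15}L(E_0,\omega_0),|D|\bigr)$ (and shrinking $r_E$ and enlarging $n$ a bit more if needed so that the $\tfrac1{100}L(E_0,\omega_0)$ loss is absorbed), one obtains $\beta(\omega_0)<c_{v,a}\min\bigl(\tfrac1{15}L_n(E,\omega_0),|D|\bigr)$. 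Thus all hypotheses of Lemma \ref{62002} hold at $(E,\omega_0)$ for this $n$.

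Lemma \ref{62002} then yields
\[|L(E,\omega_0)+L_n(E,\omega_0)-2L_{2n}(E,\omega_0)|<\exp\bigl(-\tfrac{\bar c_u}{5}\delta'^2 n\bigr)<\tfrac1{100}L(E_0,\omega_0),\]
where $\delta'=\min\bigl(1,\delta_0,\tfrac1{15}L_n(E,\omega_0)\bigr)$. Hence $L(E,\omega_0)=2L_{2n}(E,\omega_0)-L_n(E,\omega_0)+\theta$ with $|\theta|<\tfrac1{100}L(E_0,\omega_0)$, and since $L_n(E,\omega_0)$ and $L_{2n}(E,\omega_0)$ each differ from $L(E_0,\omega_0)$ by at most $\tfrac3{200}L(E_0,\omega_0)$, the right-hand side lies within $\tfrac15 L(E_0,\omega_0)$ of $L(E_0,\omega_0)$, which gives $\tfrac45 L(E_0,\omega_0)<L(E,\omega_0)<\tfrac65 L(E_0,\omega_0)$, as desired (with room to spare for the stated constants $\tfrac45,\tfrac65$).

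The computations with the numerical fractions are routine; the only genuinely delicate point is that Lemma \ref{62002} is invoked at the moving base point $(E,\omega_0)$, so both its internal parameter $\delta'$ and its threshold $n_2$ depend on $L_n(E,\omega_0)$ rather than on something fixed in advance. This is why one must fix $n$ \emph{first}, using the uniform lower bound on $\delta'$ that $L(E_0,\omega_0)>0$ guarantees, and only afterwards define $r_E=r_E(\check n)$, so that there is no circular dependence between $n$ and $r_E$; the strict Diophantine hypothesis on $\omega_0$ is precisely what allows the perturbed value $L_n(E,\omega_0)$ to still clear the inequality needed by Lemma \ref{62002}. I expect this bookkeeping to be the main (if mild) obstacle, everything else following by feeding Lemma \ref{62003} into Lemma \ref{62002}.
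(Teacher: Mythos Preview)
Your proposal is correct and follows essentially the same approach as the paper: fix a large scale $n$ (the paper calls the threshold $n_4$) so that $L_n(E_0,\omega_0)$ and $L_{2n}(E_0,\omega_0)$ are both within $\tfrac{1}{100}L(E_0,\omega_0)$ of $L(E_0,\omega_0)$, transfer these bounds to $L_n(E,\omega_0)$ and $L_{2n}(E,\omega_0)$ via Lemma~\ref{62003}, check the hypotheses of Lemma~\ref{62002} at the perturbed energy, and read off the conclusion; the paper sets $r_E=r_E\bigl(\max(n_2,n_3,2n_4)\bigr)$, which matches your $r_E(\check n)$. Your discussion of the bookkeeping (that $n_2$ and the internal $\delta'$ in Lemma~\ref{62002} depend on $L_n(E,\omega_0)$, so one must exploit the uniform lower bound coming from $L(E_0,\omega_0)>0$ and the strict inequality on $\beta(\omega_0)$) is in fact more explicit than the paper's, which simply asserts ``Lemma~\ref{62002} applies for $L_n(E,\omega_0)$'' without further comment.
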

\begin{proof}
By the subadditive property, there exists
$n_4=n_4(\lambda_aa,\lambda_vv,\omega_0,E_0)$ such that for any $n\ge n_4$,
$$L_n(E_0,\omega_0)-L(E_0,\omega_0)<\frac{L(E_0,\omega_0)}{100}\ \mbox{and\ } L_{n}(E_0,\omega_0)-L_{2n}(E_0,\omega_0)<\frac{L(E_0,\omega_0)}{100}.$$
Thus, for any $|E-E_0|\le r_E(2n_4)$, it has
\begin{eqnarray}\nonumber L_{n}(E,\omega_0)&\geq& L(E_0,\omega_0)-
|L_{n}(E,\omega_0)-L_{n}(E_0,\omega_0)|-|L_{n}(E_0,\omega_0)-L(E_0,\omega_0)|\\&>&
L(E_0,\omega_0)-\frac{L(E_0,\omega_0)}{100}-\frac{L(E_0,\omega_0)}{100}=\frac{49}{50}L(E_0,\omega_0)>0,\nonumber
\end{eqnarray}
and
\begin{eqnarray*}|L_{n}(E,\omega_0)-L_{2n}(E,\omega_0)|&\leq&
|L_{n}(E,\omega_0)-L_{n}(E_0,\omega_0)|+|L_{n}(E_0,\omega_0)-L_{2n}(E_0,\omega_0)|+|L_{2n}(E_0,\omega_0)-L_{2n}(E,\omega_0)|\\
&<&\frac{L(E_0,\omega_0)}{100}+\frac{L(E_0,\omega_0)}{100}+\frac{L(E_0,\omega_0)}{100}=\frac{3}{100}L(E_0,\omega_0)<\frac{1}{10}L_{n}(E,\omega_0).
\nonumber\end{eqnarray*}
Thus, Lemma \ref{62002}
applies for $L_{n}(E,\omega_0)$. It implies that if $n>n_2$, then
\[L(E,\omega_0)>L_n(E,\omega_0)-|L_n(E,\omega_0)-L_{2n}(E,\omega_0)|-\exp(-\bar c_u\delta^2 n)>\frac{441}{500}L(E_0)-\exp(-\bar c_u\delta^2n)>\frac{4}{5}L(E_0,\omega_0).\]
Let $r_E=r_E(\max(n_2,n_3,2n_4))$ and $L(E,\omega_0)<\frac{6}{5}L(E_0,\omega_0)$ by similar computations.
\end{proof}
\begin{remark}\label{t3}
  Due to the compactness in $E$ and the joint continuity of $L(E,\omega)$, there exists $r_{\omega}$ such that for any $|\omega-\omega_0|\le r_{\omega}$ and $|E-E_0|\le r_E$,
  \[
\frac{5}{4}L(E_0,\omega_0)>L(E,\omega)>\frac{3}{4}L(E_0,\omega_0).\]
\end{remark}

When we consider the Schr\"ondinger operators, we can calculate the expression of $r_{\omega}$:
\begin{lemma}\label{disomega}
  Assume $L^s(E_0,\omega_0)>0$, $\beta(\omega_0)<\frac{c_s}{100}L^s(E_0,\omega_0)$ and $\lambda_s>\lambda_0^s$. There exist $r^s_E=r^s_E(\lambda_s,v,E_0,\omega_0)$ and $r^s_{\omega}=r^s_{\omega}(\lambda_s,v,E_0,\omega_0)$ such that for any $|\omega-\omega_0|<r^s_{\omega}$, $|E-E_0|<r^s_E$ and $\beta(\omega)<\frac{c_s}{100}L^s(E_0,\omega_0)$,
  \[
\frac{6}{5}L^s(E_0,\omega_0)>L^s(E,\omega)>\frac{4}{5}L^s(E_0,\omega_0).\]
\end{lemma}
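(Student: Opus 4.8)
The plan is to reproduce, in the Schr\"odinger setting, the chain of arguments behind Lemmas \ref{62003} and \ref{62006}, but to keep the dependence on the shift \emph{quantitative} so that $r^s_\omega$ can be written down. The only genuinely new ingredient is a joint $(E,\omega)$-continuity estimate for the finite-scale exponents $L^s_n$; everything else transcribes Section 5 verbatim, and is in fact cleaner here because $M^s_n$ is unimodular (so $u^s_n=\frac1n\log\|M^s_n\|$, $\|M^s_n\|\ge1$ pointwise, and there is no $d(\cdot,\omega)$, no $|D|$ and no bad set $\mathbb{B}_D$), because $D=0$, and because $M^s(x,E,\omega)$ does not depend on $\omega$: all of the $\omega$-dependence of $M^s_n(x,E,\omega)=\prod_{k=n}^1 M^s(x+k\omega,E)$ sits in the sampling points $x+k\omega$.

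\emph{Step 1 (a deterministic perturbation bound).} Fix $E,E'\in\mathcal{E}_s$ and $|\omega-\omega'|<\tfrac1{2n}$. Telescoping the product one factor at a time, as in the proof of Lemma \ref{62003}, and using that $M^s(x+j\omega,E)-M^s(x+j\omega',E')$ has only the top-left entry $\lambda_s\bigl(v(x+j\omega)-v(x+j\omega')\bigr)+(E'-E)$ nonzero, together with $|v(x+j\omega)-v(x+j\omega')|\le\max_{\mathbb{T}}|v'|\cdot\|j(\omega-\omega')\|=\max_{\mathbb{T}}|v'|\cdot j\,|\omega-\omega'|$ (the last equality because $j\,|\omega-\omega'|<\tfrac12$, so no wrap-around on $\mathbb{T}$), one obtains
\[
\bigl\|M^s_n(x,E,\omega)-M^s_n(x,E',\omega')\bigr\|\le n\,e^{(n-1)M_0^s}\Bigl(|E-E'|+n\,\lambda_s\max_{\mathbb{T}}|v'|\,|\omega-\omega'|\Bigr).
\]
Since $\|M^s_n\|\ge1$, the inequality $\log(1+t)\le t$ (used as in (\ref{60002})) upgrades this to the same bound for $\bigl|\log\|M^s_n(x,E,\omega)\|-\log\|M^s_n(x,E',\omega')\|\bigr|$, uniformly in $x$; dividing by $n$ and integrating over $\mathbb{T}$ gives
\[
|L^s_n(E,\omega)-L^s_n(E',\omega')|\le e^{(n-1)M_0^s}|E-E'|+n\,e^{(n-1)M_0^s}\lambda_s\max_{\mathbb{T}}|v'|\,|\omega-\omega'|.
\]
With $E'=E_0$, $\omega'=\omega_0$, the quantities $r^s_E(n)$ and $r^s_\omega(n)$ of the statement lie comfortably below the radii forcing each of the two terms to be at most $\tfrac1{200}L^s(E_0,\omega_0)$, and $r^s_\omega(n)<\tfrac1{4n}$, because the factor $e^{-5M_0^s n}$ dwarfs $n\,\lambda_s e^{(n-1)M_0^s}$ for $n$ large; the extra ``$5$'' in those exponents is precisely the room needed to re-run the same estimate at the doubled scale $2n$. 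Thus $|L^s_n(E,\omega)-L^s_n(E_0,\omega_0)|\le\tfrac1{100}L^s(E_0,\omega_0)$ for all $|E-E_0|<r^s_E(n)$, $|\omega-\omega_0|<r^s_\omega(n)$, and likewise at scale $2n$.

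\emph{Step 2 (transferring positivity).} As in Lemma \ref{62006}, subadditivity gives $n_4^s$ with $L^s_n(E_0,\omega_0)-L^s(E_0,\omega_0)<\tfrac1{100}L^s(E_0,\omega_0)$ and $L^s_n(E_0,\omega_0)-L^s_{2n}(E_0,\omega_0)<\tfrac1{100}L^s(E_0,\omega_0)$ for $n\ge n_4^s$; let $\check n_s$ (the quantity of (\ref{checkns})) be the maximum of $2n_4^s$ and the thresholds $n_2^s$ coming from the Schr\"odinger versions of Lemmas \ref{62001}--\ref{62002}, and set $r^s_E=r^s_E(\check n_s)$, $r^s_\omega=r^s_\omega(\check n_s)$. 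For $|E-E_0|<r^s_E$ and $|\omega-\omega_0|<r^s_\omega$, Step 1 (applied at scales $\check n_s$ and $2\check n_s$) gives $L^s_{\check n_s}(E,\omega)>\tfrac{49}{50}L^s(E_0,\omega_0)>0$ and $|L^s_{\check n_s}(E,\omega)-L^s_{2\check n_s}(E,\omega)|<\tfrac3{100}L^s(E_0,\omega_0)<\tfrac1{10}L^s_{\check n_s}(E,\omega)$, while $\beta(\omega)<\tfrac{c_s}{100}L^s(E_0,\omega_0)<\tfrac{c_s}{15}L^s_{\check n_s}(E,\omega)$ makes the LDT of Remark \ref{ldts} available at the shift $\omega$. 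Hence the Schr\"odinger counterparts of Lemmas \ref{62001}--\ref{62002} --- the iteration of the Avalanche Principle (Proposition \ref{prop:AP}) specialised to the unimodular $D=0$ case --- apply to $L^s_{\check n_s}(E,\omega)$ and make $|L^s(E,\omega)+L^s_{\check n_s}(E,\omega)-2L^s_{2\check n_s}(E,\omega)|$ exponentially small in $\check n_s$; therefore
\[
L^s(E,\omega)>L^s_{\check n_s}(E,\omega)-2\bigl|L^s_{\check n_s}(E,\omega)-L^s_{2\check n_s}(E,\omega)\bigr|-o(1)>\tfrac45 L^s(E_0,\omega_0),
\]
and the upper bound $L^s(E,\omega)<\tfrac65 L^s(E_0,\omega_0)$ is obtained symmetrically.

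The main (if mild) obstacle is the $\omega$-continuity in Step 1: one has to notice that displacing the shift moves the $j$-th transfer matrix by $\approx j\,|\omega-\omega'|$, which costs an extra power of $n$ compared with an $E$-perturbation --- hence $r^s_\omega$ carries $\check n_s^2$, not $\check n_s$, in its denominator --- and one must keep $|\omega-\omega_0|$ below $1/(4\check n_s)$ so that $\|j(\omega-\omega_0)\|=j\,|\omega-\omega_0|$ for every $j\le 2\check n_s$; the stated $r^s_\omega$ does both automatically. Everything after that is the machinery of Section 5 specialised to the Schr\"odinger cocycle.
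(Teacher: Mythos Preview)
Your proposal is correct and follows essentially the same route as the paper's proof: a telescoping estimate on $M^s_n$ to control $|L^s_n(E,\omega)-L^s_n(E_0,\omega_0)|$ (picking up the extra factor of $n$ from the $\omega$-displacement, whence the $\check n_s^2$ in $r^s_\omega$), followed by the Schr\"odinger analogue of Lemma \ref{62006} via the Avalanche Principle iteration of Lemmas \ref{62001}--\ref{62002}. The only cosmetic difference is that you bundle the $E$- and $\omega$-perturbations into a single estimate, whereas the paper treats them in two separate lines before combining; your observation that $\|M^s_n\|\ge1$ obviates the bad set $\mathbb{B}_D$ is exactly the simplification the paper exploits implicitly.
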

\begin{proof}
\begin{eqnarray}
&&\bigg |\|M^s_n(x,E,\omega_0)\|-\|M^s_n(x,E,\omega)\|\bigg |
\leq \|M^s_n(x,E,\omega_0)-M^s_n(x,E,\omega)\|\\
&\leq&\sum_{j=0}^{n-1}\bigg(\|M^s(x+(n-1)\omega_0,E,\omega_0)\times\cdots\times
M^s(x+(j+1)\omega_0,E,\omega_0)\|\times \nonumber\\&\ &\ \ \ \
\|M^s(x+j\omega_0),E,\omega_0)-
M^s(x+j\omega,E,\omega)\|\times\|M^s(x+(j-1)\omega,E,\omega)\times\cdots\times
M^s(x,E,\omega)\|\bigg)
\nonumber\\
&\leq&n^2\lambda_s V\exp\left((n-1)M_0^s\right)|\omega_0-\omega|\nonumber,
\end{eqnarray}where $V=\max_{\mathbb{T}}(v'(x))$.
Like the proof of Lemma  \ref{62003}, similar computations  show that for any $n\ge n^s_4$, it has
\[|L^s_n(E_0,\omega)-L^s_n(E_0,\omega_0)|\leq\frac{L^s(E_0,\omega_0)}{100},\]
when $|\omega-\omega_0|\le r^s_{\omega}(n)$, where $r^s_{\omega}(n)=\frac{L^s(E_0,\omega_0)}{400n\lambda_s V}\exp(-nM_0^s)$. Similarly,
\[|L^s_n(E_0,\omega_0)-L^s_n(E,\omega_0)|\leq\frac{L^s(E_0,\omega_0)}{100},\]
when $|E-E_0|\le r_E^s(n)$, where $r^s_E=\frac{L^s(E_0,\omega_0)}{200n}\exp(-nM_0^s)$.
Combining them with (\ref{disE}), we have
\[|L^s_n(E,\omega)-L^s_n(E_0,\omega_0)|\leq\frac{L^s(E_0,\omega_0)}{50},\]
when $|\omega-\omega_0|\le r_{\omega}(n)$ and $|E-E_0|\le r_E(n)$. Thus, Lemma \ref{62002} holds for $L^s_n(E,\omega)$ and this lemma is proved similarly as Lemma \ref{62006} with the settings $r^s_{\omega}=r^s_{\omega}(\max(n^s_2,2n^s_4))$ and $r^s_E=r^s_E(\max(n^s_2,2n^s_4))$, where
\begin{equation}
  \label{n2s}n_2^s:=\frac{60M_0^s}{\bar c_s\min^2\left(1,\frac{L^s(E_0,\omega_0)}{100}\right)}
\end{equation}
and $n_4^s$ is the integer which makes $L_n^s(E_0,\omega_0)-L^s(E_0,\omega_0)<\frac{L^s(E_0,\omega_0)}{100}$ for any $n\ge n_4^s$.
\end{proof}

\section{Proofs of the rest Theorems }
Before showing the proofs,  we first need  the following Lemma (Theorem 1.5 in \cite{AJS}) to get  the  uniform convergence of $u^u_n(x,E,\omega)$:
\begin{lemma}\label{contin}
The functions $\mathbb{R}\times C^{\omega}(\mathbb{T},\mathcal{L}(\mathbb{C}^d,\mathbb{C}^d))\ni (\omega,A)\mapsto L_k(\omega,A)\in[-\infty,\infty)$ are continuous at any $(\omega',A')$ with $\omega'\in \mathbb{R}\setminus \mathbb{Q}$. Here $C^{\omega}(\mathbb{T},\mathcal{L}(\mathbb{C}^d,\mathbb{C}^d))$ means the set of the functions which are complex analytic from $\mathbb{T}$ to $\mathcal{L}(\mathbb{C}^d,\mathbb{C}^d)$.
\end{lemma}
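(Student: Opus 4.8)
The plan is to recall the proof strategy of Theorem~1.5 of \cite{AJS}. The first step is a reduction to the top Lyapunov exponent: since $L_k(\omega,A)$ is the sum of the $k$ largest Lyapunov exponents of the cocycle $(\omega,A)$, it coincides with the top Lyapunov exponent of the exterior-power cocycle $(\omega,\wedge^kA)$; because $A\mapsto\wedge^kA$ maps $C^\omega(\mathbb{T},\mathcal{L}(\mathbb{C}^d,\mathbb{C}^d))$ continuously into $C^\omega(\mathbb{T},\mathcal{L}(\wedge^k\mathbb{C}^d,\wedge^k\mathbb{C}^d))$, preserving analyticity on any common strip, and leaves $\omega$ untouched, it suffices to prove the statement for $k=1$, i.e.\ for the top exponent $L:=L_1$.

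The second step is upper semicontinuity, which I would establish at every point. Writing $L(\omega,A)=\inf_{n\ge1}L_n(\omega,A)$ with $L_n(\omega,A)=\frac1n\int_{\mathbb{T}}\log\|A(x+(n-1)\omega)\cdots A(x)\|\,dx$, one notes that for each fixed $n$ the integrand is subharmonic in the complexified variable and, for $A$ in a neighbourhood of $A'$, is locally uniformly bounded above while its negative part stays uniformly integrable (at worst a logarithmic singularity coming from the zeros of a determinant); hence each $L_n$ is jointly continuous in $(\omega,A)$, and $L$, an infimum of continuous functions, is upper semicontinuous everywhere. This already handles the degenerate case $L_1(\omega',A')=-\infty$.

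The third and decisive step is lower semicontinuity at the irrational point $\omega'$. Here I would complexify, setting $A_\epsilon(x):=A(x+i\epsilon)$, and study $\epsilon\mapsto L(\omega,A_\epsilon)$ on $(-\rho,\rho)$, which subharmonicity renders convex. The core input from \cite{AJS} is the quantization of the acceleration: for irrational $\omega$ the right $\epsilon$-derivative of $L(\omega,A_\epsilon)$ is integer-valued (after the standard normalization), obtained by identifying it with a winding number and running a rational-approximation argument. Convexity together with integer slopes forces $\epsilon\mapsto L(\omega,A_\epsilon)$ to be piecewise affine with non-accumulating break points; on each affine piece the dependence on $(\omega,A)$ is controlled, and combining this with the upper semicontinuity above (handling the possibly singular value $\epsilon=0$ by a limiting argument, using $L(\omega,A_\epsilon)\to L(\omega,A)$) upgrades it to joint continuity at $(\omega',A')$. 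When $\omega'$ is not too Liouville and $L>0$, an alternative route combines a frequency-uniform large deviation estimate for $\frac1n\log\|A_n\|$ — of Bourgain--Jitomirskaya type (\ref{ldtir}), or furnished here by Theorem~\ref{erg} — with the Avalanche Principle, telescoping $|L_{2n}-L_n|$ across scales to make $|L(\omega,A)-L_n(\omega,A)|$ small uniformly near $(\omega',A')$.

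The hard part is exactly this last step: upper semicontinuity is soft, but a sequence of frequencies $\omega_j\to\omega'$, or their rational approximants $p_s/q_s$, can carry resonances that vanish in the limit, and excluding a downward jump of $L$ is the entire difficulty — it is precisely what the quantization machinery of \cite{AJS} (or, for $d=2$ and $SL(2,\mathbb{C})$, the Bourgain--Jitomirskaya argument in \cite{BJ}) is built to rule out. Since \cite{AJS} carries this out in full $GL(d,\mathbb{C})$ generality, I would simply invoke it.
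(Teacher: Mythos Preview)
Your proposal is essentially correct and, in its bottom line, coincides with the paper: the paper does not prove this lemma at all but simply quotes it as Theorem~1.5 of \cite{AJS}, and you end by invoking exactly that reference. The outline you give---reduction to the top exponent via exterior powers, upper semicontinuity from $L=\inf_n L_n$, and lower semicontinuity at irrational $\omega'$ through complexification and the quantized acceleration---is a faithful sketch of the \cite{AJS} argument, so there is no genuine discrepancy, only that you have supplied a roadmap where the paper gives a bare citation.
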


\begin{lemma}\label{uuupblem}
  Assume $L(E_0,\omega_0)>0$, $\beta(\omega_0)<c_{v,a}\min\left(\frac{L(E_0,\omega_0)}{15},|D|\right)$ and $\lambda_v>\lambda_0$. Then there exists $n_5=n_5(\omega_0, E_0,r_E,r_{\omega},$
  $\lambda_a,a,\lambda_v,v)$ such that for any $n>n_5$, $x\in \mathbb{T}$, $E\in [E_0- r_E,E_0+r_E]$ and irrational $\omega\in [\omega_0-r_{\omega},\omega_0+r_{\omega}]$,
  \begin{equation}\label{uuupb}
  u_n^u(x,E,\omega)\leq \frac{6}{5}L(E,\omega).
  \end{equation}
\end{lemma}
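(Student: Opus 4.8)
The plan is to write
\[
u_n^u(x,E,\omega)-L(E,\omega)=(u_n^u(x,E,\omega)-L_n(E,\omega))+(L_n(E,\omega)-L(E,\omega)),
\]
and to bound each summand by a small multiple of $L_0:=L(E_0,\omega_0)$. By Remark \ref{t3}, $L(E,\omega)>\frac34 L_0$ throughout the region $E\in[E_0-r_E,E_0+r_E]$, $\omega\in[\omega_0-r_\omega,\omega_0+r_\omega]$ with $\beta(\omega)<\frac{c_{v,a}}{100}L_0$ (the regime in which (\ref{uuupb}) is used, cf. Theorem \ref{holder}); hence it suffices to show that, for $n$ past a threshold depending only on the listed data, the first summand is $\le\frac1{40}L_0$ for every admissible $x$ and the second is $\le\frac1{10}L_0$, since then the sum is $\le\frac18 L_0<\frac15\cdot\frac34 L_0<\frac15 L(E,\omega)$, which is (\ref{uuupb}). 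The value $n_5$ is the maximum of the finitely many thresholds produced below.

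For the second summand, $L_n(E,\omega)-L(E,\omega)\ge0$ since $L=\inf_n L_n$. To see it is $\le\frac1{10}L_0$ for large $n$ uniformly on the region, run the avalanche-principle estimates of Lemmas \ref{62001}, \ref{62002}, \ref{62006} with $\delta=\min(1,\delta_0,\frac1{15}L_n(E,\omega))$: on the region $L_n(E,\omega)\asymp L_0$ once $n$ is large, so $\delta\asymp L_0$ and the hypotheses of Lemma \ref{62002} hold with thresholds depending on $(E,\omega)$ only through $L_0$; Lemma \ref{62002} then gives $|L(E,\omega)+L_n(E,\omega)-2L_{2n}(E,\omega)|\le\exp(-\frac{\bar c_u}{5}\delta^2 n)$, and feeding in the estimate $0\le L_n(E,\omega)-L_{2n}(E,\omega)<\frac1{25}L_0$ (obtained as in the proof of Lemma \ref{62006}, uniformly on the region, via Lemma \ref{62003} in the $E$-variable and the joint continuity Lemma \ref{contin} in the $\omega$-variable) yields $L_n(E,\omega)-L(E,\omega)\le2\cdot\frac1{25}L_0+\exp(-cn)<\frac1{10}L_0$. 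The role of Lemma \ref{contin} is precisely to upgrade the single-point statements of Section~5 to ones uniform on a neighbourhood of $(E_0,\omega_0)$.

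For the first summand, use (\ref{re1}) and $L_n=L_n^a-D$ to write
\[
u_n^u(x,E,\omega)-L_n(E,\omega)=(u_n^a(x,E,\omega)-L_n^a(E,\omega))+\left(-\frac1{2n}\sum_{j=0}^{n-1}d(x+j\omega,\omega)+D\right).
\]
The first bracket is controlled for \emph{every} $x$ by the usual subharmonicity argument: for each fixed $y$ with $|y|<\rho/2$, $\xi\mapsto u_n^a(\xi+iy,E,\omega)$ is subharmonic (as $M_n^a$ is entire in its variable), is bounded above by the $n$-independent constant of (\ref{upb}), and has Riesz mass over $\Omega_1$ bounded uniformly in $(y,n)$ as in Lemma \ref{muomu}; so the proof of Lemma \ref{ldta} applies with $x$ replaced by $x+iy$ and gives $u_n^a(x+iy,E,\omega)\le\widehat L_n(y)+\frac1{200}L_0$ off a set of $x$ of measure $<\exp(-cn)$, where $\widehat L_n(y):=\int_{\mathbb T}u_n^a(x+iy,E,\omega)\,dx$. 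Since $y\mapsto\widehat L_n(y)$ is convex with $\widehat L_n(0)=L_n^a(E,\omega)$ and is bounded above by the constant of (\ref{upb}), one has $\widehat L_n(y)\le L_n^a(E,\omega)+C|y|$ for $|y|\le\rho/4$ with $C$ independent of $n$; combining this, the exceptional-set bound, and the $n$-independent upper bound with the sub-mean-value inequality for $u_n^a(\cdot,E,\omega)$ over a disc $D(x_0,r_0)$ of fixed small radius $r_0\le\min(\rho/4,\frac{L_0}{200C})$ gives $u_n^a(x_0,E,\omega)\le L_n^a(E,\omega)+\frac1{80}L_0$ for all $x_0$ once $n$ is large. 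For the second bracket, Lemma \ref{ldtd} with $\delta=\frac1{80}L_0$ (admissible since $\beta(\omega)<\frac{c_{v,a}}{100}L_0<c_{v,a}\delta$) gives $-\frac1{2n}\sum_{j=0}^{n-1}d(x+j\omega,\omega)\le-D+\frac1{80}L_0$ off a set $\mathcal B_n=\mathcal B_n(E,\omega)$ of measure $<\exp(-\bar c_a\frac{L_0}{80}n)$. Adding the two brackets gives the first summand $\le\frac1{40}L_0$ for $x\notin\mathcal B_n$, and together with the previous paragraph this proves (\ref{uuupb}).

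The point needing the most care is the second bracket just treated: $-\frac1{2n}\sum_j d(x+j\omega,\omega)$ is \emph{not} bounded above, since it blows up near the orbit of the zeros of $a$ on $\mathbb T$, so the clean pointwise bound (\ref{uuupb}) holds only for $x$ outside the exponentially small set $\mathcal B_n$ — which is exactly what is needed where (\ref{uuupb}) is later invoked in taking $n\to\infty$; if in addition $a$ does not vanish on $\mathbb T$, then $d(\cdot,\omega)$ is Lipschitz and $\frac1{2n}\sum_j d(x+j\omega,\omega)\to D$ uniformly in $x$ by quantitative equidistribution (uniform over the $\omega$'s in play), and (\ref{uuupb}) then holds for every $x\in\mathbb T$. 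The other delicate point is the bookkeeping in the second paragraph, namely checking that every threshold coming out of Section~5 depends on $(E,\omega)$ only through $L_0$; this is where the joint continuity of the Lyapunov exponent at the irrational $\omega_0$ (Lemma \ref{contin}) is indispensable.
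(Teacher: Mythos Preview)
Your approach is correct but substantially different from the paper's. The paper gives a three-line soft argument: Furman's theorem on uniform upper semicontinuity of subadditive cocycles over uniquely ergodic systems yields $\limsup_n u_n^a(x,E,\omega)\le L^a(E,\omega)$ uniformly in $x$ (and in $E$ by compactness and Lemma~\ref{contin}); likewise $\limsup_n \tfrac1{2n}\sum_j d(x+j\omega,\omega)\le D$ uniformly; the lemma is then declared to follow from (\ref{re1}). Your route instead rebuilds everything quantitatively from the LDT/avalanche machinery of Sections~4--5 together with the sub-mean-value trick to pass from ``off a small set'' to ``pointwise'' for the $u_n^a$ part.

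You also put your finger on a genuine issue the paper's short argument glosses over. To deduce an \emph{upper} bound on $u_n^u$ from (\ref{re1}) one needs not $\limsup_n\tfrac1{2n}\sum_j d\le D$ but the opposite inequality $\liminf_n\tfrac1{2n}\sum_j d\ge D$ uniformly in $x$; this fails exactly when $a$ has zeros on $\mathbb T$, since then $d$ is only upper semicontinuous (unbounded below) and its ergodic averages cannot be bounded below uniformly. Hence the lemma as stated is literally true only when $a$ is zero-free on $\mathbb T$; in the general case your weaker conclusion---(\ref{uuupb}) off an exponentially small set $\mathcal B_n$---is the correct statement, and as you observe it is all that the subsequent proof of Theorem~\ref{sldt} actually uses (the $d$-sums there are controlled separately via Lemma~\ref{ldtd} anyway). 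The paper's proof does go through verbatim in the zero-free case, because then $d$ is continuous and $\tfrac1{2n}\sum_j d\to D$ uniformly in both directions.
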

\begin{proof}
  Furman \cite{F} proved the uniformity   in x for any continuous
cocycle on a uniquely ergodic system. Then, due to the continuity
of $L^a$ in $E$ by Lemma \ref{contin} and the compactness, we have
\begin{equation}\nonumber
  \limsup\limits_{n\to\infty}u^a_n(x,E,\omega)\leq L^a(E,\omega).
\end{equation}uniformly in $x\in\mathbb{T}$ and $E\in [E_0-r_E,E_0+r_E]$. Similarly,
  \begin{equation}\nonumber
    \limsup\limits_{n\to\infty}\frac{1}{2n}\sum_{j=0}^{n-1}d(z+j\omega,\omega)\leq D
  \end{equation}
  uniformly in $x\in\mathbb{T}$. Thus, this lemma follows directly from (\ref{re1}).
\end{proof}
\begin{remark}\label{lar-lam1}
By Theorem \ref{posile}, if $\lambda_v>\lambda_p$, then $L(E,\omega)$ is always positive. Thus, (\ref{uuupb})  holds for any $E\in\mathcal{E}$ and  irrational $\omega$.
\end{remark}
\begin{remark}\label{schlar}
  For the Schr\"odinger operators, due to the joint continuity of Lyapunov exponent in $(E,\omega)$, we have that if $L^s(E_0,\omega_0)>0$, $\beta(\omega_0)<\frac{c_s}{15}L^s(E_0,\omega_0)$ and $\lambda_s>\lambda_0^s$, then for any $n>n^s_5$, $x\in \mathbb{T}$, $|E-E_0|\leq r^s_E$ and $|\omega-\omega_0|\leq r^s_{\omega}$, it has
  \begin{equation}\label{uupb}
  u^s_n(x,E,\omega)\leq \frac{6}{5}L^s(E,\omega).
  \end{equation}What's more, by Remark \ref{schpole}, if $\lambda>\lambda_p^s(\frac{1}{50})$, then for any $n\ge 1$, $x\in\mathbb{T}$, $E\in\mathcal{E}_s$ and  irrational $\omega$,
  \begin{equation}
    \label{uupblam}
    u^s_n(x,E,\omega)<\frac{51}{50}\log\lambda\ \mbox{and}\ L_{2n}^s(E,\omega)>\frac{9}{10}L_n^s(E,\omega).
  \end{equation}
\end{remark}~\\

\begin{proof}[Proof of Theorem \ref{sldt}]Remark \ref{t3} proves the property that $L(E,\omega)$ is positive in the neighborhood of $(E_0,\omega_0)$. Now we start the proofs of the  sharp LDTs for $u_n^u(x,E,\omega)$ and $u_n(x,E,\omega)$.

Because
\[M^u_n(x+k\omega,E,\omega)M^u_k(x,E,\omega)=M^u_k(x+n\omega,E,\omega)M^u_n(x,E,\omega)\]
and $\|A^{-1}\|=\|A\|\geq 1$ if $\det A=1$, so
\[\left
|\log\|M_n^u(x+k\omega,E,\omega)\|-\log\|M_n^u(x,E,\omega)\|\right|\leq
\log\|M^u_k(x,E,\omega)\|+\log\|M^u_k(x+n\omega,E,\omega)\|.\]
Due to (\ref{uuupb}), if $k^2>\frac{2n^2_5(M_0-D)}{L(E_0,\omega_0)}\ge n_5^2$, $|E-E_0|\leq r_E$ and $|\omega-\omega_0|\leq r_{\omega}$, then
\[\left|u_n^u(x+k\omega,E,\omega)-u_n^u(x,E,\omega)\right|\leq \frac{12k}{5n}L(E,\omega).\]
It implies that
\begin{eqnarray}\label{21124}\left|u^u_n(x,E,\omega)-\frac{1}{k}\sum_{j=1}^ku_n^u(x+j\omega,E,\omega)\right|&\leq&
\frac{1}{k}\left (\sum_{j=1}^{n_5}+\sum_{j=n_5+1}^k \right )[u_n^u(x,E,\omega)-u_n^u(x+j\omega,E,\omega)]\nonumber\\
&\leq &\frac{1}{nk}\left [\sum_{j=1}^{n_5}2jM_0+\sum_{j=n_5+1}^k   \frac{12}{5}jL(E,\omega)\right ]\nonumber\\
&& \ \ \ -\frac{1}{2nk}\left[\sum_{j=1}^{n_5}\left(\sum_{m=0}^{j-1}d(x+m\omega)-2jD\right)+\sum_{j=1}^{n_5}\left(\sum_{m=0}^{j-1}d(x+(m+n)\omega)-2jD\right)\right]
.\nonumber
\end{eqnarray}If
\[\sum_{j=1}^{n_5}\left(\sum_{m=0}^{j-1}d(x+m\omega)-2jD\right)<-\frac{nk}{4}\kappa L(E,\omega),\]
then there exists $1\leq j\leq n_5$ such that
\[\sum_{m=0}^{j-1}d(x+m\omega)-2jD<-\frac{n}{4}\kappa L(E,\omega).\]
By Lemma \ref{ldtd}, we have that if $\beta(\omega)< \frac{c_{a,v}}{4}\kappa L(E,\omega)$, then
\[\mes \left(\left\{x\in\mathbb{T}:\sum_{m=0}^{j-1}d(x+m\omega)-2jD<-\frac{n}{4}\kappa L(E,\omega)\right\}\right)\leq \exp\left(-\bar c_{v,a}\frac{n}{4j}\kappa L(E,\omega)j\right)=\exp\left(-\frac{\bar c_{v,a}}{4}\kappa L(E,\omega)n\right).\]
Thus, there exist
\begin{equation}
  \label{n6}n_6:=\frac{16\log\frac{32}{\bar c_{v,a}\kappa L(E_0,\omega_0)}}{\bar c_{v,a}\kappa L(E_0,\omega_0)}
\end{equation} and  $\mathbb{B}$ satisfying
\[\mes(\mathbb{B})\leq 2n_5 \exp\left(-\frac{\bar c_{v,a}}{4}\kappa L(E,\omega)n\right)\leq \exp\left(-\frac{\bar c_{v,a}}{8}\kappa L(E,\omega)n\right),\]
such that if $x\not\in \mathbb{B}$ and $n=\frac{4k}{\kappa} >n_6$, then
\[-\frac{1}{2nk}\left[\sum_{j=1}^{n_5}\left(\sum_{m=0}^{j-1}d(x+m\omega)-2jD\right)+\sum_{j=1}^{n_5}\left(\sum_{m=0}^{j-1}d(x+(m+n)\omega)-2jD\right)\right]
\leq \frac{1}{4}\kappa L(E,\omega)\]
and
\begin{eqnarray*}\left|u^u_n(x,E,\omega)-\frac{1}{k}\sum_{j=1}^ku_n^u(x+j\omega,E,\omega)\right|
&\leq &\frac{1}{nk}\left [\sum_{j=1}^{n_5}2j(M_0-D)+\sum_{j=n_5+1}^k   \frac{12}{5}jL(E,\omega)\right ]+\frac{1}{4}\kappa L(E,\omega)\nonumber\\
&\leq &\frac{n_5^2(M_0-D)}{nk}+\frac{6k}{5n}L(E,\omega)+\frac{1}{4}\kappa L(E,\omega)\\
&\leq &\frac{\kappa}{8}L(E_0,\omega_0)+\frac{11}{20}\kappa L(E,\omega)< \frac{3}{4}\kappa L(E,\omega).
\end{eqnarray*}
Let $\delta=\frac{\kappa}{4}L(E,\omega)$ in Remark \ref{ergu} and \begin{equation}
  \label{checkn}\check n=\max(n_1,n_2,n_3,2n_4,n_5,n_6),
\end{equation} where $n_1,\ n_2,\ n_3,\ n_4,\ n_5$ and  $n_6$ are defined   in (\ref{n1}), (\ref{n2}), (\ref{n3}), the proof of Lemma \ref{62006}, the proof of Lemma \ref{uuupblem} and (\ref{n6}), respectively. Then, redefine $r_E$ as
\[r_E=r_E(\check n)=\frac{L(E_0,\omega_0)}{200\check n}\exp\left((1-\check n)M_0-2|D|\check n\right).\]
Therefore, if $\beta<c_{v,a}\min\left(\frac{\kappa}{4} L(E,\omega),\frac{L(E_0,\omega)}{15},|D|\right)$, $\lambda_v>\lambda_0$ and $n>\check n$, then for any $|E-E_0|\leq r_E$ and  $|\omega-\omega_0|\leq r_{\omega}$,
\begin{eqnarray}\label{sldtue}
  \mes\left\{x:\left|u_n^u(x,E,\omega)-L_n(E,\omega)\right|<\kappa L(E,\omega)\right\}&<&\exp\left(-\frac{\bar c_{v,a}}{8}\kappa L(E,\omega)n\right)+\exp\left(-\frac{\bar c_{v,a}}{25}\kappa^2 L(E,\omega)n\right)\\
  &<&\exp\left(-\frac{\bar c_{v,a}}{30}\kappa^2 L(E,\omega)n\right).\nonumber
\end{eqnarray}Similarly, we can get the sharp LDT for $u_n(x,E,\omega)$ by  the following relationship between $u_n(x,E,\omega)$ and $u_n^u(x,E,\omega)$:
\[u_n(x,E,\omega)=u_n^u(x,E,\omega)+\frac{1}{n}\sum_{j=0}^{n-1}\left(\log|\lambda_a\tilde a(x+j\omega)|-\log|\lambda_aa(x+(j+1)\omega)|\right).\]
\end{proof}
\begin{remark}\label{schlarsldt}
It is easily seen that if $\lambda_v>\lambda_p$, then the sharp LDT (\ref{sldte}) holds for any $E\in\mathcal{E}$ and irrational $\omega$.
\end{remark}
\begin{remark}
  For the Schr\"odinger operators, we have similarly that if $k=\frac{\kappa}{4}n>n^s_5\left(\frac{2M^s_0}{L^s(E_0,\omega_0)}\right)^{\frac{1}{2}}$, $|E-E_0|\leq r^s_E$ and $|\omega-\omega_0|\leq r^s_{\omega}$, then
  \[\left|u_n^s(x+k\omega,E,\omega)-u_n^s(x,E,\omega)\right|\leq \frac{12k}{5n}L^s(E,\omega)\]
and
\begin{eqnarray}\label{schdis}\left|u^s_n(x,E,\omega)-\frac{1}{k}\sum_{j=1}^ku_n^s(x+j\omega,E,\omega)\right|&\leq&
\frac{1}{k}\left (\sum_{j=1}^{n_5}+\sum_{j=n_5+1}^k \right )[u_n^s(x,E,\omega)-u_n^s(x+j\omega,E,\omega)]\\
&\leq &\frac{1}{nk}\left [\sum_{j=1}^{n_5}2jM_0^s+\sum_{j=n_5+1}^k   \frac{12}{5}jL^s(E,\omega)\right ]\nonumber\\
&\leq & \frac{n_5^2M_0^s}{nk}+\frac{6k}{5n}L^s(E,\omega)\leq \frac{3}{5}\kappa L^s(E,\omega).\nonumber
\end{eqnarray}
Combining it with Remark \ref{ergs}, we have that if $\beta(\omega)<\frac{1}{5}c_s\kappa L^s(E,\omega)$ and $\lambda>2\epsilon_0^{-1}$, then
\begin{equation}
  \label{schsldte}
  \mes\left\{x:\left|u_n^s(x,E,\omega)-L^s_n(E,\omega)\right|<\kappa L^s(E,\omega)\right\}<\exp(-\frac{\bar c_s}{10}\kappa^2L^s(E,\omega)).
\end{equation}
\end{remark}~\\

\begin{proof}[Proof of Theorem \ref{holder}]
Similar to the proofs of Lemma \ref{62001} and Lemma \ref{62002},  by the sharp LDT (\ref{sldtue}) with $\kappa=\frac{1}{20}$, the Avalanche Principle can be applied again, and we have
\begin{equation}\label{apforl}
 |L(E,\omega)+L_n(E,\omega)-2L_{2n}(E,\omega)|<\exp(-\frac{1}{48000}\bar c_{v,a}L(E,\omega)n)\leq \exp(-10^{-5}\bar c_{v,a}L(E_0,\omega_0)n).
\end{equation}
On the other hand, by (\ref{disE}), (\ref{60002}) and Lemma \ref{uuupb}, it implies that
\begin{eqnarray*}&&\left |\log\|M_n^u(x,E_1,\omega)\|-\log \|M_n^u(x,E_2,\omega)\| \right |\leq
 \frac{\left|\|M_n^a(x,E_1,\omega)\|-\|M_n^a(x,E_2,\omega)\|\right|}{\prod_{j=0}^{n-1}|d(x+j\omega)|^{\frac{1}{2}}}\\
 &\leq&\sum_{j=0}^{n-1}\left(\|\prod_{m=1}^{n-j}M^a(x+(n-m)\omega,E_1,\omega)\|\times
\|\prod_{m=j-1}^0M^a(x+m\omega,E_2,\omega)\|\right)\times \exp(-nD)\nonumber\\
&&\times\exp\left (-\sum_{j=0}^{n-1}\frac{1}{2}b(x+j\omega)+nD\right )\times|E_1-E_2|\nonumber\\
&=&\left (\sum_{j=1}^{n_5}+\sum_{j=n_5+1}^{n-n_5}
 +\sum_{j=n-n_5+1}^{n}\right ) \left(\|\prod_{m=1}^{n-j}M^a(x+(n-m)\omega,E_1,\omega)\|\times
\|\prod_{m=j-1}^0M^a(x+m\omega,E_2,\omega)\|\right)\times \exp(-nD)\nonumber\\
&&\times\exp\left (-\sum_{j=0}^{n-1}\frac{1}{2}b(x+j\omega)+nD\right )\times|E-E_0|\nonumber\\
&\leq &\left\{2\sum_{j=1}^{n_5}\exp\left((M_0-D)n_5+\frac{6}{5}\max_{i=1,2}\left(L(E_i,\omega)\right)n\right)+\sum_{j=n_5+1}^{n-n_5}\exp\left(\frac{6}{5}\max_{i=1,2}\left(L(E_i,\omega)\right)n\right)\right\}\nonumber\\
&&\times\exp\left (-\sum_{j=0}^{n-1}\frac{1}{2}b(x+j\omega)+nD\right )\times|E_1-E_2|\nonumber\\
&\leq &\sum_{j=0}^{n-1}\exp\left(\frac{3}{2}L(E_0,\omega_0)n\right)\times\exp\left (-\sum_{j=0}^{n-1}\frac{1}{2}b(x+j\omega)+nD\right )\times|E_1-E_2|\nonumber.
\end{eqnarray*}
Due to Lemma \ref{ldtd}, we have that there exists $\mathbb{B}'$ satisfying $\mes \mathbb{B}'<\exp\left(-\bar c_{v,a}L(E_0,\omega_0)n\right)$ such that if $x\not\in\mathbb{B}'$, then
\[\left |\log\|M_n^u(x,E_1,\omega)\|-\log \|M_n^u(x,E_2,\omega)\| \right |\leq n\exp\left(2L(E_0,\omega_0)n\right)|E_1-E_2|.\]
Thus
\begin{eqnarray*}
|L_n(E_1,\omega)-L_n(E_2,\omega)|&=&
\int_{\mathbb{T}\backslash\mathbb{B}'}|u_n^u(x,E_1,\omega)-u_n^u(x,E_2,\omega)|dx+\int_{\mathbb{B}'}|u_n^u(x,E_1,\omega)-u_n^u(x,E_2,\omega)|dx\\
&<& \exp
(2L(E_0,\omega_0)n)|E_1-E_2|+2\sup\limits_{E\in\mathcal{E}}\|M^u(\cdot,E,\omega)\|_{L^2(\Omega)}\exp(-\frac{\bar c_{v,a}}{2}L(E_0,\omega_0)n)\nonumber.
\end{eqnarray*}
Combining it with (\ref{apforl}), we have
\begin{eqnarray*}
  |L(E_1,\omega)-L(E_2,\omega)|&\leq &
|L(E_1,\omega)+L_n(E_1,\omega)-2L_{2n}(E_1,\omega)|+|L(E_2,\omega)+L_n(E_2,\omega)-2L_{2n}(E_2,\omega)|\\
&\ &\ \ +|L_n(E_1,\omega)-L_n(E_2,\omega)|+2|L_{2n}(E_1,\omega)-L_{2n}(E_2,\omega)|\nonumber\\
&< &2\exp(-10^{-5}\bar c_{v,a}L(E_0,\omega_0)n)+2\exp
(4L(E_0,\omega_0)n)|E_1-E_2|\nonumber\\
&\ &\ \ +4\sup\limits_{E\in\mathcal{E}}\|M^u(\cdot,E,\omega)\|_{L^2(\Omega)}\exp(-\frac{\bar c_{v,a}}{2}L(E_0,\omega_0)n)\nonumber\\
&< &3\exp(-10^{-5}\bar c_{v,a}L(E_0,\omega_0))+2\exp
(4L(E_0,\omega_0)n)|E_1-E_2|.
\end{eqnarray*}
Note that $M_0-D\ge L(E_0,\omega_0)$. Thus, if $|E_1-E_2|<2r_E$, then there exists an integral $n>\check n$ such that
\[\exp\left(-(10^{-5}\bar c_{v,a}+4)L(E_0,\omega_0)(n+1)\right)\leq |E_1-E_2|\leq  \exp\left(-(10^{-5}\bar c_{v,a}+4)L(E_0,\omega_0)n\right).\]
Then
\[
 |L(E,\omega_0)-L(E_0,\omega_0)|\le 5\exp(-10^{-5}\bar c_{v,a} L(E_0,\omega_0)n)<\exp(-(2\times 10^5)^{-1}\bar c_{v,a}L(E,\omega)(n+1))\leq |E-E_0|^{\tau},
\]where $\tau=\frac{\bar c_{v,a}}{2\bar c_{v,a}+8\times 10^5}$.
\end{proof}
\begin{remark}Let us outline the proof of Remark \ref{schholder}. If $n>\frac{10M_0^s}{L^s(E_0,\omega_0)}n^s_5$, then
  \begin{eqnarray}
    &&\left|\log\|M_n^s(x,E,\omega_1)\|-\log\|M_n^s(x,E,\omega_2)\right|\leq \|M_n^s(x,E,\omega_1)-M_n^s(x,E,\omega_2)\|\\
    &\leq &\left (\sum_{j=1}^{n_5}+\sum_{j=n_5+1}^{n-n_5}
 +\sum_{j=n-n_5+1}^{n}\right ) \left(\|\prod_{m=1}^{n-j}M^s(x+(n-m)\omega_1,E,\omega_1)\|\times
\|\prod_{m=j-1}^0M^s(x+m\omega_2,E,\omega_2)\|\times \left(j\lambda V|\omega_1-\omega_2|\right)\right) \nonumber\\
 &\leq &\left\{2\sum_{j=1}^{n_5}\exp\left(n_5M_0^s+\frac{6}{5}\max_{i=1,2}\left(L^s(E,\omega_i)\right)(n-n_5)\right)+\sum_{j=n_5+1}^{n-n_5}\exp\left(\frac{6}{5}\max_{i=1,2}\left(L^s(E,\omega_i)\right)n\right)\right\}\times \lambda_s nV|\omega_1-\omega_2| \nonumber\\
 &\leq&\sum_{j=1}^n\exp\left(\frac{7}{5}L^s(E_0,\omega_0)n\right)\times\lambda_s nV|\omega_1-\omega_2|.\nonumber
  \end{eqnarray}Thus, there exists
  \begin{equation}
    \label{n7}n^s_7:=-\frac{5\log\frac{L^s(E_0,\omega_0)}{5\lambda_s V}}{L^s(E_0,\omega_0)}
  \end{equation}such that for any $n\ge n^s_7$,
  \[\left|L^s_n(E,\omega_1)-L^s_n(E,\omega_2)\right|\leq \lambda nV\exp\left(\frac{7}{5}L^s(E_0,\omega_0)n\right)\times|\omega_1-\omega_2|<\exp(2L^s(E_0,\omega_0)n)\times|\omega_1-\omega_2|.\]
  Similarly, for any $n>\frac{10M_0^s}{L^s(E_0,\omega_0)}n^s_5$,
   \[\left|L^s_n(E_1,\omega)-L^s_n(E_2,\omega)\right|\leq \exp\left(\frac{7}{5}L^s(E_0,\omega_0)n\right)\times|E_1-E_2|.\]
  On the other hand, applying (\ref{schsldte}) with $\kappa=\frac{1}{20}$ to the Avalanche Principle, we have
  \[\left|L^s(E,\omega)+L_n^s(E,\omega)-2L_{2n}^s(E,\omega)\right|\leq \exp\left(-10^{-5}\bar c_s L^s(E_0,\omega_0)n\right).\]
  Let
  \begin{equation}
    \label{checkns}
    \check n_{s}:=\max\left(n_2^s,2n_4^s,80\left(\frac{2M_0^s}{L^s(E_0,\omega_0)}\right)^{\frac{1}{2}}n_5^s,\frac{10M_0^s}{L^s(E_0,\omega_0)}n_5^s,n_7^s\right),
  \end{equation}
  where $n_2^s$ is defined in (\ref{n2s}), $n_4^s$ in Lemma \ref{disomega}, $n_5^s$ in Remark \ref{schlar} and $n_7^s$ in (\ref{n7}). Then, we redefine  $r_E^s$ and $r_{\omega}^s$ as follows:
  \[r^s_E=\frac{L^s(E_0,\omega_0)}{200\check n_{s}}\exp(-5M_0^s\check n_{s}),\ r^s_{\omega}=\frac{L^s(E_0,\omega_0)}{400\max_{\mathbb{T}}(v'(x))\check n_{s}^2}\exp(-5M_0^s\check n_{s}).\]
  Note that $5M_0^s>(10^{-5}\bar c_v+4)L^s(E_0,\omega_0)$.
  Thus, for any $E_1,E_2\in [E_0-r_E^s,E_0+r_E^s]$, $\omega_1,\omega_2\in[\omega_0-r_{\omega}^s,\omega_0+r_{\omega}^s]$ satisfying $\max(\beta(\omega_1),\beta(\omega_2))<\frac{1}{15}L^s(E_0,\omega_0)$,  there exist $n^s_E$ and $n^s_{\omega}$ such that
  \[|E_1-E_2|\sim \exp\left(-(10^{-5}\bar c_v+4)L^s(E_0,\omega_0)n_E\right),\ |\omega_1-\omega_2|\sim \exp\left(-(10^{-5}\bar c_v+4)L^s(E_0,\omega_0)n_{\omega}\right).\]
  Therefore,
  \begin{eqnarray*}
  |L^s(E_1,\omega_1)-L^s(E_1,\omega_2)|&\leq &
|L^s(E_1,\omega_1)+L^s_{n^s_{\omega}}(E_1,\omega_1)-2L^s_{2n^s_{\omega}}(E_1,\omega_1)|+|L^s(E_1,\omega_2)+L^s_{n^s_{\omega}}(E_1,\omega_2)-2L^s_{2n^s_{\omega}}(E_1,\omega_2)|\\
&\ &\ \ +|L^s_{n^s_{\omega}}(E_1,\omega_1)-L^s_{n^s_{\omega}}(E_1,\omega_2)|+2|L^s_{2n^s_{\omega}}(E_1,\omega_1)-L^s_{2n^s_{\omega}}(E_1,\omega_2)|\nonumber\\
&< &2\exp(-10^{-5}\bar c_vL(E_0,\omega_0)n^s_{\omega})+2\exp
\left(4L(E_0,\omega_0)n^s_{\omega}\right)|\omega_1-\omega_2|\nonumber\\
&< &4\exp(-10^{-5}\bar c_{v,a}L(E_0,\omega_0)n^s_{\omega})<|\omega_1-\omega_2|^{\tau_s},
\end{eqnarray*}
and similarly,
$$|L^s(E_1,\omega_2)-L^s(E_2,\omega_2)|<|E_1-E_2|^{\tau_s}.$$
\end{remark}
\begin{remark}If $\lambda_s>\max\left(\lambda_p^s\left(\frac{1}{50}\right),5V\right)$, then $L^s(E,\omega)$ is always positive for any $E\in\mathcal{E}_s$ and irrational $\omega$. Thus, we do not need to apply the LDT and the Avalanche Principle to obtain the interval where the Lyapunov exponent is positive. Furthermore, due to Remark \ref{schlar}, we have
 \[n_5^s=1,\ 80\left(\frac{2M_0^s}{L^s(E_0,\omega_0)}\right)^{\frac{1}{2}}n_5^s<160,\ \frac{10M_0^s}{L^s(E_0,\omega_0)}n_5^s<20,\ n_7^s<20.\]
Overall, the integers $n^s_{E}$ and $n^s_{\omega}$ only need to be larger than $160$. Therefore, (\ref{schlarholder}) holds for any $|E_1-E_2|<\lambda_s^{-800}$ and $|\omega_1-\omega_2|<\lambda_s^{-800}$ satisfying $\max\left(\beta(\omega_1),\beta(\omega_2)\right)<\frac{c_s}{16}\log\lambda_s$.
\end{remark}

\end{document}